\pgfplotsset{compat=1.18}
\title{The Influence of an Adjoint Mismatch on the Primal-Dual Douglas-Rachford Method}
\author{Emanuele Naldi\thanks{Dipartimento di Matematica, University of Genova, Via Dodecaneso 35, 16146 Genova, Italy, \email{emanuele.naldi@edu.unige.it}} \and Felix Schneppe\thanks{Center for Industrial Mathematics, Fachbereich 3, University of Bremen, Postfach 33 04 40, 28334 Bremen, Germany, \email{schneppe@uni-bremen.de}}}
\begin{document}

\maketitle

\begin{abstract}
    The primal-dual Douglas-Rachford method is a well-known algorithm to solve optimization problems written as convex-concave saddle-point problems. Each iteration involves solving a linear system involving a linear operator and its adjoint. However, in practical applications it is often computationally favorable to replace the adjoint operator by a computationally more efficient approximation. This leads to an adjoint mismatch. In this paper, we analyze the convergence of the primal-dual Douglas-Rachford method under the presence of an adjoint mismatch. We provide mild conditions that guarantee the existence of a fixed point and find an upper bound on the error of the primal solution. Furthermore, we establish step sizes in the strongly convex setting that guarantee linear convergence under mild conditions. Additionally, we provide an alternative method that can also be derived from the Douglas-Rachford method and is also guaranteed to converge in this setting. Finally, we illustrate our results both for an academic and a real-world inspired example.
\end{abstract}

\section{Motivation}
In many practical applications we try to find a solution to the minimization problem
\begin{equation}\label{eq:minFA+G}
    \min_{x \in X}\, F(Ax)+G(x),
\end{equation}
on a Hilbert space $X$, where $F:Y\to \overline{\RR}$, $G:Y\to \overline{\RR}$ are proper, convex, lower-semicontinuous functions, $Y$ is another Hilbert space and $A: X \to Y$ is a linear and bounded operator. Since problems of this kind are often hard to solve, it can be beneficial to examine the equivalent saddle point problem
\begin{equation}\label{eq:saddlepoint}
    \min_{x \in X}\, \max_{y \in Y} G(x) + \scp{Ax}{y} - F^*(y)
\end{equation}
with the Fenchel conjugate $F^*: Y \to \overline{\RR}$ of $F$. Finding a solution of problem \eqref{eq:saddlepoint} is equivalent to finding a solution to the monotone inclusion~\cite[Th. 19.1]{Bauschke}
\begin{equation}\label{eq:monotone-inclusion}
    0 \in \begin{pmatrix} \partial G & A^* \\ -A & \partial F^* \end{pmatrix}
\end{equation}
In order to find a solution, O'Connor and Vandenberghe proposed in \cite{pddr} to use the Douglas-Rachford method~\cite{Douglas1956, lionsmercier} and apply it to the decomposition
\begin{equation}\label{eq:pddr-splitting}
    0 \in \underbrace{\begin{pmatrix} \partial G & 0 \\ 0 & \partial F^* \end{pmatrix}}_{\cA} \begin{pmatrix} \hat{x} \\ \hat{y} \end{pmatrix} + \underbrace{\begin{pmatrix} 0 & A^* \\ -A & 0 \end{pmatrix}}_{\cB} \begin{pmatrix} \hat{x} \\ \hat{y} \end{pmatrix}.
\end{equation}
With fixed relaxation parameter $\theta \in (0,2)$ and step size $\tau > 0$, this leads to the iteration
\begin{equation}
\label{alg-part:pddr}
\begin{split}
    x^{k+1} &= \prox_{\tau G}\left(p^k\right), \\
    y^{k+1} &= \prox_{\tau F^*}\left(q^k\right), \\
   \begin{bmatrix} v^{k+1} \\ w^{k+1} \end{bmatrix} &= \begin{bmatrix} I & \tau A^* \\ - \tau A & I\end{bmatrix}^{-1} \begin{bmatrix} 2x^{k+1}-p^k \\ 2y^{k+1}-q^k \end{bmatrix}, \\
    p^{k+1} &= p^{k} + \theta \left(v^{k+1} - x^{k+1} \right), \\
    q^{k+1} &= q^{k} + \theta \left(w^{k+1} - y^{k+1} \right).
\end{split}
\end{equation}

However, in practical applications it might happen that the operator and its adjoint are given as two separate implementations of discretizations of a continuous operator and its adjoint, respectively. If the implementations use the ``first dualize, then discretize'' approach, the discretizations might not be adjoint to each other anymore. Sometimes, this is even done on purpose to save computational time or to impose certain structure for the image of the adjoint operator \cite{Buffiere2010InSE, Riddell1995TheAI,zeng2000unmatched}. One example of such an adjoint mismatch is for example provided by MATLAB's \texttt{radon} and \texttt{iradon} functions, with the first function being a discretization of the Radon transform, while the later is a discretization of the adjoint~\cite[Sec. 9.3.5]{CT}. However, both functions are not adjoint to each other and the correct adjoint of \texttt{radon} is not implemented.
In this work we will denote with $A$ the discretization of the forward operator and with $V^{*}$ the discretization of the adjoint and assume that $A^{*}\neq V^{*}$ in general.
The influence of such a mismatch has been studied for the Chambolle-Pock method~\cite{cp_mismatched} and various other algorithms as well \cite{Savanier2021ProximalGA,Chouzenout2021pgm-adjoint,zeng2000unmatched,Lorenz2018TheRK, Dong2019FixingNO, Elfving2018UnmatchedPP, Chouzenoux2023ConvergenceRF}.

If we do not adapt to this adjoint mismatch, the iteration becomes the primal-dual Douglas-Rachford method with mismatched adjoint
\begin{equation}
\label{alg-part:pddr_mismatch}
\begin{split}
    x^{k+1} &= \prox_{\tau G}\left(p^k\right), \\
    y^{k+1} &= \prox_{\tau F^*}\left(q^k\right), \\
   \begin{bmatrix} v^{k+1} \\ w^{k+1} \end{bmatrix} &= \begin{bmatrix} I & \tau V^* \\ - \tau A & I\end{bmatrix}^{-1} \begin{bmatrix} 2x^{k+1}-p^k \\ 2y^{k+1}-q^k \end{bmatrix}, \\
    p^{k+1} &= p^{k} + \theta \left(v^{k+1} - x^{k+1} \right), \\
    q^{k+1} &= q^{k} + \theta \left(w^{k+1} - y^{k+1} \right).
\end{split}
\end{equation}
To get a first impression of its convergence properties, we consider two examples.
\begin{example}
    We consider the minimization problem
    $$\min_{x \in \RR^{128 \times 128}} \tfrac{1}{2}\norm{\cR[x]-b}^2 + \tfrac{0.01}{2} \norm{x}^2,$$
    which has a solution $\hat x$ corresponding to the measurements $b = \cR[x^*]$, where $x^*$ is the famous Shepp-Logan phantom and $\cR$ is the Radon transform. We define $F(y) = \tfrac{1}{2} \norm{y-b}^2$ and $G(x) = \tfrac{0.01}{2} \norm{x}^2$. Accordingly, the Fenchel conjugate of $F^*$ is given by
    $$F^*(y)  = \frac12 \norm{y}^2 + \scp{y}{b}.$$
    The corresponding proximal operators are given by
    \begin{align*}
        \prox_{\tau G}(x) &= \tfrac{x}{1+0.01 \tau}, \\
        \prox_{\tau F^*}(y) &= \tfrac{y-\tau b}{1+\tau}.
    \end{align*}
    We use MATLAB’s implementation of the Radon transform, {\upshape\ttfamily radon}, for the linear operator $A$ and {\upshape\ttfamily iradon} without a filter for the backprojection operator $V^*$ .

    We choose the step size $\tau$ sufficiently small so that a solution to the linear system exists (e.g., $\tau = 0.0008$) and set $\theta = 0.9$. The convergence of the method can be seen in Figure~\ref{fig:pddr-intro-ct}.
    
    \begin{figure}[H]
        \centering
        \includegraphics[width=0.8\linewidth]{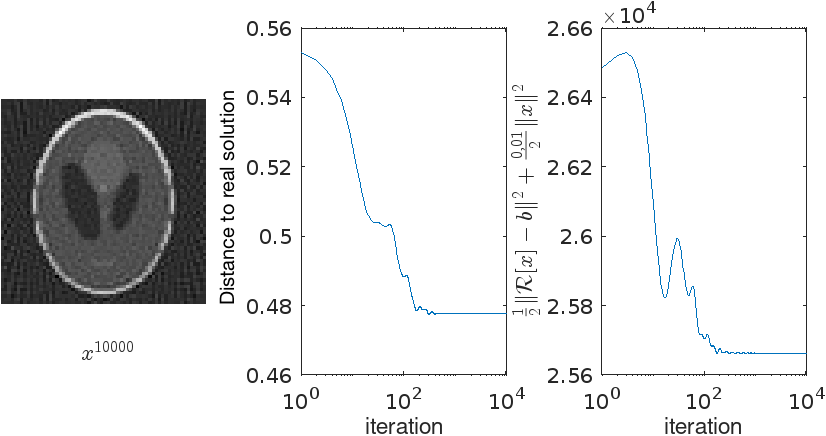}
        \caption{Left: The result after $10,000$ iterations. Middle: The distance between the iterates and the true solution $x^*$. Right: The objective function value over iterations.}
        \label{fig:pddr-intro-ct}
    \end{figure}
The method appears to converge despite the mismatched adjoint, but not to the true solution of the minimization problem.
\end{example}

While the previous example shows a convergent instance of the algorithm, there also exist counterexamples that demonstrate the possible divergence of the method.

\begin{example}\label{pddr:counter-example}  
    We consider the problem $\min_{x \in \RR^d} \norm{x}_1$ and model it using $A = I$, $F(y) = \norm{y}_1$, and $G \equiv 0$. Clearly, the true solution is $x^* = 0$. We examine the case where $V^* = - \alpha I$ for some $\alpha > 0$ as a replacement for $A^* = I$. For sufficiently small $\tau$, the iteration of the primal-dual Douglas-Rachford method with mismatched adjoint for this problem is given by:  
    \begin{align*}  
        x^{k+1} &= (1-\theta) x^{k} + \theta v^{k}, \\  
        y^{k+1} &= \proj_{[-1,1]^d}\left(q^k\right), \\  
        v^{k+1} &= \tfrac{1}{1-\alpha \tau^2} \left(x^{k+1} + \alpha \tau (2 y^{k+1}-q^k)\right), \\  
        w^{k+1} &= \tfrac{1}{1-\alpha \tau^2} \left( \tau x^{k+1} + 2y^{k+1}-q^k\right), \\  
        q^{k+1} &= q^{k} + \theta \left(w^{k+1}-y^{k+1}\right).  
    \end{align*}  
    We set $\alpha = 0.01$, $\tau = 0.1$, and $\theta = 1$. Furthermore, we set the dimension $d = 10$ and choose random standard normally distributed initial vectors. The results are shown in Figure~\ref{fig:pddr-intro-2}.  
    \begin{figure}[H]  
    \centering  
    \begin{tikzpicture}  
        \node at (0, 0) {\includegraphics[width=5cm]{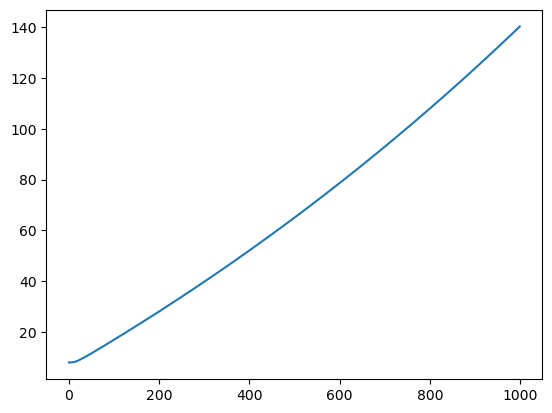}};  
        \node[below] at (0, -2.1) {iteration}; 
        \node[rotate=90] at (-2.8, 0) {$\norm{x}_1$}; 
        \node at (6, 0) {\includegraphics[width=5cm]{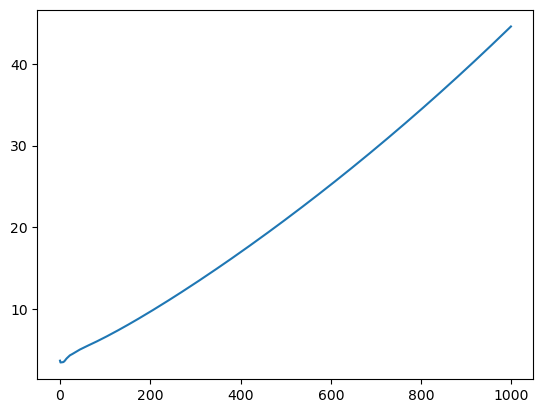}};  
        \node[below] at (6, -2.1) {iteration}; 
        \node[rotate=90] at (3.2, 0) {$\norm{x-x^*}$}; 
    \end{tikzpicture}  
    \caption{Left: Value of the objective function over the iterations. Right: Distance between the iterates and the true solution $x^*$.}  
    \label{fig:pddr-intro-2}  
    \end{figure}  
    It follows that, in this example, the sequence of primal variables $(x^k)_{k \in \NN}$ and thus also $\left(\norm{x^k}_1\right)_{k \in \NN}$ diverges, despite the existence of a fixed point $(\hat x, \hat y, \hat v, \hat w, \hat q) = (0,0,0,0,0)$ for the iteration.  
\end{example} 
These examples show clearly that a more detailed investigation of the convergence properties of algorithm \eqref{alg-part:pddr_mismatch} may be of interest.

The rest of the paper is structured as follows. In Section~\ref{sec:fixedpointproperties} we study the fixed point properties of the method. We show that under mild conditions a fixed point still exists, its unique, and determine an upper bound on the difference between the primal solutions of the primal-dual Douglas-Rachford method with and without an adjoint mismatch. Building on the observations in that section, we introduce an adapted primal-dual Douglas-Rachford method with mismatched adjoint in Section~\ref{sec:adapted-pddrmm}. In Section~\ref{sec:pddrmm} we reformulate the primal-dual Douglas-Rachford method with mismatched adjoint as a preconditioned proximal point algorithm and provide step sizes and mild conditions that lead to the linear convergence of the method. The performance of the algorithm is then demonstrated in Section~\ref{sec:numerical-experiments} on numerical experiments, while the Section~\ref{sec:conclusion} concludes the paper.

\paragraph{Notation}
Throughout this paper, $G: X \to \overline{\RR}$ and $F^*: Y \to \overline{\RR}$ are proper, convex, lower semi-continuous functionals, where $X$ and $Y$ are Hilbert spaces and $\overline{\RR} \defeq \RR \cup \{\infty\}$. With $\norm{\cdot}$ we denote the Hilbert space norm induced by the inner product. Furthermore $A, V \in \mathcal{L}(X,Y)$ are bounded linear operators and $V^*$ is the adjoint of $V$. We will denote the resolvent of a set-valued map $\cA:X\multito X$ as $J_{\cA}:=(I+A)^{-1}$. If $\cA = \partial F$ is the subdifferential of a proper, convex, lower semi-continuous function $F: X \to \overline{\RR}$, we write $J_{\partial F} := \prox_{F}$. Moreover, with $\proj_C$ we denote the orthogonal projection onto a convex set $C$.   Additionally, for set-valued mappings $\cA: X \multito Z$ and $\cB: Y \multito Z$, $x \in X$ and $y \in Y$ and the binary relation $\operatorname{R}$ we write
    $$\cA(x) \operatorname{R} \cB(y) :\Leftrightarrow \forall u \in \cA(x), v \in \cB(y): u \operatorname{R} v.$$

\section{Fixed Point Properties}
\label{sec:fixedpointproperties}
\subsection{Existence and Uniqueness}
While the existence of fixed points of the primal-dual Douglas-Rachford method is guaranteed by existence of a solution to the saddle-point problem~\eqref{eq:saddlepoint}, this is totally different for the method with a mismatched adjoint. Therefore, we begin by characterizing the fixed points.
\begin{lemma}\label{lemma:fixedpoints-pddr}
    A point $(\hat{x}, \hat{y}, \hat{v}, \hat{w}, \hat{p}, \hat{q})$ is a fixed point of the primal-dual Douglas-Rachford method with mismatched adjoint with $\tau \in \left(0,\frac{1}{\norm{A-V}}\right)$ and $\theta \in \RR\setminus \{0\}$ if and only if
    \begin{equation*}
        \hat{v} = \hat{x},\; \hat{w} = \hat{y},\; \hat{p} \in (I+\tau \partial G)\hat{x} \;\text{ and }\; \hat{q} \in (I+\sigma \partial F^*)\hat{y}
    \end{equation*}
    and
    \begin{equation}\label{eq:mismatchedinclusion}
    0 \in  
\begin{pmatrix}
    \partial G & V^* \\ - A & \partial F^*
\end{pmatrix} \begin{pmatrix}
    \hat{x} \\ \hat{y}
\end{pmatrix}.
\end{equation}
\end{lemma}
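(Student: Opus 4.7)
The plan is to treat each of the five update equations in \eqref{alg-part:pddr_mismatch} as an equivalence at a fixed point and read off the stated conditions one by one; most of the work is bookkeeping, with the only slightly delicate point being invertibility of the linear system. First I would exploit the two relaxation updates: $\hat p = \hat p + \theta(\hat v - \hat x)$ and $\hat q = \hat q + \theta(\hat w - \hat y)$, combined with $\theta \neq 0$, immediately force $\hat v = \hat x$ and $\hat w = \hat y$. These two identities are the structural consequence of the relaxation step and will be inserted into the remaining equations.

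Next I would translate the two proximal updates via the resolvent characterization of the proximal operator: $\hat x = \prox_{\tau G}(\hat p)$ is equivalent to $\hat p \in (I + \tau \partial G)\hat x$, i.e.\ $\hat p - \hat x \in \tau \partial G(\hat x)$, and analogously $\hat q - \hat y \in \tau \partial F^*(\hat y)$ (I read the $\sigma$ appearing in the statement as a typo for $\tau$). For the equation involving the matrix $M := \left[\begin{smallmatrix} I & \tau V^* \\ -\tau A & I\end{smallmatrix}\right]$, I would first argue that $M$ is invertible whenever $\tau \norm{A-V} < 1$. The argument I have in mind is to split $M = (I + \tau K_s) + \tau K_p$ with $K_s = \left[\begin{smallmatrix} 0 & A^* \\ -A & 0\end{smallmatrix}\right]$ skew-adjoint and $K_p = \left[\begin{smallmatrix} 0 & (V-A)^* \\ 0 & 0\end{smallmatrix}\right]$ of norm $\norm{V-A}$; since $I + \tau K_s$ is invertible with inverse of operator norm at most $1$, a Neumann-series perturbation yields invertibility of $M$ under the stated bound on $\tau$.

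With $M$ invertible, the third equation at a fixed point is equivalent to $\hat v + \tau V^* \hat w = 2\hat x - \hat p$ and $-\tau A \hat v + \hat w = 2\hat y - \hat q$; substituting $\hat v = \hat x$ and $\hat w = \hat y$ gives $\hat p - \hat x = -\tau V^* \hat y$ and $\hat q - \hat y = \tau A \hat x$. Combining with the subdifferential inclusions from the previous step yields $-V^*\hat y \in \partial G(\hat x)$ and $A\hat x \in \partial F^*(\hat y)$, which is exactly \eqref{eq:mismatchedinclusion}. Since every step is reversible, the converse direction is obtained by starting from $(\hat x,\hat y)$ satisfying \eqref{eq:mismatchedinclusion}, defining $\hat p := \hat x - \tau V^*\hat y$, $\hat q := \hat y + \tau A\hat x$, $\hat v := \hat x$, $\hat w := \hat y$, and verifying the five update equations directly. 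The only real point of care in this whole argument is the invertibility of $M$; once this is pinned down by the hypothesis $\tau \in (0, 1/\norm{A-V})$, everything else is straightforward algebra.
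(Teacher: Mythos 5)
Your proposal is correct and follows the same overall route as the paper: unwind the two relaxation updates to get $\hat v=\hat x$, $\hat w=\hat y$, translate the proximal steps into the resolvent inclusions $\hat p-\hat x\in\tau\partial G(\hat x)$, $\hat q-\hat y\in\tau\partial F^*(\hat y)$ (and yes, the $\sigma$ in the statement is a typo for $\tau$), and then substitute into the linear system to read off \eqref{eq:mismatchedinclusion}. The one place where you genuinely diverge is the invertibility of $\bigl[\begin{smallmatrix} I & \tau V^* \\ -\tau A & I\end{smallmatrix}\bigr]$, and there your argument is actually the stronger one. The paper asserts that $\norm{\cB}=\norm{A-V}$ for $\cB=\bigl[\begin{smallmatrix}0 & V^*\\ -A & 0\end{smallmatrix}\bigr]$ and deduces injectivity of $I+\tau\cB$ from $\tau<1/\norm{\cB}$; but in fact $\norm{\cB}=\max\{\norm{A},\norm{V}\}$ (take $y=0$ or $x=0$), which does not match $\norm{A-V}$ in general and would give a far more restrictive threshold. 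Your decomposition $I+\tau\cB=(I+\tau K_s)+\tau K_p$ with $K_s$ skew-adjoint (so $\norm{(I+\tau K_s)^{-1}}\leq 1$) and $\norm{K_p}=\norm{V-A}$, followed by a Neumann-series perturbation, is precisely what justifies invertibility under the stated hypothesis $\tau\in(0,1/\norm{A-V})$; it is the argument the paper's lemma actually needs. Your explicit construction of $\hat p=\hat x-\tau V^*\hat y$, $\hat q=\hat y+\tau A\hat x$ for the converse is also slightly more careful than the paper's chain of equivalences, since it makes clear which element of $(I+\tau\partial G)\hat x$ the fixed point must select.
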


\begin{proof}
    From the last two lines of the iteration rule, we directly obtain
    \begin{align*}
        \hat p = \hat p + \theta (\hat v - \hat x) &\iff \hat x = \hat v, \\
        \hat q = \hat q + \theta (\hat w - \hat y) &\iff \hat y = \hat w.
    \end{align*}
    Furthermore, by \cite[Prop. 16.44]{Bauschke}, we have
    \begin{align*}
        \hat{x} = \prox_{\tau G}(\hat p) &\iff \hat p - \hat x \in \tau \partial G(\hat x), \\
        \hat{y} = \prox_{\tau F^*}(\hat q) &\iff \hat q - \hat y \in \tau \partial F^*(\hat y).
    \end{align*}
    Now, considering the linear system, we have
    \begin{equation}\label{eq:linear_system}
        \begin{bmatrix} \hat v \\ \hat w \end{bmatrix} = \begin{bmatrix} I & \tau V^* \\ -\tau A & I \end{bmatrix}^{-1} \begin{bmatrix} 2 \hat x - \hat p \\ 2 \hat y - \hat q \end{bmatrix}.
    \end{equation}
    First we discuss if there exists a unique solution to the linear system. With the choice of $$\cB = \begin{bmatrix}
        0 & V^* \\ -A & 0
    \end{bmatrix},$$
     we observe $(I+\tau \cB)$ as a bounded linear operator on $X \times Y$. If $\tau < \tfrac{1}{\norm{\cB}} = \tfrac{1}{\norm{A-V}} $, then it is also injective and, by~\cite[Theorem VI.I.3]{werner_funkana}, surjective. Hence, $(I+\tau \cB)^{-1}$ exists and has full effective domain.
    Furthermore $(\hat v, \hat w)^T$ is a solution of the linear system \eqref{eq:linear_system}
    if and only if
    \begin{equation*}
        \hat v + \tau V^* \hat w = 2 \hat x - \hat p \; \land  \; \hat w - \tau A \hat v = 2 \hat y - \hat q.
    \end{equation*}
    Combining this with the previous observations leads to
    \begin{align*}
        & \hat v + \tau V^* \hat w = 2 \hat x - \hat p \\
        \iff \quad & \hat x + \tau V^* \hat y = 2 \hat x - \hat p \\
        \iff \quad & \tau V^* \hat y = \hat{x} - \hat{p} \in - \tau \partial G(\hat x) \\
        \iff \quad & 0 \in \partial G(\hat x) + V^* \hat y
    \end{align*}
    and
    \begin{align*}
        & \hat w - \tau A \hat v = 2 \hat y - \hat q \\
        \iff \quad & \hat y - \tau A \hat x = 2 \hat y - \hat q \\
        \iff \quad & - \tau A \hat x = \hat{y} - \hat{q} \in - \tau \partial F^*(\hat y) \\
        \iff \quad & 0 \in \partial F^*(\hat y) - A \hat x.
    \end{align*}
    Combining these conditions with
    \begin{align*}
        \hat{x} = \prox_{\tau G}(\hat p) = (I+\tau \partial G)^{-1}(\hat p) &\iff \hat{p} \in (I+\tau \partial G)(\hat x), \\
        \hat{y} = \prox_{\tau F^*}(\hat q) = (I+\tau \partial F^*)^{-1}(\hat q) &\iff \hat{q} \in (I+\tau \partial F^*)(\hat y),
    \end{align*}
    we obtain the desired result.
\end{proof}
Therefore the primal-dual Douglas-Rachford method with mismatched adjoint and the Chambolle-Pock method with mismatched adjoint~\cite{cp_mismatched} solve the same problem. While this result characterizes the fixed points of the method, we have not yet established whether such a solution exists, meaning that the existence of fixed points remains unclear. To address this issue, we will provide conditions that ensure their existence. We will utilize the following result.

\begin{proposition}[{\cite[Prop.~12.54]{rockafellar_variationalanalysis}}]\label{prop:rockafellar-existence-fixed-points}
  If a maximal monotone operator $\cA: X \multito X$ is strongly monotone, then there exists exactly one point $\hat x \in X$ such that $0 \in \cA \hat x$. In fact, $\cA^{-1}x$ is single-valued for all $x \in X$.
\end{proposition}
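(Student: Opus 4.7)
The plan is to dispatch uniqueness and single-valuedness directly from strong monotonicity, and to get existence by turning the resolvent $J_\cA=(I+\cA)^{-1}$ into a strict contraction on $X$ and invoking Banach's fixed point theorem. The only deep ingredient needed is Minty's theorem, which guarantees that for a maximal monotone operator $I+\cA$ is surjective; this is where maximality enters in an essential way.

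I would first take care of single-valuedness of $\cA^{-1}$. If $y_1,y_2\in\cA^{-1}(x)$, then $x\in\cA y_i$, and testing the $\mu$-strong monotonicity of $\cA$ with these two points gives $0=\scp{x-x}{y_1-y_2}\geq\mu\norm{y_1-y_2}^2$, forcing $y_1=y_2$. Specializing to $x=0$ yields uniqueness of any zero of $\cA$, so only existence of such a zero remains after this step.

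For existence I would invoke Minty's theorem, so that $J_\cA$ is single-valued and globally defined. Next I would upgrade the usual firm nonexpansivity of $J_\cA$ to a strict contraction using strong monotonicity: writing $y_i=J_\cA(x_i)$ so that $x_i-y_i\in\cA y_i$, strong monotonicity gives
$$\scp{(x_1-x_2)-(y_1-y_2)}{y_1-y_2}\geq\mu\norm{y_1-y_2}^2,$$
which after rearrangement and Cauchy-Schwarz produces $\norm{y_1-y_2}\leq\tfrac{1}{1+\mu}\norm{x_1-x_2}$. The Banach contraction mapping principle then supplies a unique fixed point $\hat x=J_\cA(\hat x)$, and by definition of the resolvent this is exactly the inclusion $0\in\cA\hat x$.

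The main obstacle is the reliance on Minty's theorem: strong monotonicity alone only yields injectivity of $\cA$ and the Lipschitz estimate for the resolvent, but says nothing about whether $I+\cA$ hits every point in $X$, and the contraction argument cannot even start before this is known. Once $J_\cA$ is known to have full domain, the remaining manipulations are elementary Hilbert-space calculations and no further subtleties appear.
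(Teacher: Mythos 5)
Your proof is correct. Note that the paper does not prove this statement at all: it is imported verbatim as \cite[Prop.~12.54]{rockafellar_variationalanalysis} and used as a black box in the proof of Theorem~\ref{thm:existenceoffixedpoints}, so there is no internal argument to compare against. Your route is the standard self-contained one: uniqueness and single-valuedness of $\cA^{-1}$ fall out of testing $\mu$-strong monotonicity on two preimages of the same point, and existence of the zero comes from Minty's theorem (maximality giving $\operatorname{ran}(I+\cA)=X$) followed by the sharpening of the resolvent's nonexpansivity to a $\tfrac{1}{1+\mu}$-contraction and Banach's fixed point theorem on the complete space $X$. The contraction estimate is right: from $x_i-y_i\in\cA y_i$ you get $\scp{x_1-x_2}{y_1-y_2}\geq(1+\mu)\norm{y_1-y_2}^2$ and Cauchy--Schwarz finishes it, and the fixed point $\hat x=J_{\cA}(\hat x)$ unwinds to $0\in\cA\hat x$. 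You correctly flag Minty's theorem as the one place where maximality is indispensable; strong monotonicity alone gives injectivity and the Lipschitz bound on $\cA^{-1}$ but not surjectivity of $I+\cA$. The only cosmetic remark is that Rockafellar's version also asserts that $\cA^{-1}$ has full domain (i.e.\ $\cA$ is surjective); if one wants that stronger reading, your existence argument applies unchanged to the shifted operator $y\mapsto\cA y-x$ for each $x\in X$, which remains maximal and strongly monotone.
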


Applying this proposition to $\cA$ in the zero inclusion~\eqref{eq:mismatchedinclusion} and identifying conditions for strong monotonicity guarantees the existence of unique fixed points. The following theorem provides a suitable result:

\begin{theorem}\label{thm:existenceoffixedpoints}
    Let $G: X \to \overline \RR$ and $F^{*}: Y \to \overline \RR$ be proper, $\gamma_G$- and $\gamma_{F^*}$-strongly convex, lower semicontinuous functions satisfying
    \begin{equation}\label{eq:conditionexistencefixedpoints}
        \gamma_G \gamma_{F^*} > \frac14 \norm{A-V}^2.
    \end{equation}
    Then, there exists a unique solution $\hat{u} \in X \times Y$ satisfying the zero inclusion~\eqref{eq:mismatchedinclusion}.
\end{theorem}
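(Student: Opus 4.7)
The plan is to apply Proposition~\ref{prop:rockafellar-existence-fixed-points} to the set-valued operator
\[
\cA(x,y) \defeq \begin{pmatrix} \partial G(x) + V^* y \\ \partial F^*(y) - A x \end{pmatrix} = \partial H(x,y) + \cB(x,y),
\]
where $H(x,y) \defeq G(x)+F^*(y)$ and $\cB$ is the bounded linear operator from~\eqref{eq:pddr-splitting} (with $V^*$ instead of $A^*$). I would therefore need to verify that $\cA$ is both strongly monotone and maximally monotone, and then existence and uniqueness of the zero follow immediately.

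For strong monotonicity, I would fix $z_i=(x_i,y_i)$ and $u_i \in \cA(z_i)$, write $u_i = h_i + \cB z_i$ with $h_i \in \partial H(z_i)$, and expand
\[
\langle u_1-u_2, z_1-z_2\rangle \;\geq\; \gamma_G \|x_1-x_2\|^2 + \gamma_{F^*}\|y_1-y_2\|^2 + \langle (V-A)(x_1-x_2), y_1-y_2\rangle,
\]
using the strong convexity of $G$ and $F^*$ and the identity $\langle V^*\Delta y,\Delta x\rangle - \langle A\Delta x,\Delta y\rangle = \langle(V-A)\Delta x,\Delta y\rangle$. Applying Cauchy--Schwarz to the cross term reduces the question to whether the quadratic form $\gamma_G a^2 - \|A-V\|\,ab + \gamma_{F^*}b^2$ dominates $c(a^2+b^2)$ for some $c>0$, where $a=\|x_1-x_2\|$ and $b=\|y_1-y_2\|$. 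By the Sylvester criterion, this holds iff $(\gamma_G-c)(\gamma_{F^*}-c) \geq \tfrac14 \|A-V\|^2$, and the strict hypothesis~\eqref{eq:conditionexistencefixedpoints} makes this achievable for all sufficiently small $c>0$.

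The main obstacle is maximal monotonicity, because $\cB$ itself is not monotone in general. My strategy is to introduce the self-adjoint positive operator $\Pi \defeq \diag(\gamma_G I,\gamma_{F^*}I)$ and split
\[
\cA \;=\; (\partial H - \Pi) \;+\; (\cB + \Pi).
\]
The function $H(x,y) - \tfrac{\gamma_G}{2}\|x\|^2 - \tfrac{\gamma_{F^*}}{2}\|y\|^2$ is proper, convex, and lower semicontinuous by the strong convexity assumption on $G$ and $F^*$, so its subdifferential $\partial H - \Pi$ is maximally monotone. For $\cB+\Pi$, I would compute
\[
\langle (\cB+\Pi)(x,y), (x,y)\rangle \;=\; \langle (V-A)x, y\rangle + \gamma_G\|x\|^2 + \gamma_{F^*}\|y\|^2,
\]
and use the AM--GM estimate $\gamma_G\|x\|^2 + \gamma_{F^*}\|y\|^2 \geq 2\sqrt{\gamma_G\gamma_{F^*}}\|x\|\|y\|$ together with~\eqref{eq:conditionexistencefixedpoints} to conclude that this quantity is nonnegative; since $\cB+\Pi$ is linear with full domain, nonnegativity of the quadratic form means $\cB+\Pi$ is monotone, and being bounded linear it is Lipschitz continuous on all of $X\times Y$, hence maximally monotone. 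Finally, the standard sum theorem for a maximally monotone operator plus a monotone, Lipschitz, everywhere-defined single-valued operator (e.g.~\cite[Cor.~25.5]{Bauschke}) yields maximal monotonicity of $\cA$.

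With both properties in hand, Proposition~\ref{prop:rockafellar-existence-fixed-points} gives the unique $\hat u = (\hat x,\hat y) \in X\times Y$ with $0 \in \cA\hat u$, which is precisely~\eqref{eq:mismatchedinclusion}.
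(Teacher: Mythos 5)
Your proposal is correct and follows essentially the same route as the paper: both establish strong monotonicity of the full operator via the strong convexity of $G$ and $F^*$ plus a Cauchy--Schwarz bound on the cross term (your Sylvester-criterion argument is equivalent to the paper's Young's-inequality-with-$\epsilon$ optimization), and then invoke Proposition~\ref{prop:rockafellar-existence-fixed-points}. The one place you add substance is maximal monotonicity, which the paper dispatches by citation; your explicit splitting $\cA = (\partial H - \Pi) + (\cB + \Pi)$ with $\Pi = \operatorname{diag}(\gamma_G I, \gamma_{F^*} I)$ is a valid and more self-contained justification, and it mirrors the decomposition the paper itself uses later in Section~\ref{sec:adapted-pddrmm}.
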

\begin{proof}
    By \cite[Th. 20.25 + Th. 21.2]{Bauschke} and~\cite[Cor. 12.44]{rockafellar_variationalanalysis}, the operator $\cA$ is maximally monotone. According to Proposition~\ref{prop:rockafellar-existence-fixed-points}, we need to show that $\cA$ is strongly monotone. For this, we use \cite[Ex. 22.4]{Bauschke}, which relates the strong convexity of a function to the strong monotonicity of its subgradient. 

    Let
    $$u = \begin{pmatrix} x \\ y \end{pmatrix} \in X \times Y \quad\text{and}\quad u^{\prime} = \begin{pmatrix} x^{\prime} \\ y^{\prime} \end{pmatrix} \in X \times Y.$$
    Then, we have
    \begin{align*}
        \scp{u-u^{\prime}}{\cA u - \cA u^{\prime}} &= \scp{x-x^{\prime}}{\partial G(x) - \partial G(x^{\prime})} - \scp{x-x^{\prime}}{(A-V)(y-y^{\prime})} \\ 
        &\qquad + \scp{y-y^{\prime}}{\partial F^*(y) - \partial F^*(y^{\prime})} \\
        &\geq \gamma_G \norm{x-x^{\prime}}^2 - \scp{x-x^{\prime}}{(A-V)(y-y^{\prime})} + \gamma_{F^*} \norm{y-y^{\prime}}^2.
    \end{align*}
    Using the Cauchy-Schwarz inequality and Young's inequality, we obtain
    \begin{align*}
        \scp{x-x^{\prime}}{(A-V)(y-y^{\prime})} &\leq \norm{A-V} \norm{x-x^{\prime}} \norm{y-y^{\prime}} \\
        &\leq \norm{A-V} \left[\frac{\epsilon}{2} \norm{x-x^{\prime}}^2 + \frac{1}{2 \epsilon} \norm{y-y^{\prime}}^2 \right],
    \end{align*}
    for all $\epsilon > 0$. Substituting this into the previous inequality, we obtain
    \begin{align*}
        \scp{u-u^{\prime}}{\cA u - \cA u^{\prime}} \geq &\left(\gamma_G - \frac{\epsilon}{2} \norm{A-V} \right) \norm{x-x^{\prime}}^2 \\ 
        &\quad +  \left(\gamma_{F^*} - \frac{1}{2 \epsilon} \norm{A-V} \right) \norm{y-y^{\prime}}^2.
    \end{align*}
    The operator $\cA$ is strongly monotone if
    $$\gamma_G - \frac{\epsilon}{2} \norm{A-V} > 0 \quad\text{and}\quad \gamma_{F^*} - \frac{1}{2 \epsilon} \norm{A-V} > 0.$$
    The conditions simplify to
    $$\epsilon < \frac{2 \gamma_G}{\norm{A-V}}\quad \text{and}\quad \epsilon > \frac{1}{2 \gamma_{F^*}} \norm{A-V}.$$
    Such an $\epsilon$ exists if
    $$\frac{1}{2 \gamma_{F^*}} \norm{A-V} < \frac{2 \gamma_G}{\norm{A-V}},$$
    which simplifies to the condition
    $$\gamma_G \gamma_{F^*} > \frac14 \norm{A-V}^2.$$
    This completes the proof.
\end{proof}

To illustrate that the inequality for $A^* \neq V^*$ in condition~\eqref{eq:conditionexistencefixedpoints} must be strictly fulfilled, we consider the following simple example.

\begin{example}\label{ex:existencefixedpoints}
    We examine the saddle point problem
    $$\min_{x \in \RR} \max_{y \in \RR} \frac12 x^2 + (x-3)y - \frac12 y^2,$$
    which originates from the minimization problem
    $$\min_{x \in \RR} \left[\frac12 x^2 + \frac12 (x-3)^2 \right].$$
    We define $G: \RR \to \overline{\RR}, \; G(x) = \tfrac{x^2}{2},$ and $F^*: \RR \to \overline{\RR},\; F^*(y) = \tfrac{y^2}{2} + 3y,$ as well as $A = 1$ and $V^* = -1$.
    
    Both $G$ and $F^*$ are therefore $1$-strongly convex functions, and since $A-V = 2$, we obtain
    $$\gamma_G \gamma_{F^*} = 1 \boldsymbol{=} \frac14 \norm{A-V}^2.$$
    The inequality is thus not strictly satisfied; instead, we have equality. Let us now examine the zero inclusion~\eqref{eq:mismatchedinclusion}.
    
    In our example, with $u = (x,y)^T$, this results in
    \begin{equation*}
       0 \in \cA u = \begin{pmatrix}
            x - y \\ -x + y + 3
        \end{pmatrix} \iff x = y \; \land \; x = y + 3.
    \end{equation*}
    This is a contradiction, showing that $\cA^{-1}(0) = \emptyset$. \vspace{5pt}
    
    However, if we instead set $V^* = -\tfrac12$, then $\norm{A-V}^2 = \left(\tfrac{3}{2}\right)^2 = \tfrac{9}{4} < 4 \gamma_G \gamma_{F^*}$. Consequently, a fixed point exists, and indeed, for $\tilde \cA$, which differs from the previous operator only by replacing $V^*$ with $-\tfrac{1}{2}$, we obtain
    \begin{align*}
       0 \in \tilde \cA u = \begin{pmatrix}
            x - \tfrac{y}{2} \\ -x + y + 3
        \end{pmatrix} &\iff x = \frac{y}{2} \; \land \; x = y + 3 \\
        &\iff x = -3 \;\land \; y = -6.
    \end{align*}
    Thus, a solution exists, but it does not solve the original saddle point problem. The correct solution to the saddle point problem would instead be given by $x = \tfrac{3}{2}$ and $y = - \tfrac{3}{2}$.
\end{example}

Therefore, in general, the existence of solutions to the zero inclusion does not necessarily transfer to cases where a function is not strongly convex, nor to cases where equality holds between the product $\gamma_G \gamma_{F^*}$ and $\tfrac14 \norm{A-V}^2$. 

Before we begin to derive conditions in which the method does converge, we aim to compare the fixed points of the methods with mismatched adjoint to the fixed points of the original methods, i.e., the solutions of the original saddle point problem.

\subsection{Error Estimates}
\label{sec:fehlerabschaetzung}

While the previous section established that fixed points exist under certain conditions, we observed in Example~\ref{ex:existencefixedpoints} that these fixed points do not necessarily coincide with the actual solutions of the underlying saddle point problem. We now aim to analyze this discrepancy and establish an upper bound for it. Intuitively, this bound should depend on the difference between $A^*$ and $V^*$. In particular, the different solutions should be close to each other when the norm $\norm{A-V}$ is small. We will now demonstrate this. 

Starting from the zero inclusions that define the fixed points, we derive the following bound on the difference in the primal variable, which corresponds to an equivalent minimization problem.

\begin{theorem}\label{thm:error-estimate}
    Let $G: X \to \overline \RR$ be a $\gamma_G$-strongly convex function and let
    \begin{equation*}
         0 \in 
\begin{pmatrix}
    \partial G & A^* \\ - A & \partial F^*
\end{pmatrix} \begin{pmatrix}
    x^* \\ y^*
\end{pmatrix} \quad\text{and}\quad
 0 \in 
\begin{pmatrix}
    \partial G & V^* \\ - A & \partial F^*
\end{pmatrix} \begin{pmatrix}
    \hat{x} \\ \hat{y}
\end{pmatrix}.
    \end{equation*}
    Then the following estimate holds:
\[
  \norm{x^* - \hat{x}} \leq \frac{1}{\gamma_G} \norm{(V-A)^* \hat{y}}.
\]
\end{theorem}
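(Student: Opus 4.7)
The plan is to exploit the two zero-inclusion conditions to write subgradient relations for $G$ at $x^*$ and $\hat{x}$, then use strong monotonicity of $\partial G$ to turn $\|x^*-\hat{x}\|^2$ into an inner product that can be split into a "mismatch" contribution and a term that will vanish via monotonicity of $\partial F^*$.

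First, I would unpack the inclusions. They give us $-A^* y^* \in \partial G(x^*)$, $Ax^* \in \partial F^*(y^*)$, $-V^* \hat{y} \in \partial G(\hat{x})$, and $A\hat{x} \in \partial F^*(\hat{y})$. Since $G$ is $\gamma_G$-strongly convex, $\partial G$ is $\gamma_G$-strongly monotone (e.g.\ \cite[Ex.~22.4]{Bauschke}), hence
\[
\gamma_G \norm{x^*-\hat{x}}^2 \leq \scp{x^*-\hat{x}}{-A^* y^* + V^* \hat{y}}.
\]

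Next, I would add and subtract $A^*\hat{y}$ inside the right-hand side to isolate the mismatch:
\[
-A^* y^* + V^* \hat{y} = (V-A)^*\hat{y} + A^*(\hat{y}-y^*).
\]
The key observation is that the second piece is harmless: by $\partial F^*$-monotonicity applied to $Ax^* \in \partial F^*(y^*)$ and $A\hat{x}\in \partial F^*(\hat{y})$,
\[
\scp{\hat{y}-y^*}{A\hat{x}-Ax^*} \geq 0,
\]
which is the same as $\scp{x^*-\hat{x}}{A^*(\hat{y}-y^*)} \leq 0$. Substituting and applying Cauchy--Schwarz to the remaining mismatch term yields
\[
\gamma_G \norm{x^*-\hat{x}}^2 \leq \scp{x^*-\hat{x}}{(V-A)^*\hat{y}} \leq \norm{x^*-\hat{x}}\,\norm{(V-A)^*\hat{y}},
\]
and dividing by $\norm{x^*-\hat{x}}$ (trivial if zero) gives the claim.

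The whole argument is essentially a two-line monotonicity computation, so I do not anticipate a genuine obstacle. The only thing to be careful about is the algebraic split $V^*\hat{y}-A^*y^* = (V-A)^*\hat{y}+A^*(\hat{y}-y^*)$, which is what makes the $\partial F^*$ monotonicity term appear with the right sign; no assumption on $F^*$ beyond monotonicity of its subdifferential is needed, and no strong convexity of $F^*$ is required.
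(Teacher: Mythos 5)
Your argument is correct and complete: the subgradient identifications, the strong monotonicity of $\partial G$, the split $-A^*y^* + V^*\hat{y} = (V-A)^*\hat{y} + A^*(\hat{y}-y^*)$, and the sign of the cross term via monotonicity of $\partial F^*$ all check out. The paper itself does not spell this out but simply defers to \cite[Th.~1.2]{cp_mismatched}; your computation is the standard monotonicity argument underlying that cited result, so there is nothing to correct and no genuinely different route being taken.
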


\begin{proof}
See \cite[Th. 1.2]{cp_mismatched}.
\end{proof}

This bound is even sharp, as we will demonstrate with the following example.
\begin{example}
    We consider the saddle point problem
    $$\min_{x \in \RR} \max_{y \in \RR} \frac14 x^2  + xy - |y|$$
    with $G: \RR \to \overline{\RR}, \; G(x) = \tfrac{x^2}{4},$ and $F^*: \RR \to \overline{\RR},\; F^*(y) = |y|,$ as well as $A = 1$. 
    
    The solution to this problem is given by the pair $\left(x^*, y^*\right) = \left(0,0\right)$, and the primal-dual Douglas-Rachford method converges to this solution when the step sizes are chosen appropriately.
    
    If we now replace the adjoint of $A$ with $V^* = \tfrac12$, the fixed points are given by $\{\left(\hat x, - \hat x\right) \mid \hat x \in \{0,1,-1\}\}.$ In this example, we have $\gamma_G = \tfrac12$ and $A-V = \tfrac12$. Examining the difference in the first (primal) component of the fixed points, we obtain for $\left(\hat x,\hat y\right) = (0,0)$:
    $$0 = \norm{x^*-\hat x} = \frac{1}{\gamma_G} \norm{(A-V)^* \hat{y}} =  2 \left\|\frac12 \cdot 0\right\| = 0.$$
    Similarly, for the other fixed points $\left(\hat x,\hat y\right) \in \{(1,-1),(-1,1)\}$, we also have equality:
    $$1 = \norm{x^*-\hat{x}} =  \frac{1}{\gamma_G} \norm{(A-V)^* \hat{y}} = 1.$$
    \end{example}

In numerical experiments, the upper bound often provides a good approximation of the actual distance between the fixed points of \eqref{alg-part:pddr_mismatch} and the true solution. This is illustrated, for example, in Figure~\ref{fig:pddrmm-quadratic_linear_conv}.

\section{The Adapted Primal-Dual Douglas-Rachford Method with Mismatched Adjoint}  
\label{sec:adapted-pddrmm}  

While we want to observe the unchanged method except for the substitution of the adjoint, it is possible to construct an adapted version of the primal-dual Douglas-Rachford method with mismatched adjoint under a few mild assumptions. 

For the formulation of this adapted version, we return to the definition of the Douglas-Rachford method. Recall that this method is based on decomposing the monotone inclusion as  
\begin{equation*}
    0 \in \cA u + \cB u
\end{equation*}  
with maximally monotone operators \(\cA, \cB: X \multito X\), and seeks a solution \(u^*\). For the formulation of the primal-dual Douglas-Rachford method with mismatched adjoint, this corresponds to solving the decomposition
\begin{equation}\label{eq:pddrmm-splitting}
    0 \in \underbrace{\begin{pmatrix} \partial G & 0 \\ 0 & \partial F^* \end{pmatrix}}_{\cA} \begin{pmatrix} \hat{x} \\ \hat{y} \end{pmatrix} + \underbrace{\begin{pmatrix} 0 & V^* \\ -A & 0 \end{pmatrix}}_{\cB} \begin{pmatrix} \hat{x} \\ \hat{y} \end{pmatrix}.
\end{equation}
As already noted, \(\cB\) is not monotone when \(A^* \neq V^*\). However, in the convergent example~\ref{fig:pddr-intro-ct}, both \(G\) and \(F^*\) are strongly convex, making their subgradients strongly monotone. In this case, we can ``transfer'' some monotonicity of \(\cA\) to \(\cB\). Using the decomposition  
\[
0 \in \begin{pmatrix}
    \partial G & V^* \\ -A & \partial F^*
\end{pmatrix}
\begin{pmatrix}
    \hat x \\ \hat y
\end{pmatrix} = \underbrace{\begin{pmatrix}
    \partial G - \mu_G I & 0 \\ 0 & \partial F^* -\mu_{F^*} I
\end{pmatrix}}_{\Tilde \cA}
\begin{pmatrix}
    \hat x \\ \hat y
\end{pmatrix} + \underbrace{\begin{pmatrix}
     \mu_G I & V^* \\ -A & \mu_{F^*} I
\end{pmatrix}}_{\Tilde \cB}
\begin{pmatrix}
    \hat x \\ \hat y
\end{pmatrix}
\]
with \(0 < \mu_G \leq \gamma_G\) and \(0 < \mu_{F^*} \leq \gamma_{F^*}\), satisfying  
\[
\mu_G \mu_{F^*} \geq \frac14 \norm{A-V}^2,
\]
both \(\Tilde \cA\) and \(\Tilde \cB\) are maximally monotone. Consequently, the iteration changes to  
\begin{equation}\label{alg:adapted-pddr}
\begin{split}
    x^{k+1} &= \prox_{\tau \left(G-\tfrac{\mu_G}{2}\norm{\cdot}^2\right)}(p^k), \\
    y^{k+1} &= \prox_{\tau \left(F^*-\tfrac{\mu_{F^*}}{2}\norm{\cdot}^2\right)}(q^k), \\
    \begin{bmatrix} v^{k+1} \\ w^{k+1} \end{bmatrix} &= \begin{bmatrix}
        (1+\tau \mu_G) I & \tau V^* \\ -\tau A & (1+\tau \mu_{F^*}) I
    \end{bmatrix}^{-1} \begin{bmatrix}
        2x^{k+1} - p^k \\ 2y^{k+1}-q^k
    \end{bmatrix}, \\
    p^{k+1} &= p^k + \theta \left(v^{k+1} - x^{k+1}\right), \\
    q^{k+1} &= q^k + \theta \left(w^{k+1} - y^{k+1}\right).
\end{split}
\end{equation}
If \(\tau < \min\left\{ \tfrac{1}{\mu_G}, \tfrac{1}{\mu_{F^*}}\right\}\), the updates of \(x^{k+1}\) and \(y^{k+1}\) can be rewritten using 
\begin{align*}
    p = \prox_{\lambda \left(F - \tfrac{\mu}{2} \norm{\cdot}^2\right)}(x) &\iff\quad x-p \in \lambda \partial F(p) - \lambda \mu p \\
    &\overset{\text{\makebox[0pt]{if $\lambda < \mu^{-1}$}}}{\iff}\quad \tfrac{1}{1-\lambda \mu} x - p \in \tfrac{\lambda}{1-\lambda \mu} \partial F(p) \\
    &\iff\quad p = \prox_{\tfrac{\lambda}{1-\lambda \mu} F}\left(\tfrac{1}{1-\lambda \mu} x\right)
\end{align*}
for proper, \(\mu\)-strongly convex, lower semicontinuous functions \(F: X \to \overline{\RR}\) and positive \(\lambda\), through the proximal operators of \(G\) and \(F^*\) as  
\begin{align*}
    x^{k+1} &= \prox_{\tfrac{\tau}{1-\tau \mu_G} G}\left(\tfrac{p^k}{1-\tau \mu_G}\right), \\
    y^{k+1} &= \prox_{\tfrac{\tau}{1-\tau \mu_{F^*}} F^*}\left(\tfrac{q^k}{1-\tau \mu_{F^*}}\right).
\end{align*}
The convergence of the method then directly follow from those of the Douglas-Rachford method. The existence of a fixed point is guaranteed by Theorem~\ref{thm:existenceoffixedpoints}.

\section{The Primal-Dual Douglas-Rachford Method with an Adjoint Mismatch}
\label{sec:pddrmm}

We come back now the the unchanged method \eqref{alg-part:pddr_mismatch}. To prove the convergence of the primal-dual Douglas-Rachford method with mismatched adjoint, we represent it as a relaxed preconditioned proximal point method, similar to the Peaceman-Rachford method in~\cite{naldi}. Consequently, the proof is based on the techniques introduced in that paper. We structure this section as follows: first, we describe the zero inclusion differently and show that the corresponding relaxed preconditioned proximal point method is equivalent to the primal-dual Douglas-Rachford method with mismatched adjoint; and finally, we prove that it converges linearly under certain conditions.  

\subsection{Preconditioned Proximal Point Interpretation}  
We begin by establishing an interpretation of the primal-dual Douglas-Rachford method with mismatched adjoint as an equivalent relaxed preconditioned proximal point method. We use the formulation from~\cite[Eq. (3.7)]{naldi}. While this formulation was originally derived for monotone operators, monotonicity is not essential at this stage. To ensure its validity and given its significant role in the following proofs, we will nonetheless provide a proof of this formulation.  

\begin{lemma}  
   The condition $0 \in \mathcal{A} u + \mathcal{B} u$ holds if and only if there exists a vector $\textbf{u}$ whose first component is $u$ such that  
   \begin{equation}\label{eq:desplittedzeroinclusion}  
   0 \in \underbrace{\left[\begin{array}{ccc} \alpha I + \gamma \mathcal{A} & -I & -I \\ I & 0 & -I \\ (1-2\alpha) I & I & \alpha I + \gamma \mathcal{B} \end{array}\right]}_{=: \mathbb{A}_{\alpha}} \textbf{u} 
   \end{equation}  
   for any choice of $\gamma \in \CC$ and $\alpha \in \CC$.
\end{lemma}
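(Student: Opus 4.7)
The plan is to reduce the block inclusion to componentwise identities and verify the two directions by direct substitution. Writing $\textbf{u} = (u, u_2, u_3)^T$, I would expand $0 \in \mathbb{A}_\alpha \textbf{u}$ row by row into the three conditions
\begin{align*}
  0 &\in (\alpha I + \gamma \mathcal{A}) u - u_2 - u_3, \\
  0 &= u - u_3, \\
  0 &\in (1-2\alpha) u + u_2 + (\alpha I + \gamma \mathcal{B}) u_3.
\end{align*}
The middle row is a plain equality that pins down $u_3 = u$, so it should be used first to eliminate $u_3$ from the remaining two rows.

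For the $(\Leftarrow)$ direction, after the substitution $u_3 = u$, the first row asserts the existence of some $a \in \mathcal{A} u$ with $\gamma a = u_2 + (1-\alpha) u$, and the third row the existence of some $b \in \mathcal{B} u$ with $\gamma b = -u_2 - (1-\alpha) u$. Adding these two identities gives $\gamma(a + b) = 0$, so that, assuming $\gamma \neq 0$, we obtain $0 = a + b \in \mathcal{A} u + \mathcal{B} u$.

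For the $(\Rightarrow)$ direction, I would start from $a \in \mathcal{A} u$ and $b \in \mathcal{B} u$ with $a + b = 0$ and construct the explicit witness
\[
  u_3 := u, \qquad u_2 := \gamma a - (1 - \alpha) u = -\gamma b - (1-\alpha) u.
\]
Plugging this back in, each of the three block rows of $\mathbb{A}_\alpha \textbf{u}$ is seen to contain $0$ by direct cancellation.

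The argument is essentially block linear algebra, so there is no serious obstacle. The only subtlety worth flagging is that the equivalence really requires $\gamma \neq 0$: when $\gamma = 0$ the first and third rows carry no information about $\mathcal{A}$ or $\mathcal{B}$, and the witness $\textbf{u} = (u, (\alpha-1)u, u)^T$ satisfies the block inclusion for every $u$, regardless of whether $0 \in \mathcal{A} u + \mathcal{B} u$. I would therefore note this implicit restriction at the start of the proof and then keep the verification of both implications very short.
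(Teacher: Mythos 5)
Your proof is correct and follows essentially the same route as the paper's: use the middle row to eliminate $u_3 = u$, read off the two membership conditions $\gamma \mathcal{A}u \ni u_2 + (1-\alpha)u$ and $\gamma \mathcal{B}u \ni -u_2 - (1-\alpha)u$ and sum them for one direction, and construct the witness $\textbf{u} = (u, (\alpha-1)u + \gamma a, u)^T$ for the other. Your observation that the reverse implication genuinely requires $\gamma \neq 0$ is a valid point that the lemma's phrasing (``for any choice of $\gamma \in \CC$'') glosses over, though it is harmless here since all subsequent applications take $\gamma > 0$.
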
  

\begin{proof} 
   "$\Rightarrow$": \; If $0 \in \mathcal{A}u + \mathcal{B} u$, $\gamma \in \CC$, and $\alpha \in \CC$, then there exist $a \in \gamma \mathcal{A} u$ and $b \in \gamma \mathcal{B} u$ such that $$0 = a + b.$$  
   Therefore, defining $$\textbf{u} = \begin{pmatrix} u \\ (\alpha-1) u + a \\ u \end{pmatrix},$$  
   we obtain  
   \begin{align*}  
   \mathbb{A}_{\alpha} \textbf{u} &= \left[ \begin{array}{c} \alpha u + \gamma \mathcal{A} u - (\alpha-1) u - a - u \\ u - u \\ (1-2\alpha) u + (\alpha-1) u + a + (\alpha I + \gamma \mathcal{B}) u \end{array} \right]  \\  
   &=\left[ \begin{array}{c} \gamma \mathcal{A} u - a \\ 0 \\ a + \gamma \mathcal{B} u\end{array} \right] = \left[ \begin{array}{c} \gamma \mathcal{A} u - a \\ 0 \\ -b + \gamma \mathcal{B} u \end{array} \right] \ni 0.  
   \end{align*}  

   \noindent "$\Leftarrow$": \; For the reverse direction, if  
   $$0 \in \mathbb{A}_{\alpha} \begin{pmatrix} u \\ r \\ s \end{pmatrix} = \left[ \begin{array}{c} \alpha u + \gamma \mathcal{A} u - r - s \\ u - s \\ (1-2\alpha) u + r + \alpha s + \gamma \mathcal{B} s \end{array}\right],$$  
   then it follows that $u = s$ and  
   \begin{align*}  
   	\gamma \mathcal{A} u &\ni u + r - \alpha u = (1-\alpha) u + r, \\  
  	\gamma \mathcal{B} u &\ni (2\alpha - 1) u - r - \alpha u = (\alpha - 1) u - r,  
   \end{align*}  
   hence, we conclude that $$0 \in \mathcal{A} u + \mathcal{B} u.$$  
\end{proof}  

Thus, we have expressed the zero inclusion~\eqref{eq:mismatchedinclusion} to be solved in an equivalent form in a lifted space. In the next step, we now consider the corresponding relaxed preconditioned proximal point method,
\begin{align*}
\mathbf{u}^{k+1} \in (M+\mathbb{A}_{\alpha})M\mathbf{u}^{k}
\end{align*}
for some linear, self-adjoint and positive semi-definite map $M$ that is called preconditioner.

As a preconditioner, we use, as in~\cite[p. 14]{naldi},
\begin{equation}\label{eq:rd-def-M}
    M = \begin{bmatrix}
        I & I & I \\ I & I & I \\ I & I & I
    \end{bmatrix} \in \mathcal{L}\left((X \times Y)^3, (X \times Y)^3\right).
\end{equation}
To establish convergence using Theorems~\cite[Th. 2.14]{naldi} and~\cite[Th. 2.19]{naldi}, we must first prove that $M$ is indeed an admissible preconditioner for $\mathbb{A}_{\alpha}$ according to~\cite[Def. 2.1 + Eq. (2.3)]{naldi}.

\begin{lemma}\label{lm:degeneratepreconditioner}
    Let $\gamma > 0$, $\alpha > -1$, and $\mathbb{A}_{\alpha}$ be defined as in~\eqref{eq:desplittedzeroinclusion}. If 
    \begin{equation}\label{eq:def-tau-gamma}
    \tau \defeq \tfrac{\gamma}{1+\alpha} < \tfrac{1}{\norm{A-V}},
    \end{equation}
    then $M$ in~\eqref{eq:rd-def-M} is an admissible preconditioner for $\mathbb{A}_{\alpha}$ in the sense of \cite[Def. 2.1]{naldi}.
\end{lemma}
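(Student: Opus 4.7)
The plan is to verify the two ingredients of \cite[Def.~2.1]{naldi}: that $M$ is a bounded, linear, self-adjoint, positive semi-definite operator, and that $M + \mathbb{A}_{\alpha}$ admits a single-valued inverse, so that the associated step operator $(M+\mathbb{A}_\alpha)^{-1}M$ is well-defined and single-valued. The first ingredient is immediate from the block-symmetric definition of $M$: linearity, boundedness and self-adjointness follow by inspection, and for any $\mathbf{u} = (u_1, u_2, u_3)^\top \in (X \times Y)^3$ one computes
\begin{equation*}
    \scp{M \mathbf{u}}{\mathbf{u}} = \norm{u_1 + u_2 + u_3}^2 \geq 0.
\end{equation*}

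For the second ingredient, I would start from the block lower triangular expression
\begin{equation*}
    M + \mathbb{A}_{\alpha} = \begin{bmatrix} (1+\alpha)I + \gamma \mathcal{A} & 0 & 0 \\ 2I & I & 0 \\ (2-2\alpha)I & 2I & (1+\alpha)I + \gamma \mathcal{B} \end{bmatrix}.
\end{equation*}
Forward substitution reduces invertibility to invertibility of the three diagonal blocks. The middle block is the identity. For the top block, $\mathcal{A}$ is the direct sum of the maximally monotone subdifferentials $\partial G$ and $\partial F^*$, hence itself maximally monotone, and with $\gamma > 0$, $1+\alpha > 0$ the operator $(1+\alpha)I + \gamma \mathcal{A}$ is maximally monotone and $(1+\alpha)$-strongly monotone; Minty's theorem then furnishes a single-valued, globally defined, $(1+\alpha)^{-1}$-Lipschitz resolvent.

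The main difficulty lies in the bottom block, $(1+\alpha)I + \gamma \mathcal{B} = (1+\alpha)(I + \tau \mathcal{B})$, where $\tau$ is defined in~\eqref{eq:def-tau-gamma}. Since $\mathcal{B}$ is not monotone when $A^* \neq V^*$, I cannot invoke Minty directly. Instead I would split
\begin{equation*}
    \mathcal{B} = \mathcal{B}_{\mathrm{sk}} + \mathcal{B}_{\mathrm{pert}}, \quad \mathcal{B}_{\mathrm{sk}} = \begin{pmatrix} 0 & A^* \\ -A & 0 \end{pmatrix}, \quad \mathcal{B}_{\mathrm{pert}} = \begin{pmatrix} 0 & (V-A)^* \\ 0 & 0 \end{pmatrix},
\end{equation*}
exploit that $\mathcal{B}_{\mathrm{sk}}$ is skew-adjoint to obtain $\norm{(I+\tau \mathcal{B}_{\mathrm{sk}})u}^2 = \norm{u}^2 + \tau^2 \norm{\mathcal{B}_{\mathrm{sk}}u}^2 \geq \norm{u}^2$, so that $I + \tau \mathcal{B}_{\mathrm{sk}}$ is invertible with inverse of norm at most one, and then run a Neumann-series perturbation: since $\tau \norm{\mathcal{B}_{\mathrm{pert}}} = \tau \norm{V-A} < 1$, the operator $(I+\tau \mathcal{B}_{\mathrm{sk}})^{-1} \tau \mathcal{B}_{\mathrm{pert}}$ has norm strictly less than one, so $I + \tau \mathcal{B}$ is invertible with bounded inverse on all of $X \times Y$.

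Assembling the three diagonal inverses by forward substitution yields a single-valued, globally defined inverse of $M + \mathbb{A}_{\alpha}$, and composing with $M$ produces the step operator required by the admissibility definition. The most delicate aspect will be matching the precise form of the admissibility conditions in~\cite[Def.~2.1]{naldi} together with~\cite[Eq.~(2.3)]{naldi}---in particular verifying that single-valuedness on the range of $M$, rather than on all of $(X\times Y)^3$, is what the subsequent convergence theorems demand, and extracting from the block structure above the Lipschitz constant of the step operator that these theorems consume.
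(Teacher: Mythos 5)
Your proposal is correct and follows the same overall skeleton as the paper's proof: self-adjointness and positive semi-definiteness of $M$ are checked directly, and single-valuedness with full domain of $(M+\mathbb{A}_{\alpha})^{-1}M$ is obtained by forward substitution through the block lower triangular operator $M+\mathbb{A}_{\alpha}$, using the resolvent $J_{\tau\cA}$ for the first block and the invertibility of $I+\tau\cB$ for the last. The one place where you genuinely diverge is the treatment of $I+\tau\cB$. The paper argues that this bounded linear operator is injective, hence surjective, ``if $\tau < \tfrac{1}{\norm{\cB}} = \tfrac{1}{\norm{A-V}}$''; but the identity $\norm{\cB} = \norm{A-V}$ does not hold for $\cB = \left(\begin{smallmatrix} 0 & V^* \\ -A & 0\end{smallmatrix}\right)$ in general (for $V=A$ the operator $\cB$ is skew-adjoint with norm $\norm{A}$, not $0$), so the paper's one-line justification only literally delivers invertibility for $\tau < 1/\max\{\norm{A},\norm{V}\}$. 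Your splitting $\cB = \cB_{\mathrm{sk}} + \cB_{\mathrm{pert}}$ into a skew-adjoint part and a perturbation of norm $\norm{V-A}$, followed by the Neumann-series argument for $(I+\tau\cB_{\mathrm{sk}})^{-1}\tau\cB_{\mathrm{pert}}$, is the clean way to obtain invertibility of $I+\tau\cB$ on all of $X\times Y$ exactly under the stated hypothesis $\tau\norm{A-V}<1$, and so your route actually repairs the weakest step of the paper's argument while proving the same lemma. Your closing caveat about matching \cite[Def.~2.1]{naldi} precisely is well placed but harmless here: full single-valuedness on $(X\times Y)^3$, which you establish, is stronger than single-valuedness on $\operatorname{Im}(M)$.
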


\begin{proof}
    It is clear that $M \in \cL \left((X \times Y)^3, (X \times Y)^3\right)$, it is self-adjoint and positive semi-definite. Thus, it remains to verify that $\mathcal{T} \defeq (M+\mathbb{A}_{\alpha})^{-1} M$ is single-valued and that $\dom \mathcal{T} = (X \times Y)^3$.

    We follow the approach in~\cite[p. 14]{naldi}, starting with $u \in (X \times Y)^3$ and
    \begin{align*}
        & v \in (M+\mathbb{A}_{\alpha})^{-1} M u \\
        \iff\quad& (M+\mathbb{A}_{\alpha}) v \ni Mu.
    \end{align*}
    Considering the right-hand side, we have
    $$M u = \begin{bmatrix}
        I & I & I \\ I & I & I \\ I & I & I
    \end{bmatrix} u = \begin{bmatrix}
        \bar u \\ \bar u \\ \bar u
    \end{bmatrix}$$
    for some $\bar u \in X \times Y$.
    
    Using the definition of $\mathbb{A}_{\alpha}$, along with $\tau = \tfrac{\gamma}{1+\alpha}$, and writing $v = (v_1,v_2,v_3)^T$ with $v_1,v_2,v_3 \in X \times Y$, we obtain the sequence of equivalences
    \begin{align*}
        & Mu \in (M+\mathbb{A}_{\alpha}) v \\
        \iff\quad&\left\{\begin{array}{rl}
            \left((\alpha+1)I+\gamma \cA\right)v_1 &\ni \bar u, \\
            2v_1+v_2 &= \bar u, \\
            2(1-\alpha)v_1+2v_2+\left((\alpha+1)I+\gamma \cB\right)v_3 &\ni \bar u.
        \end{array}\right. \\
        \iff\quad&\left\{\begin{array}{rl}
            v_1 &= J_{\tau \cA} \left(\tfrac{\bar u}{\alpha+1}\right), \\
            v_2 &= \bar u - 2 J_{\tau \cA} \left(\tfrac{\bar u}{\alpha+1}\right), \\
            \left(I+\tau \cB\right)v_3 &\ni 2  J_{\tau \cA} \left(\tfrac{\bar u}{\alpha+1}\right) - \tfrac{\bar u}{\alpha+1}.
        \end{array}\right.
    \end{align*}

    Now, since $$\cB = \begin{pmatrix}
        0 & V^* \\ -A & 0
    \end{pmatrix}$$
    is not monotone, its resolvent is not defined. However, as in the proof of Lemma \ref{lemma:fixedpoints-pddr} the operator  $(I+\tau \cB)$ is a bounded linear operator on $X \times Y$. If $\tau < \tfrac{1}{\norm{\cB}} = \tfrac{1}{\norm{A-V}} $, then it is also injective and, by~\cite[Theorem VI.I.3]{werner_funkana}, surjective. Hence, $J_{\tau \cB}=(I+\tau \cB)^{-1}$ exists and has full effective domain. Thus, we obtain
    \begin{align*}
        & Mu \in (M+\mathbb{A}_{\alpha}) v \\
        \iff\quad&\left\{\begin{array}{rl}
            v_1 &= J_{\tau \cA} \left(\tfrac{\bar u}{\alpha+1}\right), \\
            v_2 &= \bar u - 2 J_{\tau \cA} \left(\tfrac{\bar u}{\alpha+1}\right), \\
            v_3 &= J_{\tau \cB} \left(2  J_{\tau \cA} \left(\tfrac{\bar u}{\alpha+1}\right) - \tfrac{\bar u}{\alpha+1} \right).
        \end{array}\right.
    \end{align*}

    Using the properties of the resolvent, we conclude that $(M+\mathbb{A}_{\alpha})^{-1} M$ is single-valued and has full effective domain $\dom \mathcal{T} = (X \times Y)^3$. Therefore, $M$ is a admissible preconditioner for $\mathcal{A}_{\alpha}$.
\end{proof}

Having established that $M$ is a admissible preconditioner for $\mathcal{A}_{\alpha}$ when $\tfrac{\gamma}{1+\alpha} < \tfrac{1}{\norm{A-V}}$, we also observe the identity
\begin{equation}
    M = CC^* \text{ with } C = \begin{bmatrix}
    I \\ I \\ I
\end{bmatrix} \in \cL\left(X \times Y, (X \times Y)^3 \right).
\end{equation}

Thus, we now consider the iteration
\begin{equation}
    w^{k+1} = w^k + \lambda_k \left(C^*(M+\mathbb{A}_{\alpha})^{-1}C w^k - w^k\right) \label{eq:degeneratedpreconditionedproximalpoint}
\end{equation}
from \cite[Eq. (2.15)]{naldi}.
We aim to show that this is equivalent to the primal-dual Douglas-Rachford method with mismatched adjoint~\eqref{alg-part:pddr_mismatch}.

\begin{theorem}\label{thm:equivalence_pddr_precond}
    Let $\cA$ and $\cB$ be defined as in~\eqref{eq:pddrmm-splitting}. Furthermore, let $\gamma > 0$ and $\alpha > -1$ be chosen such that
    \begin{equation*}
        \tau \defeq \tfrac{\gamma}{1+\alpha} < \tfrac{1}{\norm{A-V}}
    \end{equation*}
    holds, and let $\mathbb{A}_{\alpha}$ be defined as in~\eqref{eq:desplittedzeroinclusion}. Then, the iteration~\eqref{eq:degeneratedpreconditionedproximalpoint} with relaxation parameters $\lambda_k = \lambda \in (0,2)$ for all $k \in \NN$ is equivalent to the primal-dual Douglas-Rachford method with mismatched adjoint~\eqref{alg-part:pddr_mismatch} with extrapolation parameter $\theta = \tfrac{\lambda}{1+\alpha}$ in the sense that the relation
    \begin{align*}
        \begin{bmatrix} x^{k+1} \\ y^{k+1} \end{bmatrix} = J_{\tau \cA}\left(\Tilde{w}^k \right), \; 
        \begin{bmatrix} v^{k+1} \\ w^{k+1} \end{bmatrix} = J_{\tau \cB}\left(2 J_{\tau \cA}\left(\Tilde{w}^k \right) - \Tilde{w}^k\right), \;
        \begin{bmatrix} p^{k+1} \\ q^{k+1} \end{bmatrix} = \Tilde{w}^{k+1}
    \end{align*}
    with $\Tilde{w}^{k} \defeq \tfrac{w^k}{1+\alpha}$ holds for the iterates of both algorithms for all $k \in \NN$.
\end{theorem}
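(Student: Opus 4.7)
The plan is to directly compute the preconditioned proximal point iteration \eqref{eq:degeneratedpreconditionedproximalpoint} using the explicit formula for $(M+\mathbb{A}_\alpha)^{-1} M$ that was already derived inside the proof of Lemma~\ref{lm:degeneratepreconditioner}, and to compare the resulting update rule to the iteration \eqref{alg-part:pddr_mismatch}. The correspondence between the variables will follow by a straightforward induction, with the base case handled by consistent initialization.

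The first step is to observe that $Cw^k = (w^k, w^k, w^k)^T$, so that the computation from the proof of Lemma~\ref{lm:degeneratepreconditioner} applies with $\bar u = w^k$. Writing $\tilde w^k = w^k/(1+\alpha)$, this yields
\begin{equation*}
(M+\mathbb{A}_\alpha)^{-1} Cw^k = \begin{bmatrix} J_{\tau \cA}(\tilde w^k) \\ w^k - 2 J_{\tau \cA}(\tilde w^k) \\ J_{\tau \cB}\bigl(2 J_{\tau \cA}(\tilde w^k) - \tilde w^k\bigr) \end{bmatrix}.
\end{equation*}
Since $C^*$ acts on $(X\times Y)^3$ by summing its three blocks, the three inner components telescope to give
\begin{equation*}
C^*(M+\mathbb{A}_\alpha)^{-1} Cw^k = w^k - J_{\tau \cA}(\tilde w^k) + J_{\tau \cB}\bigl(2 J_{\tau \cA}(\tilde w^k) - \tilde w^k\bigr),
\end{equation*}
and substituting into \eqref{eq:degeneratedpreconditionedproximalpoint} with $\lambda_k = \lambda$ yields
\begin{equation*}
w^{k+1} = w^k + \lambda \bigl( J_{\tau \cB}(2J_{\tau \cA}(\tilde w^k) - \tilde w^k) - J_{\tau \cA}(\tilde w^k) \bigr).
\end{equation*}
Dividing by $(1+\alpha)$ and using $\theta = \lambda/(1+\alpha)$ transforms this into the update $\tilde w^{k+1} = \tilde w^k + \theta(K - J)$, where $J \defeq J_{\tau \cA}(\tilde w^k)$ and $K \defeq J_{\tau \cB}(2J - \tilde w^k)$.

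Next, I would unpack the block structure. Since $\cA = \operatorname{diag}(\partial G, \partial F^*)$, the resolvent $J_{\tau \cA}$ decouples into $\prox_{\tau G}$ and $\prox_{\tau F^*}$, so $J = (x^{k+1}, y^{k+1})^T$ with the PDDR formulas for $x^{k+1}$, $y^{k+1}$ (once one identifies $\tilde w^k = (p^k, q^k)^T$). The resolvent $J_{\tau \cB}$ is, by definition, the solution of the linear system appearing in the third line of \eqref{alg-part:pddr_mismatch}, so $K = (v^{k+1}, w^{k+1})^T$ with the PDDR dual variables. Finally, the identity $\tilde w^{k+1} = \tilde w^k + \theta(K - J)$ is exactly the concatenation of the updates for $p^{k+1}$ and $q^{k+1}$ in \eqref{alg-part:pddr_mismatch}, which establishes $\tilde w^{k+1} = (p^{k+1}, q^{k+1})^T$ and closes the induction.

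I expect the main obstacles to be mostly notational rather than mathematical: first, the symbol $w$ plays two distinct roles (the iterate of the preconditioned proximal point method and the dual auxiliary variable of \eqref{alg-part:pddr_mismatch}), so the proof has to be written carefully to avoid confusion; second, one must justify that $J_{\tau \cB}$ really coincides with the matrix inverse appearing in the third line of \eqref{alg-part:pddr_mismatch}, which is where the hypothesis $\tau < 1/\norm{A-V}$ enters via the same invertibility argument already used in Lemmas~\ref{lemma:fixedpoints-pddr} and \ref{lm:degeneratepreconditioner}. Beyond these bookkeeping issues the proof is a direct computation.
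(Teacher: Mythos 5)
Your proposal is correct and follows essentially the same route as the paper: it reuses the explicit formula for $(M+\mathbb{A}_{\alpha})^{-1}$ applied to a vector of the form $(\bar u,\bar u,\bar u)^T$ derived in the proof of Lemma~\ref{lm:degeneratepreconditioner} (with $\bar u = w^k$, since $Cw^k$ has exactly this form), telescopes the three blocks under $C^*$, rescales by $1+\alpha$ to pass from $\lambda$ to $\theta$, and identifies $J_{\tau\cA}$ and $J_{\tau\cB}$ with the proximal steps and the linear solve of~\eqref{alg-part:pddr_mismatch}. The additional care you take with the double use of the symbol $w$ and with the invertibility of $I+\tau\cB$ is sound but does not change the argument.
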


\begin{proof}
    From the considerations in the proof of the previous Lemma~\ref{lm:degeneratepreconditioner}, we have
    \begin{equation*}
        C^*(M+\mathbb{A}_{\alpha})^{-1}C w^k = w^k - J_{\tau \cA}\left(\tfrac{w^k}{1+\alpha}\right) + J_{\tau \cB}\left(2 J_{\tau \cA}\left(\tfrac{w^k}{1+\alpha}\right) - \tfrac{w^k}{1+\alpha}\right).
    \end{equation*}
    Thus, we obtain
    \begin{align*}
        w^{k+1} &= w^k + \lambda \left(C^*(M+\mathbb{A}_{\alpha})^{-1}C w^k - w^k\right) \\
        &= w^{k} + \lambda \left(J_{\tau \cB}\left(2 J_{\tau \cA}\left(\tfrac{w^k}{1+\alpha}\right) - \tfrac{w^k}{1+\alpha}\right) - J_{\tau \cA}\left(\tfrac{w^k}{1+\alpha}\right) \right).
    \end{align*}
    Now, considering $\Tilde{w}^{k} = \tfrac{w^k}{1+\alpha}$ for $k \in \NN$, we obtain
    \begin{equation*}
        \Tilde{w}^{k+1} = \Tilde{w}^k + \theta \left( J_{\tau \cB}\left(2 J_{\tau \cA}\left(\Tilde{w}^k \right) - \Tilde{w}^k \right) - J_{\tau \cA}\left(\Tilde{w}^k \right) \right),
    \end{equation*}
    which corresponds exactly to the primal-dual Douglas-Rachford method with mismatched adjoint~\eqref{alg-part:pddr_mismatch}. Here, we have
    \begin{align*}
        \begin{bmatrix} x^{k+1} \\ y^{k+1} \end{bmatrix} = J_{\tau \cA}\left(\Tilde{w}^k \right), \; 
        \begin{bmatrix} v^{k+1} \\ w^{k+1} \end{bmatrix} = J_{\tau \cB}\left(2 J_{\tau \cA}\left(\Tilde{w}^k \right) - \Tilde{w}^k\right), \;
        \begin{bmatrix} p^{k+1} \\ q^{k+1} \end{bmatrix} = \Tilde{w}^{k+1}.
    \end{align*}
    Therefore, the two methods are identical up to a scaling of the iterates.
\end{proof}

From the convergence of the equivalent relaxed preconditioned proximal point method in the theorem, we also conclude the convergence of the primal-dual Douglas-Rachford method with mismatched adjoint. In particular, due to the non-expansiveness of the resolvent from \cite[Prop. 23.8]{Bauschke}, the primal variables $\left(x^{k+1}\right)_{k\in\NN}$ and the dual variables $\left(y^{k+1}\right)_{k\in\NN}$ also converge.

\subsection{Linear Convergence}

If we can show that the relaxed preconditioned proximal point method from Theorem~\ref{thm:equivalence_pddr_precond} converges linearly, it follows that the iterates of the primal-dual Douglas-Rachford method with mismatched adjoint~\eqref{alg-part:pddr_mismatch} also exhibit linear convergence. To establish this, we will use~\cite[Th. 2.19]{naldi} and \cite[Prop. 2.5 + Prop. 2.18]{naldi}.

We begin our analysis by identifying conditions for the monotonicity of $\mathbb{A}_{\alpha}$. In the introduction, we observed that in the case of strongly convex $G$ and $F^*$, an alternative decomposition of the zero inclusion~\eqref{eq:mismatchedinclusion} leads to a method that still maintains linear convergence. The idea behind this decomposition was to introduce operators $\Sigma_{\cA} \in \cL(X\times Y, X \times Y)$ and $\Sigma_{\cB} \in \cL(X\times Y, X \times Y)$ such that both $\cA - \Sigma_{\cA}$ and $\cB + \Sigma_{\cB}$ remain maximally monotone. To ensure that the solution remains unchanged, the adapted primal-dual Douglas-Rachford method with mismatched adjoint required $\Sigma_{\cA} = \Sigma_{\cB}$. We now adopt this idea to prove the convergence of the non-adapted method.

Accordingly, we write
\begin{equation*}
    \mathbb{A}_{\alpha} = \begin{bmatrix}
        \alpha I + \gamma \Sigma_{\cA} + \gamma \left(\mathcal{A}-\Sigma_{\cA}\right) & -I & -I \\ 
        I & 0 & -I \\ (1-2\alpha) I & I & 
        \alpha I - \gamma \Sigma_{\cB} + \gamma \left(\mathcal{B}+\Sigma_{\cB}\right)
    \end{bmatrix}.
\end{equation*}

If $\cA - \Sigma_{\cA}$ and $\cB + \Sigma_{\cB}$ are maximally monotone operators, then $\mathbb{A}_{\alpha}$ is monotone for $\gamma > 0$ if
\begin{equation}\label{eq:equivalent-monotone-operator}
    \Tilde{\mathbb{A}}_{\alpha} \defeq \begin{bmatrix}
        \alpha I + \gamma \Sigma_{\cA} & 0 & 0 \\
        0 & 0 & 0 \\
        - 2 \alpha I & 0 & \alpha I  - \gamma \Sigma_{\cB}
    \end{bmatrix}
\end{equation}
is monotone. We set
\begin{equation}\label{eq:pddr-sigmas}
    \Sigma_{\cA} = \begin{pmatrix} \mu_G I & 0 \\ 0 & \mu_{F^*} I \end{pmatrix} \; \text{ and }\; \Sigma_{\cB} = \begin{pmatrix} \Tilde{\mu}_G I & 0 \\ 0 & \Tilde{\mu}_{F^*} I \end{pmatrix}.
\end{equation}
By appropriately choosing $\Sigma_{\cA}$ and $\Sigma_{\cB}$, we can derive conditions that guarantee monotonicity.

\begin{lemma}\label{lm:conds-alpha-monotony}
    Let $G$ and $F^*$ be strongly convex functions with constants $\gamma_G$ and $\gamma_{F^*}$, respectively, and let $\mu_G, \mu_{F^*}, \Tilde{\mu}_G$, and $\Tilde{\mu}_{F^*}$ be constants satisfying 
    $$0 < \Tilde{\mu}_G < \mu_G < \gamma_G, \quad 0 < \Tilde{\mu}_{F^*} < \mu_{F^*} < \gamma_{F^*},$$
    and 
    $$\Tilde{\mu}_G \Tilde{\mu}_{F^*} \geq \frac14 \norm{A-V}^2.$$
    Then, $\mathbb{A}_{\alpha}$ is monotone if 
    \begin{equation}\label{eq:conds-alpha}
    \alpha \geq \max\left\{\tfrac{\gamma \mu_G \Tilde{\mu}_G}{\mu_G - \Tilde{\mu}_G}, \tfrac{\gamma \mu_{F^*} \Tilde{\mu}_{F^*}}{\mu_{F^*} - \Tilde{\mu}_{F^*}} \right\}
    \end{equation}
    holds.
\end{lemma}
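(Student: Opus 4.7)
The strategy is handed to us by the decomposition preceding the statement: the operator $\mathbb{A}_{\alpha}$ is written as the sum of $\tilde{\mathbb{A}}_{\alpha}$, a block-diagonal piece involving $\gamma(\mathcal A - \Sigma_{\cA})$ and $\gamma(\cB + \Sigma_{\cB})$, and a constant skew part (whose contribution to any symmetric bilinear form is zero). Since the sum of (maximally) monotone operators is monotone, the plan is to verify three things separately: (i) $\cA - \Sigma_{\cA}$ is maximally monotone; (ii) $\cB + \Sigma_{\cB}$ is maximally monotone; (iii) the constant linear operator $\tilde{\mathbb{A}}_{\alpha}$ of \eqref{eq:equivalent-monotone-operator} is monotone. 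The third item will be the actual content of the lemma.

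For (i), I would note that by the block-diagonal structure it suffices to verify that $\partial G - \mu_G I$ and $\partial F^* - \mu_{F^*} I$ are maximally monotone; this follows from \cite[Ex.~22.4]{Bauschke} applied to the convex functions $G - \tfrac{\mu_G}{2}\|\cdot\|^2$ and $F^* - \tfrac{\mu_{F^*}}{2}\|\cdot\|^2$, where the hypothesis $\mu_G < \gamma_G$, $\mu_{F^*} < \gamma_{F^*}$ ensures convexity is retained. For (ii), since $\cB + \Sigma_{\cB}$ is bounded linear with full domain, maximality follows from monotonicity, which I would establish exactly as in the proof of Theorem~\ref{thm:existenceoffixedpoints}: expanding $\langle u,(\cB+\Sigma_{\cB})u\rangle$ for $u=(x,y)$ yields $\tilde\mu_G\|x\|^2 + \tilde\mu_{F^*}\|y\|^2 + \langle (V-A)x,y\rangle$, and Cauchy--Schwarz together with Young's inequality gives nonnegativity precisely under the assumption $\tilde\mu_G\tilde\mu_{F^*} \geq \tfrac14\|A-V\|^2$.

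The main computation is (iii). Writing $u = (u_1,u_2,u_3) \in (X\times Y)^3$, the middle row of $\tilde{\mathbb{A}}_{\alpha}$ vanishes and
\begin{equation*}
    \langle u, \tilde{\mathbb{A}}_{\alpha} u\rangle = \langle u_1, (\alpha I + \gamma\Sigma_{\cA}) u_1 \rangle - 2\alpha\langle u_3,u_1\rangle + \langle u_3, (\alpha I - \gamma \Sigma_{\cB}) u_3\rangle.
\end{equation*}
Because $\Sigma_{\cA}$ and $\Sigma_{\cB}$ are themselves block-diagonal, this quadratic form decouples into an $X$-part and a $Y$-part, and nonnegativity of each reduces to the positive semidefiniteness of the $2\times 2$ matrices
\begin{equation*}
    \begin{pmatrix} \alpha+\gamma\mu_G & -\alpha \\ -\alpha & \alpha - \gamma\tilde\mu_G\end{pmatrix}\quad\text{and}\quad \begin{pmatrix} \alpha+\gamma\mu_{F^*} & -\alpha \\ -\alpha & \alpha - \gamma\tilde\mu_{F^*}\end{pmatrix}.
\end{equation*}
Expanding the determinant of the first matrix yields $\alpha\gamma(\mu_G - \tilde\mu_G) - \gamma^2\mu_G\tilde\mu_G \geq 0$, which, using $\mu_G > \tilde\mu_G > 0$, rearranges to $\alpha \geq \tfrac{\gamma\mu_G\tilde\mu_G}{\mu_G-\tilde\mu_G}$. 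The same computation for the $F^*$-block gives the companion threshold, and at such $\alpha$ the diagonal entries are automatically nonnegative. Combining the two thresholds produces exactly \eqref{eq:conds-alpha}.

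I do not expect a real obstacle; the only delicate point is the bookkeeping in (iii), in particular recognizing that the skew off-diagonal block $(1-2\alpha)I$ in $\mathbb{A}_{\alpha}$ versus $-2\alpha I$ in $\tilde{\mathbb{A}}_{\alpha}$ contributes trivially once the symmetric-plus-skew split is made, so that the $2\times 2$ reduction is precisely the one stated. The strict inequalities $\tilde\mu_G < \mu_G$ and $\tilde\mu_{F^*} < \mu_{F^*}$ are essential to keep the bounds on $\alpha$ finite.
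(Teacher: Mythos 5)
Your proposal is correct and follows essentially the same route as the paper: decompose $\mathbb{A}_{\alpha}$ into the maximally monotone pieces $\gamma(\cA-\Sigma_{\cA})$ and $\gamma(\cB+\Sigma_{\cB})$, a skew part, and $\tilde{\mathbb{A}}_{\alpha}$, then reduce the monotonicity of $\tilde{\mathbb{A}}_{\alpha}$ to the nonnegativity of two quadratic forms of the type $(\alpha+\gamma\mu)\|x\|^2 - 2\alpha\langle x,y\rangle + (\alpha-\gamma\tilde\mu)\|y\|^2$. The paper phrases the final step via Young's inequality as the condition $(\alpha+\gamma\mu)(\alpha-\gamma\tilde\mu)\geq\alpha^2$ rather than as positive semidefiniteness of a $2\times 2$ matrix, but this is the identical determinant condition and yields the same threshold on $\alpha$.
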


\begin{proof}
    Let $\Sigma_{\cA}$ and $\Sigma_{\cB}$ be defined as in Equation~\eqref{eq:pddr-sigmas}. 

    With the chosen values of $\mu_G, \mu_{F^*}, \Tilde{\mu}_G$, and $\Tilde{\mu}_{F^*}$, both $\cA - \Sigma_{\cA}$ and $\cB + \Sigma_{\cB}$ are maximally monotone. 

    Based on the previous analysis, we now investigate the monotonicity of the operator $\Tilde{\mathbb{A}}_{\alpha}$. Substituting the definitions of $\Sigma_{\cA}$ and $\Sigma_{\cB}$, we obtain the operator
    \begin{equation*}
        \Tilde{\mathbb{A}}_{\alpha} = \begin{bmatrix}
            (\alpha + \gamma \mu_G) I & 0 & 0 & 0 & 0 \\
            0 & (\alpha + \gamma \mu_{F^*}) I & 0 & 0 & 0 \\
            0 & 0 & 0 & 0 & 0 \\
            -2 \alpha I & 0 & 0 & (\alpha - \gamma \Tilde{\mu}_G) I & 0 \\
            0 & -2 \alpha I & 0 & 0 & (\alpha - \gamma \Tilde{\mu}_{F^*}) I 
        \end{bmatrix}.
    \end{equation*}This operator maps from $X \times Y \times (X \times Y) \times X \times Y$ into itself as a bounded linear operator.

    To analyze its monotonicity, we consider a vector $v = (a,b,c,d,e)$ with $a,d\in X, b,e\in Y$ and $c\in X\times Y$ in this space and compute
    \begin{align*}
        \scp{v}{\Tilde{\mathbb{A}}_{\alpha} v} =&\; (\alpha + \gamma \mu_G) \norm{a}^2 - 2 \alpha \scp{a}{d} + (\alpha - \gamma \Tilde{\mu}_{G}) \norm{d}^2 \\
        &\; + (\alpha + \gamma \mu_{F^*}) \norm{b}^2 - 2 \alpha \scp{b}{e} + (\alpha - \gamma \Tilde{\mu}_{F^*}) \norm{e}^2.
    \end{align*}
    
    We quickly see that both summands follow the general form
    $$(\alpha + \gamma \mu) \norm{x}^2 - 2 \alpha \scp{x}{y} + (\alpha - \gamma \Tilde{\mu}) \norm{y}^2$$
    for specific choices of $x, y, \mu$, and $\Tilde{\mu}$. 

    Now, applying the Young's inequality, we obtain that
    $$(\alpha + \gamma \mu) \norm{x}^2 - 2 \alpha \scp{x}{y} + (\alpha - \gamma \Tilde{\mu}) \norm{y}^2 \geq 0$$
    holds if $\alpha > \gamma \Tilde{\mu}$ and $(\alpha + \gamma \mu)(\alpha - \gamma \Tilde{\mu}) \geq \alpha^2$ are satisfied. This leads to the condition
    $$\alpha \geq \tfrac{\gamma \mu \Tilde{\mu}}{\mu - \Tilde{\mu}}.$$
    
    Moreover, this condition also includes 
    $$\alpha \geq \tfrac{\gamma \mu \Tilde{\mu}}{\mu - \Tilde{\mu}} = \tfrac{\gamma \Tilde{\mu}}{\tfrac{\mu - \Tilde{\mu}}{\mu}} = \tfrac{\gamma \Tilde{\mu}}{1-\tfrac{\Tilde{\mu}}{\mu}} > \gamma \Tilde{\mu}.$$
    
    Thus, $\mathbb{A}_{\alpha}$ is monotone if
    $$\alpha \geq \max\left\{\tfrac{\gamma \mu_G \Tilde{\mu}_G}{\mu_G - \Tilde{\mu}_G}, \tfrac{\gamma \mu_{F^*} \Tilde{\mu}_{F^*}}{\mu_{F^*} - \Tilde{\mu}_{F^*}} \right\} > 0.$$
\end{proof}

At this point, we also already know that choosing $\theta \in \left(0, \tfrac{2}{\alpha+1}\right)$ with an $\alpha$ satisfying condition~\eqref{eq:conds-alpha} ensures the convergence of the primal-dual Douglas-Rachford method with mismatched adjoint to a fixed point, provided that such a fixed point exists.

\begin{corollary}\label{cor:weak_convergence}
    Let the conditions from Lemma~\ref{lm:conds-alpha-monotony} be satisfied and let
    $$c = \min\left\{\tfrac{\mu_G - \Tilde{\mu}_G}{\mu_G \Tilde{\mu}_G},\tfrac{\mu_{F^*} - \Tilde{\mu}_{F^*}}{\mu_{F^*} \Tilde{\mu}_{F^*}}\right\}.$$
    Then, the primal-dual Douglas-Rachford method with mismatched adjoint converges weakly for any 
    \begin{equation*}        
    \tau \in \left(0,\min(\tfrac{1}{\norm{A-V}},c)\right)\,\text{ and }\,\theta \in \left(0, 2-2 \tfrac{\tau}{c}\right)
    \end{equation*}
    to the fixed point $\hat u \in \left(\cA + \cB\right)^{-1}(0)$.
\end{corollary}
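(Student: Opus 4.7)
The idea is to reduce the claim to the general weak convergence theorem for relaxed preconditioned proximal point iterations~\cite[Th.~2.14]{naldi}, using Theorem~\ref{thm:equivalence_pddr_precond} as the translation between the two formulations. The plan proceeds in three steps.

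First, I would translate the admissible ranges of $\tau$ and $\theta$ into parameters for the preconditioned iteration~\eqref{eq:degeneratedpreconditionedproximalpoint}. Given $\tau \in (0,\min(1/\norm{A-V},c))$ and $\theta \in (0, 2-2\tau/c)$, I would set
\begin{equation*}
  \alpha = \tfrac{\tau}{c-\tau}, \qquad \gamma = \tfrac{\tau c}{c-\tau}, \qquad \lambda = \theta(1+\alpha) = \theta \cdot \tfrac{c}{c-\tau}.
\end{equation*}
A direct check yields $\tau = \gamma/(1+\alpha)$, $\alpha = \gamma/c$, and $\lambda \in (0,2)$. The identity $\alpha = \gamma/c$ together with the definition of $c$ is precisely the threshold condition~\eqref{eq:conds-alpha}, so Lemma~\ref{lm:conds-alpha-monotony} applies and $\mathbb{A}_{\alpha}$ is monotone. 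Since $\tau < 1/\norm{A-V}$, Lemma~\ref{lm:degeneratepreconditioner} yields that $M$ is an admissible preconditioner for $\mathbb{A}_{\alpha}$.

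Second, I would secure existence of the target fixed point. The hypotheses of Lemma~\ref{lm:conds-alpha-monotony} give $\gamma_G \gamma_{F^*} > \mu_G \mu_{F^*} > \Tilde{\mu}_G \Tilde{\mu}_{F^*} \geq \tfrac14 \norm{A-V}^2$, so Theorem~\ref{thm:existenceoffixedpoints} provides a unique $\hat u \in X \times Y$ solving the zero inclusion~\eqref{eq:mismatchedinclusion}, hence $\hat u \in (\cA+\cB)^{-1}(0)$. With all hypotheses of~\cite[Th.~2.14]{naldi} in place, that theorem yields weak convergence of the sequence $(w^k)_k$ produced by~\eqref{eq:degeneratedpreconditionedproximalpoint} to a limit associated with a zero of $\mathbb{A}_{\alpha}$. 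Then, using Theorem~\ref{thm:equivalence_pddr_precond} together with $[p^{k+1}; q^{k+1}] = w^k/(1+\alpha)$, the dual sequence $(p^k,q^k)_k$ converges weakly to a limit corresponding to $\hat u$, and the primal update $[x^{k+1}; y^{k+1}] = J_{\tau \cA}([p^k; q^k])$ yields weak convergence of $(x^k, y^k)_k$ to $\hat u$ via the demiclosedness principle for maximal monotone operators.

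The step I expect to need the most care is this final transfer through $J_{\tau \cA}$, since firm non-expansiveness does not in itself preserve weak limits. I would close this gap by extracting a weakly convergent subsequence of $(x^k,y^k)$, passing to the limit in the subdifferential inclusion $[p^k;q^k] - [x^k;y^k] \in \tau \cA([x^k;y^k])$, and using the demiclosedness of $\tau \cA$ together with the strong convexity of $G$ and $F^{*}$ to uniquely identify the weak cluster point as $J_{\tau\cA}$ applied to the weak limit of $(p^k,q^k)$; the uniqueness of $\hat u$ from Theorem~\ref{thm:existenceoffixedpoints} then upgrades subsequential to full weak convergence.
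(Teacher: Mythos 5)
Your proposal is correct and follows essentially the same route as the paper: translate $(\tau,\theta)$ into $(\alpha,\gamma,\lambda)$ with $\alpha=\tfrac{\tau}{c-\tau}$ so that condition~\eqref{eq:conds-alpha} holds and $\lambda\in(0,2)$, invoke Lemma~\ref{lm:degeneratepreconditioner}, Lemma~\ref{lm:conds-alpha-monotony} and Theorem~\ref{thm:existenceoffixedpoints}, and conclude via Theorem~\ref{thm:equivalence_pddr_precond} and \cite[Th.~2.14, Prop.~2.5]{naldi}. Your extra care in transferring weak convergence from $(p^k,q^k)$ to $(x^k,y^k)$ through $J_{\tau\cA}$ goes beyond the paper's one-line citation but does not change the argument.
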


\begin{proof}
    By theorem \ref{thm:equivalence_pddr_precond} the iteration \eqref{eq:degeneratedpreconditionedproximalpoint} is equivalent to the primal-dual Douglas-Rachford method with mismatched adjoint, whenever $0 < \tau = \tfrac{\gamma}{1+\alpha} < \norm{A-V}^{-1}$ and $\theta = \tfrac{\lambda}{1+\alpha}$ with $\lambda \in (0,2)$ are satisfied for some $\alpha > -1.$
    Inserting the definition of $\tau$ into \eqref{eq:conds-alpha} yields,
    if the conditions from Lemma~\ref{lm:conds-alpha-monotony} are satisfied, that
    $$\tau \leq \tfrac{\alpha}{1+\alpha} c$$
    has to be fulfilled as well. Therefore $\tfrac{\alpha}{1+\alpha} \in (0,1)$ for $\alpha > 0$ yields to the upper bound on $\tau$. Furthermore it is $\tau \leq \tfrac{\alpha}{1+\alpha} c \Leftrightarrow \alpha \geq \tfrac{\tau}{c-\tau}$ and hence $0 < \theta < \frac{2}{1+\alpha} < \tfrac{2}{1+\tfrac{\tau}{c-\tau}} = 2 - 2 \tfrac{\tau}{c}$.
    Theorem \ref{thm:existenceoffixedpoints} guarantees the existence of an unique fixed point.
    
    The convergence now follows directly from~\cite[Th. 2.14]{naldi} with \cite[Prop. 2.5]{naldi} and the previous results.
\end{proof}

In fact, for certain step sizes, the iterates exhibit linear convergence. Before proving this, we first introduce a few useful statements.

\begin{lemma}\label{lemma:parallelogramm}
    Let $(\mathcal{U}, \norm{\cdot})$ be a real inner product space. Then, for any $a,b,c \in \mathcal{U}$, the following identity holds:
    $$\left\|a+b+c\right\|^2-\left\|a+b-c\right\|^2=\left\|a+c\right\|^2+\left\|b+c\right\|^2-\left\|a-c\right\|^2-\left\|b-c\right\|^2.$$
\end{lemma}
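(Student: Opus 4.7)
The identity is a purely algebraic statement about an inner product space, so the natural approach is to expand both sides in terms of the inner product and verify equality directly. The key observation is the polarization-type identity
\[
  \|u+v\|^2 - \|u-v\|^2 = 4\scp{u}{v},
\]
which holds in any real inner product space and immediately follows from expanding the squared norms via $\|u\pm v\|^2 = \|u\|^2 \pm 2\scp{u}{v} + \|v\|^2$.

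The plan is to apply this identity three times. First, on the left-hand side, I set $u = a+b$ and $v = c$ to obtain
\[
  \|a+b+c\|^2 - \|a+b-c\|^2 = 4\scp{a+b}{c} = 4\scp{a}{c} + 4\scp{b}{c},
\]
using bilinearity in the last step. Then, on the right-hand side, I apply the same polarization identity to the pairs $(a,c)$ and $(b,c)$:
\[
  \|a+c\|^2 - \|a-c\|^2 = 4\scp{a}{c}, \qquad \|b+c\|^2 - \|b-c\|^2 = 4\scp{b}{c}.
\]
Adding the two gives exactly $4\scp{a}{c} + 4\scp{b}{c}$, matching the left-hand side.

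There is no real obstacle here: the lemma is a one-line consequence of the polarization identity and bilinearity of the inner product. I expect the formal write-up to consist of just these two expansions and the observation that the resulting expressions coincide.
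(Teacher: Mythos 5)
Your proof is correct, and it takes a genuinely different (and shorter) route than the paper. You reduce everything to the polarization identity $\norm{u+v}^2-\norm{u-v}^2=4\scp{u}{v}$ and bilinearity: with $u=a+b$, $v=c$ the left-hand side becomes $4\scp{a}{c}+4\scp{b}{c}$, which is exactly what the two polarization differences on the right-hand side sum to. The paper instead never opens up the inner product: it adds and subtracts $\norm{a}^2$ and $\norm{b}^2$ and then rewrites each pair of norms using the parallelogram law $\norm{x}^2+\norm{y}^2=\tfrac12\left(\norm{x+y}^2+\norm{x-y}^2\right)$, so that all terms telescope to zero while staying entirely at the level of norms. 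The two arguments are equivalent in strength (both use only the real inner product structure), but yours is more transparent and makes the underlying reason for the identity---bilinearity of $\scp{\cdot}{\cdot}$---explicit, whereas the paper's version is a purely norm-based manipulation that is somewhat longer to verify. No gap; your write-up as planned would be a complete proof.
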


\begin{proof}
Using the parallelogram identity, we compute:
\begin{align*}
&\; \| a +b+c\left\|^2-\right\| a+b-c\left\|^2-\right\| a+c\left\|^2-\right\| b+c\left\|^2+\right\| a-c\left\|^2+\right\| b-c \|^2 \\
= &\, \left\|a+b+c\right\|^2-\left\|a+b-c\right\|^2-\left\|a+c\right\|^2-\left\|b\right\|^2-\left\|b+c\right\|^2-\left\|a\right\|^2 \\
& \quad+\left\|a-c\right\|^2+\left\|b\right\|^2+\left\|b-c\right\|^2+\left\|a\right\|^2 \\
= &\, \left\|a+b+c\right\|^2-\left\|a+b-c\right\|^2 \\
& \quad-\frac{1}{2}\left(\left\|a+b+c\right\|^2+\left\|a-b+c\right\|^2\right)-\frac{1}{2}\left(\left\|a+b+c\right\|^2+\left\|b-a+c\right\|^2\right) \\
& \quad+\frac{1}{2}\left(\left\|a+b-c\right\|^2+\left\|a-b+c\right\|^2\right)+\frac{1}{2}\left(\left\|a+b-c\right\|^2+\left\|b-a+c\right\|^2\right) \\
= &\, 0.
\end{align*}
\end{proof}

\begin{proposition}
    Let $\mathcal{K}: X \to X$ be an $\eta$-strongly monotone operator, let $p \in X$, and let $\lambda \in \left(0,2\,\tfrac{1+\eta}{1+2\eta}\right]$.
    Define
    \begin{align*}
        x &= J_{\mathcal{K}}(p), \\
        p^{+} &= p + \lambda (x-p).
    \end{align*}
    Suppose there exists a point $p^* \in X$ such that
    $p^* = J_{\mathcal{K}}(p^*)$, then
    $$\norm{p^{+} - p^*} \leq \frac{1+(1-\lambda)\eta}{1+\eta} \norm{p-p^*}.$$
\end{proposition}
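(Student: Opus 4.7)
The plan is to translate the resolvent identities $x = J_\mathcal{K}(p)$ and $p^* = J_\mathcal{K}(p^*)$ into inclusions in $\mathcal{K}$, extract a refined contractivity of $J_\mathcal{K}$ from $\eta$-strong monotonicity, and then control the relaxed update via a case analysis on the sign of $1-\lambda$.

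First, from $x = J_\mathcal{K}(p)$ and $p^* = J_\mathcal{K}(p^*)$ I would extract $p - x \in \mathcal{K}(x)$ and $0 \in \mathcal{K}(p^*)$. Applying $\eta$-strong monotonicity of $\mathcal{K}$ to the pair $(x, p^*)$ yields $\langle x - p^*,\, p - x\rangle \geq \eta \norm{x - p^*}^2$, which rearranges to $\langle p - p^*,\, x - p^*\rangle \geq (1+\eta)\norm{x - p^*}^2$. Combined with the Cauchy--Schwarz inequality, this produces the contractivity estimate $\norm{x - p^*} \leq \tfrac{1}{1+\eta} \norm{p - p^*}$, which is the sharpened firm-nonexpansiveness of $J_\mathcal{K}$ under strong monotonicity.

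Next I would write $p^+ - p^* = (1-\lambda)(p - p^*) + \lambda(x - p^*)$ and expand
\[
\norm{p^+ - p^*}^2 = (1-\lambda)^2 \norm{p-p^*}^2 + 2\lambda(1-\lambda) \langle p - p^*, x - p^*\rangle + \lambda^2 \norm{x-p^*}^2.
\]
The sign of the cross-term coefficient $2\lambda(1-\lambda)$ determines which one-sided bound on the inner product gives an upper bound on the whole expression, so I would split on $\lambda \leq 1$ versus $\lambda > 1$. For $\lambda \in (0,1]$ the coefficient is nonnegative, so Cauchy--Schwarz on $\langle p-p^*, x-p^*\rangle$ completes the square as $\bigl((1-\lambda)\norm{p-p^*} + \lambda \norm{x-p^*}\bigr)^2$, and the contractivity bound then yields $\beta \norm{p-p^*}$ with $\beta = \tfrac{1+(1-\lambda)\eta}{1+\eta}$. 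For $\lambda \in \bigl(1, 2(1+\eta)/(1+2\eta)\bigr]$ the coefficient is negative, so I would instead substitute the lower bound $\langle p-p^*, x-p^*\rangle \geq (1+\eta)\norm{x-p^*}^2$; collecting terms leaves $(1-\lambda)^2 \norm{p-p^*}^2 + \lambda\bigl[2(1+\eta) - \lambda(1+2\eta)\bigr]\norm{x-p^*}^2$, and the coefficient of $\norm{x-p^*}^2$ is nonnegative exactly under the assumed upper bound on $\lambda$, permitting a final application of the contractivity estimate.

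The main obstacle will be the algebraic identity that closes this second case, namely
\[
(1-\lambda)^2(1+\eta)^2 + \lambda\bigl[2(1+\eta) - \lambda(1+2\eta)\bigr] = \bigl((1-\lambda)(1+\eta) + \lambda\bigr)^2 = (1+\eta-\lambda\eta)^2,
\]
which after division by $(1+\eta)^2$ is exactly $\beta^2$. Verifying it reduces (setting $a = 1+\eta$) to the clean factorization $a^2 - 2\lambda a(a-1) + \lambda^2(a-1)^2 = (a - \lambda(a-1))^2$, but this is the one place where the specific upper bound $2(1+\eta)/(1+2\eta)$ on $\lambda$ enters essentially rather than just sufficiently. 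Once the identity is in hand, both cases conclude with the desired inequality $\norm{p^+ - p^*} \leq \beta \norm{p - p^*}$.
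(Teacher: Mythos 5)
Your proof is correct, and it takes a genuinely different route from the paper's. Both arguments start from the same strong-monotonicity inequality $\scp{p-x}{x-p^*}\geq \eta\norm{x-p^*}^2$, but the paper then runs a single unified algebraic computation: it shows directly that $\norm{p^+-p^*}^2-\beta^2\norm{p-p^*}^2$ with $\beta=\tfrac{1+(1-\lambda)\eta}{1+\eta}$ is bounded above by $-\lambda\left(2-\lambda+2(1-\lambda)\eta\right)\norm{x-p+\tfrac{\eta}{1+\eta}(p-p^*)}^2$, which is nonpositive precisely when $\lambda\leq\tfrac{2+2\eta}{1+2\eta}$. You instead first extract the sharpened contraction $\norm{x-p^*}\leq\tfrac{1}{1+\eta}\norm{p-p^*}$ and then split on the sign of the cross-term coefficient $2\lambda(1-\lambda)$: for $\lambda\leq 1$ Cauchy--Schwarz completes the square as a convex-combination bound, while for $\lambda>1$ the lower bound $\scp{p-p^*}{x-p^*}\geq(1+\eta)\norm{x-p^*}^2$ plus the algebraic identity $(1-\lambda)^2(1+\eta)^2+\lambda\left[2(1+\eta)-\lambda(1+2\eta)\right]=\left(1+(1-\lambda)\eta\right)^2$ closes the argument; I checked the identity and it holds. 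Your version is more elementary and makes transparent where the threshold $\tfrac{2(1+\eta)}{1+2\eta}$ comes from (it is exactly the nonnegativity of the coefficient of $\norm{x-p^*}^2$), at the cost of a case distinction that the paper's single completed square avoids. Two small polish items: in the second case you pass from $\norm{p^+-p^*}^2\leq\beta^2\norm{p-p^*}^2$ to the stated inequality, which requires noting $1+(1-\lambda)\eta>0$ on the admissible range of $\lambda$ (the paper records this explicitly; it follows since $1+(1-\lambda)\eta\geq\tfrac{1+\eta}{1+2\eta}$ at the endpoint); and your closing remark that the upper bound on $\lambda$ enters "in the identity" is slightly misplaced --- the identity is valid for all $\lambda$, and the bound is used only to justify substituting the contraction estimate into a term with nonnegative coefficient, as you correctly state earlier.
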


\begin{proof}
    By the definition of the resolvent, we have
    $$0 \in \mathcal{K}p^* \quad\text{and}\quad p-x \in \mathcal{K}x.$$
    If $\mathcal{K}$ is $\eta$-strongly monotone, it follows that
    $$\scp{p-x}{x-p^*} \geq \eta \norm{x-p^*}^2.$$
    This leads to
    \begin{align*}
        1+(1-\lambda)\eta \geq 1+\left(1-\frac{2(1+\eta)}{1+2\eta}\right)\eta = \frac{1+\eta}{1+2\eta} > 0,
    \end{align*}
    and thus
    \begin{align*}
        &\,\norm{p^+ - p^*}^2 - \frac{\left(1+(1-\lambda)\eta\right)^2}{(1+\eta)^2} \norm{p-p^*}^2 \\
       \leq &\, \norm{p-p^*+\lambda (x-p)}^2 - \frac{\left(1+(1-\lambda)\eta\right)^2}{(1+\eta)^2} \norm{p-p^*}^2 \\ 
       &\quad+ \frac{2 \lambda (1+(1-\lambda)\eta)}{1+\eta} \left[ \scp{p-x}{x-p^*} - \eta \norm{x-p^*}^2\right] \\
       \leq &\, \norm{p-p^*+\lambda (x-p)}^2 - \frac{\left(1+(1-\lambda)\eta\right)^2}{(1+\eta)^2} \norm{p-p^*}^2 \\ 
       &\quad+ \frac{2 \lambda (1+(1-\lambda)\eta)}{1+\eta} \left[ \scp{p-x}{x-p+p-p^*} - \eta \norm{x-p+p-p^*}^2\right].
    \end{align*}
    Rearranging yields
    \begin{align*}
        &\;\norm{p^+ - p^*}^2 - \frac{\left(1+(1-\lambda)\eta\right)^2}{(1+\eta)^2} \norm{p-p^*}^2 \\
        =&\; \left(1-\frac{(1+(1-\lambda)\eta)^2}{(1+\eta)^2}\right) \norm{p-p^*}^2 + 2 \lambda \scp{p-p^*}{x-p} \\ 
        &\quad+ \lambda^2 \norm{x-p}^2 - \frac{2 \lambda (1+(1-\lambda)\eta)}{1+\eta} \norm{x-p}^2 - \frac{2 \lambda (1+(1-\lambda)\eta)}{1+\eta} \scp{p-p^*}{x-p} \\
        &\quad- \frac{2\lambda \eta (1+(1-\lambda)\eta)}{1+\eta} \norm{x-p}^2 - \frac{2\lambda \eta (1+(1-\lambda)\eta)}{1+\eta} \norm{p-p^*}^2 \\
        &\quad- \frac{4 \lambda \eta (1+(1-\lambda)\eta)}{1+\eta} \scp{p-p^*}{x-p} \\
        =&\;-\frac{\lambda \eta^2 (2-\lambda+2(1-\lambda)\eta)}{(1+\eta)^2} \norm{p-p^*}^2 - \frac{2 \lambda (2-\lambda+2(1-\lambda)\eta)}{1+\eta} \scp{p-p^*}{x-p} \\
        &\quad+ \frac{\lambda^2-2\lambda+( 3\lambda - 4) \lambda \eta-2 \lambda (1-\lambda) \eta^2}{1+\eta} \norm{x-p}^2 \\
        =&\;-\frac{\lambda \eta^2 (2-\lambda+2(1-\lambda)\eta)}{(1+\eta)^2} \norm{p-p^*}^2 - \frac{2 \lambda \eta (2-\lambda+2(1-\lambda)\eta)}{1+\eta} \scp{p-p^*}{x-p} \\
        &\quad- \lambda (2-\lambda+2(1-\lambda) \eta) \norm{x-p}^2 \\
        =&\; - \lambda (2-\lambda+2(1-\lambda)\eta) \norm{x-p+\frac{\eta}{1+\eta}(p-p^*)}^2 \\
        \leq&\; 0,
    \end{align*}
    where in the last step, we used the equivalence
    \begin{align*}
        2-\lambda+2(1-\lambda)\eta \geq 0 \iff \lambda \leq \tfrac{2+2\eta}{1+2\eta}.
    \end{align*}
    Therefore,
    \begin{align*}
        \norm{p^+ - p^*} \leq \frac{1+(1-\lambda)\eta}{1+\eta} \norm{p-p^*}.
    \end{align*}
\end{proof}

\begin{corollary}\label{cor:contraction-linear-convergence}
    Let $\mathcal{K}: X \to X$ be an $\eta$-strongly monotone operator, let $p \in X$, and let $\lambda \in \left(0,2\,\tfrac{1+\eta}{1+2\eta}\right]$. 

    Furthermore, let the sequence $\left((x^k,p^k)\right)_{k \in \NN}$ be generated from an initial value $p^0 \in X$ according to the iteration:
    \begin{align*}
        x^{k} &= J_{\mathcal{K}}(p^k), \\
        p^{k+1} &= p^{k} + \lambda (x^{k}-p^{k}).
    \end{align*}
    
    Suppose there exist points $x^*, p^* \in X$ such that
    $x^* = p^* = J_{\mathcal{K}}(p^*)$. Then, the sequence satisfies
    \begin{align*}
        \norm{p^{k} - p^*} \leq \left(\frac{1+(1-\lambda)\eta}{1+\eta}\right)^k \norm{p^0 - p^*}
    \end{align*}
    and converges linearly.
\end{corollary}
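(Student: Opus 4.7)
The plan is entirely routine: the corollary is an iterated application of the one-step contraction established in the preceding proposition. First, I would observe that each update
\[
p^{k+1} = p^k + \lambda (x^k - p^k), \qquad x^k = J_{\mathcal{K}}(p^k),
\]
matches exactly the single-step update $p^+ = p + \lambda (x - p)$ with $x = J_{\mathcal{K}}(p)$ analyzed in the proposition. The hypothesis $p^* = J_{\mathcal{K}}(p^*)$ is a property of $\mathcal{K}$ alone, so it is automatically available at every iteration. Applying the proposition with the identifications $p = p^k$ and $p^+ = p^{k+1}$ therefore yields
\[
\|p^{k+1} - p^*\| \leq \frac{1+(1-\lambda)\eta}{1+\eta}\, \|p^k - p^*\|
\]
for every $k \in \NN$.

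A trivial induction on $k$ then gives the claimed bound
\[
\|p^k - p^*\| \leq \left(\frac{1+(1-\lambda)\eta}{1+\eta}\right)^k \|p^0 - p^*\|.
\]
To upgrade this to linear convergence, I would rewrite the contraction factor as
\[
\frac{1+(1-\lambda)\eta}{1+\eta} = 1 - \frac{\lambda\eta}{1+\eta},
\]
which lies strictly in $(0,1)$ whenever $\lambda > 0$ and $\eta > 0$. At the endpoint $\lambda = 2(1+\eta)/(1+2\eta)$ of the admissible interval, a short computation shows that the factor reduces to $1/(1+2\eta)$, still strictly less than one, so the stated range of $\lambda$ is consistent with a genuine contraction.

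There is essentially no technical obstacle here: the real work has already been done in the preceding proposition, whose proof combined the strong monotonicity of $\mathcal{K}$ with an algebraic manipulation to produce the sharp one-step factor. The corollary merely compounds this estimate, and the only thing to keep track of is that the admissible range of $\lambda$ depends only on $\eta$ and is therefore preserved along the iteration.
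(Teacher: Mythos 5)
Your proposal is correct and matches the paper's intent exactly: the corollary is stated without proof precisely because it is the $k$-fold iteration of the one-step contraction in the preceding proposition, which is what you carry out. Your additional check that the factor equals $1-\lambda\eta/(1+\eta)\in(0,1)$ on the whole admissible range of $\lambda$ is a worthwhile sanity check but not needed beyond what the proposition already guarantees.
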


Thus, we have established the foundation for proving linear convergence.

\begin{theorem}\label{thm:pddrmm-linear-convergence}
    Let the conditions from Lemma~\ref{lm:conds-alpha-monotony} be satisfied and let
    $$c = \min\left\{\tfrac{\mu_G - \Tilde{\mu}_G}{\mu_G \Tilde{\mu}_G},\tfrac{\mu_{F^*} - \Tilde{\mu}_{F^*}}{\mu_{F^*} \Tilde{\mu}_{F^*}}\right\}.$$
    Then, the sequence $\left(u^k\right)_{k\in \NN}$ with $u^k = (x^k, y^k)^T,$ $k \in \NN,$ of the primal-dual Douglas-Rachford method with mismatched adjoint~\eqref{alg-part:pddr_mismatch} converges linearly to the fixed point $\hat{u} \in \left(\cA + \cB\right)^{-1}(0)$ for any \begin{equation*}        
    \tau \in \left(0,\tfrac{1}{\norm{A-V}}\right) \cap \left(0, c \right)\,\text{ and }\,\theta \in \left(0, \tfrac{2}{1+\alpha}\right)
    \end{equation*}
    with $\alpha \geq  \tfrac{\tau}{c-\tau}$ and the following holds:
    \begin{equation}
        \norm{u^N-\hat{u}} \in \mathcal{O}\left(\left( \frac{1+(1-\theta (1+\alpha))\eta}{1+\eta}\right)^N\right)
    \end{equation}
    with
    \begin{equation}\label{def:eta}
        \eta = \frac{4\gamma}{27} \min\left\{ \tfrac{ \upsilon}{(1+2\alpha)^2}, \tfrac{ \sigma}{4 \gamma^2 \norm{\cB + \Sigma_{\cB}}^2 + \left(\alpha + \gamma \max \{\Tilde{\mu}_G, \Tilde{\mu}_{F^*}\}\right)^2} \right\},
    \end{equation}
    where $\gamma = (1+\alpha)\tau$, $\sigma = \sigma_{\min}\left(\cB + \Sigma_{\cB}\right)$, and $\upsilon = \min\{\gamma_G - \mu_G, \gamma_{F^*} - \mu_{F^*}\}$.
\end{theorem}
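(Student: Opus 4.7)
The plan is to lift algorithm~\eqref{alg-part:pddr_mismatch} to the degenerate preconditioned proximal point iteration from Theorem~\ref{thm:equivalence_pddr_precond}, quantify a strong monotonicity constant for the corresponding reduced operator, apply Corollary~\ref{cor:contraction-linear-convergence} to get a linear contraction in the lifted variable, and finally transfer the rate to $(u^k)$ via nonexpansivity of the resolvent. Setting $\gamma = (1+\alpha)\tau$ and $\lambda = \theta(1+\alpha) \in (0,2)$, Theorem~\ref{thm:equivalence_pddr_precond} puts the iterates in one-to-one correspondence with $w^{k+1} = w^k + \lambda (C^*(M+\mathbb{A}_\alpha)^{-1}Cw^k - w^k)$. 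The hypothesis $\alpha \geq \tau/(c-\tau)$ is equivalent to $\alpha \geq \gamma/c$, which recovers exactly condition \eqref{eq:conds-alpha}, so $\mathbb{A}_\alpha$ is monotone by Lemma~\ref{lm:conds-alpha-monotony}, and $M$ is admissible by Lemma~\ref{lm:degeneratepreconditioner} since $\tau < 1/\norm{A-V}$. Using $M = CC^*$, the reduced operator $\mathcal{T} = C^*(M+\mathbb{A}_\alpha)^{-1}C$ can be identified with the resolvent $J_\mathcal{K}$ of an induced operator $\mathcal{K}$ on $X \times Y$ via the standard reduction in \cite[Prop.~2.5]{naldi}, at which point a quantitative strong monotonicity constant of $\mathcal{K}$ is exactly what is needed.

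The core of the argument is to upgrade Lemma~\ref{lm:conds-alpha-monotony} to the explicit constant in \eqref{def:eta}. I would reuse the splitting $\mathbb{A}_\alpha = \Tilde{\mathbb{A}}_\alpha + \mathbb{A}_\alpha^{\mathrm{mon}}$, where $\mathbb{A}_\alpha^{\mathrm{mon}}$ collects the maximally monotone part with $\cA$ and $\cB$ replaced by $\cA - \Sigma_\cA$ and $\cB + \Sigma_\cB$, and estimate $\scp{\mathbf{u}-\mathbf{u}'}{\mathbb{A}_\alpha \mathbf{u} - \mathbb{A}_\alpha \mathbf{u}'}$ for arbitrary lifted pairs. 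The $\upsilon$-strong monotonicity of $\cA - \Sigma_\cA$ and the coercivity of $\cB + \Sigma_\cB$ with constant $\sigma = \sigma_{\min}(\cB+\Sigma_\cB) > 0$ (which is guaranteed by the assumption $\Tilde\mu_G \Tilde\mu_{F^*} \geq \tfrac14 \norm{A-V}^2$) provide two independent positive contributions, while the indefinite coupling $-2\alpha I$ in $\Tilde{\mathbb{A}}_\alpha$ and the middle block must be absorbed by Young's inequality with a free parameter, combined with the parallelogram-type identity of Lemma~\ref{lemma:parallelogramm} to recombine the mixed terms. The minimum structure in \eqref{def:eta} emerges by optimizing separately through each source: routing through the $\cA$-block produces the term $\upsilon/(1+2\alpha)^2$ (with the denominator reflecting the absorbed cross-coupling), while routing through the $\cB$-block produces $\sigma/(4\gamma^2\norm{\cB+\Sigma_\cB}^2 + (\alpha+\gamma\max\{\Tilde\mu_G,\Tilde\mu_{F^*}\})^2)$ (with the denominator reflecting the operator-norm bounds on $\cB+\Sigma_\cB$ and the shift from $\Tilde{\mathbb{A}}_\alpha$). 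The overall factor $\tfrac{4\gamma}{27}$ arises from an AM-GM optimization of three positive quantities of fixed sum, in the same spirit as the Peaceman-Rachford analysis of \cite{naldi}. Invoking \cite[Prop.~2.18]{naldi} then converts the resulting bound $\scp{\mathbf{u}-\mathbf{u}'}{\mathbb{A}_\alpha(\mathbf{u}-\mathbf{u}')} \geq \eta \norm{C^*(\mathbf{u}-\mathbf{u}')}^2$ into the $\eta$-strong monotonicity of $\mathcal{K}$.

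With $\mathcal{K}$ shown to be $\eta$-strongly monotone, Corollary~\ref{cor:contraction-linear-convergence} applied with $p^k = w^k$ and $\lambda = \theta(1+\alpha)$ in the admissible range yields $\norm{w^{k+1}-w^*} \leq \tfrac{1+(1-\lambda)\eta}{1+\eta}\norm{w^k-w^*}$, and rescaling gives the same linear bound for $\Tilde w^k = w^k/(1+\alpha)$. Since Theorem~\ref{thm:equivalence_pddr_precond} expresses $u^{k+1} = J_{\tau \cA}(\Tilde w^k)$ and $J_{\tau \cA}$ is nonexpansive by \cite[Prop.~23.8]{Bauschke}, the linear rate transfers directly to $(u^k)$ with the claimed exponent, and existence of a fixed point $\hat u$ is ensured by Theorem~\ref{thm:existenceoffixedpoints}. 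The main technical obstacle will be precisely the strong-monotonicity estimate of the preceding paragraph: balancing the two strongly monotone blocks against the indefinite coupling $-2\alpha I$ with a judiciously chosen Young-inequality weight, then recombining the remaining quadratic forms via Lemma~\ref{lemma:parallelogramm}, is a delicate computation that single-handedly dictates both the minimum structure and the factor $\tfrac{4}{27}$ appearing in \eqref{def:eta}.
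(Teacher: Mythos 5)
Your plan follows the paper's own proof essentially step for step: lift via Theorem~\ref{thm:equivalence_pddr_precond}, split $\mathbb{A}_{\alpha}$ using $\Sigma_{\cA},\Sigma_{\cB}$ to extract the $\gamma\upsilon\norm{u-u'}^2 + \gamma\sigma\norm{s-s'}^2$ lower bound, restrict to $\operatorname{Im}(M)$, trade off the $\sigma$-term against $\norm{\gamma\cB_{\Sigma}(s-s')}^2$ with a free parameter, recombine via Lemma~\ref{lemma:parallelogramm} to obtain $M$-$\eta$-strong monotonicity through \cite[Prop.~2.18]{naldi}, and conclude with Corollary~\ref{cor:contraction-linear-convergence} and nonexpansiveness. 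The only caveat is that you leave the central estimate as a sketch (and the $\tfrac{4}{27}$ actually arises from the scalings $\Lambda_1=\tfrac32(1+2\alpha)I$, $\Lambda_2=\tfrac32(\alpha I+\gamma\Sigma_{\cB})$, $\Lambda_3=3I$ in the parallelogram bound rather than an AM--GM argument), but every structural idea matches the paper's proof.
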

\begin{proof}
The choice of $\alpha$ and the upper bounds on $\tau$ and $\theta$ are due to the same arguments as in \ref{cor:weak_convergence}.
Using Theorem~\ref{thm:equivalence_pddr_precond}, we show that the iterates of iteration~\ref{eq:degeneratedpreconditionedproximalpoint} with \(\lambda = \theta (1+\alpha)\) converge linearly.

To this end, we aim to show that \(M^{-1} \mathbb{A}_{\alpha}\) is \(M\)-strongly monotone. Therefore we define \(\mathbf{u} = (u,r,s)^T\) and \(\mathbf{u}^{\prime} = (u^{\prime},r^{\prime},s^{\prime})^T\). We already know that $(\cA - \Sigma_{\cA})$ defined by the equations \eqref{eq:pddrmm-splitting} and \eqref{eq:pddr-sigmas} is $\upsilon$-strongly monotone with $\upsilon = \min\{\gamma_G - \mu_G, \gamma_{F^*} - \mu_{F^*}\}$. Also, by Lemma~\ref{lm:conds-alpha-monotony}, the operator $\left(\cB + \Sigma_{\cB}\right)$ is monotone. Using $\scp{\Tilde{\mathbb{A}}_{\alpha} \textbf{u}- \Tilde{\mathbb{A}}_{\alpha} \textbf{u}^{\prime}}{\textbf{u}-\textbf{u}^{\prime}} \geq 0$ from Lemma~\ref{lm:conds-alpha-monotony}, the $\upsilon$-strong monotony of $(\cA - \Sigma_{\cA})$ and the definition of $\sigma = \sigma_{\min}\left(\cB + \Sigma_{\cB}\right)$, we obtain:
\begin{align*}
        \scp{\mathbb{A}_{\alpha} \textbf{u}- \mathbb{A}_{\alpha} \textbf{u}^{\prime}}{\textbf{u}-\textbf{u}^{\prime}} \geq&\; \gamma \scp{\left(\cA - \Sigma_{\cA}\right) u - \left(\cA - \Sigma_{\cA}\right) u^{\prime}}{u - u^{\prime}} \\
        &\;+ \gamma \scp{\left(\cB + \Sigma_{\cB}\right)(s-s^{\prime})}{s-s^{\prime}} + \scp{\Tilde{\mathbb{A}}_{\alpha} \textbf{u}- \Tilde{\mathbb{A}}_{\alpha} \textbf{u}^{\prime}}{\textbf{u}-\textbf{u}^{\prime}} \\
        \geq&\; \gamma \upsilon \norm{u-u^{\prime}}^2 + \gamma \sigma \norm{s-s^{\prime}}^2.
\end{align*}
If \(\Tilde{\mu}_G \Tilde{\mu}_{F^*} > \tfrac14 \norm{A-V}^2\), then \(\sigma > 0\).
Now, using the Lipschitz continuity of \(\cB_{\Sigma} \defeq (\cB + \Sigma_{\cB})\), whose Lipschitz constant is given by \(\ell \defeq \norm{\cB_{\Sigma}}\), we obtain:
\begin{align*}
        \scp{\mathbb{A}_{\alpha} \textbf{u}- \mathbb{A}_{\alpha} \textbf{u}^{\prime}}{\textbf{u}-\textbf{u}^{\prime}} \geq \gamma \upsilon \norm{u-u^{\prime}}^2 + \gamma (1-t) \sigma \norm{s-s^{\prime}}^2 + \tfrac{t \sigma}{\gamma \ell^2} \norm{\gamma \cB_{\Sigma} \left(s-s^{\prime}\right)}^2
    \end{align*}
for any \(t \in [0,1]\). At this point, we have not yet introduced the constraint on the image of \(M\). Ultimately, according to \cite[Prop. 2.5 + Prop. 2.18]{naldi}, to prove the \(M\)-monotonicity of \(M^{-1} \mathbb{A}_{\alpha}\), it suffices to consider only pairs \((\mathbf{u},v),(\mathbf{u}^{\prime},v^{\prime}) \in \gr \mathbb{A}_{\alpha}\) for which \(v,v^{\prime} \in \operatorname{Im}(M)\). This corresponds to:
\[
\operatorname{Im}(M) = \left\{ (v_1,v_2,v_3)^T \in (X \times Y)^3 \mid v_1 = v_2 = v_3\right\}.
\]
In particular, for \(v \in \mathbb{A}_{\alpha} \mathbf{u}\), we have \(v_2 = v_3\), and similarly, for \(v^{\prime} \in \mathbb{A}_{\alpha} \mathbf{u}^{\prime}\), we obtain \(v_2^{\prime} = v_3^{\prime}\).
Consequently, it follows that:
\begin{align*}
        - \gamma \cB_{\Sigma} \left(s-s^{\prime}\right) = - 2 \alpha \left(u - u^{\prime}\right) + \left( r - r^{\prime}\right) + \left(1 + \alpha - \gamma \Sigma_B\right)\left(s-s^{\prime}\right).
    \end{align*}
Substituting this into the upper inequality, we obtain  
\begin{align*}
    &\scp{\mathbb{A}_{\alpha} \textbf{u}- \mathbb{A}_{\alpha} \textbf{u}^{\prime}}{\textbf{u}-\textbf{u}^{\prime}} \\ 
    \geq&\; \gamma \upsilon \norm{u-u^{\prime}}^2 + \gamma (1-t) \sigma \norm{s-s^{\prime}}^2 \\ 
    &\;+ \tfrac{t \sigma}{\gamma \ell^2} \norm{- 2 \alpha \left(u - u^{\prime}\right) + \left( r - r^{\prime}\right) + \left((1 + \alpha)I - \gamma \Sigma_B\right)\left(s-s^{\prime}\right)}^2
\end{align*}
for any \(t \in [0,1]\). According to Lemma~\ref{lemma:parallelogramm}, it is 
\begin{align*}
    \norm{a+b+c}^2 \leq \norm{a+b-c}^2 + \norm{a+c}^2 + \norm{b+c}^2 \;\text{ for }a,b,c \in X\times Y.
\end{align*}
Choosing \(a,b,c\) and \(\Lambda_1,\dots,\Lambda_3 \in \mathcal{L}(X \times Y, X \times Y)\) such that  
\begin{align*}
    a+c &= \Lambda_1 (u-u^{\prime}), \\
    b+c &= \Lambda_2 (s-s^{\prime}), \\
    a+b-c &= \Lambda_3 \left(-2\alpha (u-u^{\prime}) + (r-r^{\prime}) + \left((1-\alpha)I - \gamma \Sigma_{\cB}\right) (s-s^{\prime})\right)&\text{and}\\
    a+b+c &= (u-u^{\prime}) + (r-r^{\prime}) + (s-s^{\prime})
\end{align*}
we obtain  
\begin{align*}
    c &= \frac{1}{3}\left[(a+c)+(b+c)-(a+b-c)\right] \\
    &= \frac{1}{3}\left[\left(\Lambda_1 + 2 \alpha \Lambda_3\right) (u-u^{\prime}) - \Lambda_3 (r-r^{\prime}) + \left(\Lambda_2 - (1-\alpha) \Lambda_3 + \gamma \Lambda_3 \Sigma_{\cB} \right)(s-s^{\prime})  \right]
\end{align*}
and with  
\begin{align*}
    a+b+c &= (a+c) + (b+c) - c \\
    &= \left(\frac23 \Lambda_1 - \frac23 \alpha \Lambda_3\right) (u-u^{\prime}) \\
    &\quad+ \left(\frac23 \Lambda_2 + \frac{1-\alpha}{3} \Lambda_3 - \frac{\gamma}{3} \Lambda_3 \Sigma_{\cB}\right)(s-s^{\prime}) + \frac13 \Lambda_3 (r-r^{\prime})
\end{align*}
the admissible choice of operators \(\Lambda_1,\dots,\Lambda_3\) is given by  
\begin{align*}
    I &= \frac13 \Lambda_3 \iff \Lambda_3 = 3 I, \\
    I &= \frac23 \Lambda_1 - 2 \alpha \iff \Lambda_1 = \frac32 (1+2\alpha) I, \\
    I &= \frac23 \Lambda_2 + (1-\alpha) I - \gamma \Sigma_{\cB} \iff \Lambda_2 = \frac32 \left(\alpha I +\gamma \Sigma_{\cB}\right).
\end{align*}
Thus, using the definition~\eqref{eq:pddr-sigmas} of \(\Sigma_{\cB}\) and the notation \(s = (s_1,s_2)^T\) with \(s_1 \in X\) and \(s_2 \in Y\), we obtain the estimate  
    \begin{align*}
        &\scp{\mathbb{A}_{\alpha} \textbf{u}- \mathbb{A}_{\alpha} \textbf{u}^{\prime}}{\textbf{u}-\textbf{u}^{\prime}} \\
        =&\; \tfrac{4\gamma \upsilon}{9(1+2\alpha)^2} \norm{\tfrac32 (1+2\alpha)\left(u-u^{\prime}\right)}^2 \\
        &\;+ \tfrac{4 \gamma (1-t) \sigma}{9 \left(\alpha + \gamma \Tilde{\mu}_G \right)^2} \norm{\tfrac{3}{2}\left(\alpha + \gamma \Tilde{\mu}_G \right)\left(s_1-s_1^{\prime}\right)}^2 \\
        &\;+ \tfrac{4 \gamma (1-t) \sigma}{9 \left(\alpha + \gamma \Tilde{\mu}_{F^*} \right)^2} \norm{\tfrac{3}{2}\left(\alpha + \gamma \Tilde{\mu}_{F^*} \right)\left(s_2-s_2^{\prime}\right)}^2 \\
        &\;+ \tfrac{t \sigma}{9 \gamma \ell^2} \norm{3\left(- 2 \alpha \left(u - u^{\prime}\right) + \left( r - r^{\prime}\right) + \left(1 + \alpha)I- \gamma \Sigma_B\right)\left(s-s^{\prime}\right)\right)}^2 \\
        \geq&\; 3\eta(t) \left[ \norm{\tfrac32 (1+2\alpha)\left(u-u^{\prime}\right)}^2  \right. \\
        &\;+ \norm{\tfrac{3}{2}\left(\alpha + \gamma \Tilde{\mu}_G \right)\left(s_1-s_1^{\prime}\right)}^2 + \norm{\tfrac{3}{2}\left(\alpha + \gamma \Tilde{\mu}_{F^*} \right)\left(s_2-s_2^{\prime}\right)}^2 \\
        &\;+\left.\norm{3\left(- 2 \alpha \left(u - u^{\prime}\right) + \left( r - r^{\prime}\right) + \left(1 + \alpha)I- \gamma \Sigma_B\right)\left(s-s^{\prime}\right)\right)}^2 \right] \\
        =&\; 3\eta(t) \left[ \norm{\tfrac32 (1+2\alpha)\left(u-u^{\prime}\right)}^2  + \norm{\tfrac{3}{2}\left(\alpha I + \gamma \Sigma_{\cB} \right)\left(s-s^{\prime}\right)}^2 \right. \\
        &\;+\left.\norm{3\left(- 2 \alpha \left(u - u^{\prime}\right) + \left( r - r^{\prime}\right) + \left(1 + \alpha)I- \gamma \Sigma_B\right)\left(s-s^{\prime}\right)\right)}^2 \right] \\
        \geq&\; 3 \eta(t) \norm{(u+r+s)-(u^{\prime}+r^{\prime}+s^{\prime})}^2 \\
        =&\; \eta(t) \norm{\textbf{u} - \textbf{u}^{\prime}}^2_M
    \end{align*}
    with
    \begin{equation*}
        \eta(t) = \frac{1}{27} \min\left\{ \tfrac{4 \gamma \upsilon}{(1+2\alpha)^2}, \tfrac{4 \gamma (1-t) \sigma}{\left(\alpha + \gamma \max \{\Tilde{\mu}_G, \Tilde{\mu}_{F^*}\}\right)^2}, \tfrac{t \sigma}{\gamma \ell^2} \right\}.
    \end{equation*}
Now, the second term in the minimum is decreasing in \(t\) and equals zero for \(t=1\), while the last term vanishes at \(t=0\) and increases with \(t\). Thus, we maximize the minimum by computing \(\hat{t}\), where both terms are equal. This leads to  
\begin{equation*}
    \hat{t} = \frac{4 \gamma^2 \ell^2}{4 \gamma^2 \ell^2 + \left(\alpha + \gamma \max \{\Tilde{\mu}_G, \Tilde{\mu}_{F^*}\}\right)^2}
\end{equation*}
and  
\begin{equation*}
    \eta = \eta(\hat{t}) = \frac{4}{27} \min\left\{ \tfrac{\gamma \upsilon}{(1+2\alpha)^2}, \tfrac{ \gamma \sigma}{4 \gamma^2 \ell^2 + \left(\alpha + \gamma \max \{\Tilde{\mu}_G, \Tilde{\mu}_{F^*}\}\right)^2} \right\}.
\end{equation*}
Accordingly, by \cite[Prop. 2.18]{naldi}, \(M^{-1} \mathbb{A}_{\alpha}\) is \(M\)-\(\eta\)-strongly monotone. By \cite[Th. 2.19]{naldi}, for \(M = CC^*\), the operator \((C^* \mathbb{A}_{\alpha} C)^{-1}\) is \(\eta\)-strongly monotone. From \cite[Th. 2.13]{naldi}, it follows that \(C^*(M+\mathbb{A}_{\alpha})^{-1}C\) corresponds to the resolvent \(J_{\left(C^* \mathbb{A}_{\alpha}^{-1} C\right)^{-1}}\).  
Thus, for the iteration~\eqref{eq:degeneratedpreconditionedproximalpoint}, by Corollary~\ref{cor:contraction-linear-convergence}, we obtain  
\begin{align*}
    \norm{w^{k+1} - \hat w} \leq \left(\frac{1+(1-\lambda)\eta}{1+\eta}\right)^k \norm{w^0 - \hat w}
\end{align*}
and therefore, by Theorem~\ref{thm:equivalence_pddr_precond} and due to the nonexpansiveness of the resolvent \(J_{\left(C^* \mathbb{A}_{\alpha}^{-1} C\right)^{-1}}\), the sequence of variables \(\left(u^k\right)_{k\in \NN}\) with \(u^k = (x^k, y^k)^T, k \in \NN\), converges linearly to the fixed point \(\hat{u}\) with  
\[
\norm{u^N-\hat{u}} \in \mathcal{O}\left( \left( \frac{1+(1-\theta (1+\alpha))\eta}{1+\eta} \right)^N\right).
\]
\end{proof}

At this point, we have achieved the goal of proving the linear convergence of the primal-dual Douglas-Rachford method with mismatched adjoint. Furthermore, we have provided a convergence rate that can be easily computed based on the given parameters $\tau$ and $\theta$ of the algorithm and the existing problem formulation.

\subsection{Choice of Parameters}
We now discuss how to choose suitable parameters $\tau$ and $\theta$ for fastest convergence while keeping the computation straightforward. However, we will avoid delving too deeply into technical analysis in some parts to maintain clarity in the step size selection.

Let $G$ be a $\gamma_G$-strongly convex function and $F^*$ a $\gamma_{F^*}$-strongly convex function satisfying
$$\gamma_G \gamma_{F^*} > \frac14 \norm{A-V}^2.$$
We begin by establishing feasible choices for $\mu_G$, $\mu_{F^*}$, $\Tilde{\mu}_G$, and $\Tilde{\mu}_{F^*}$ such that the conditions $0 < \Tilde{\mu}_G < \mu_G < \gamma_G$, $0 < \Tilde{\mu}_{F^*} < \mu_{F^*} < \gamma_{F^*}$, and
$\Tilde{\mu}_G \Tilde{\mu}_{F^*} > \tfrac14 \norm{A-V}^2$
are satisfied. 

Thus, we set
\begin{align*}
    &\gamma_G > \Tilde{\mu}_G > \tfrac{\norm{A-V}}{2} \sqrt{\tfrac{\gamma_G}{\gamma_{F^*}}},
    &\mu_G = \tfrac{1}{2}\left(\gamma_G + \Tilde{\mu}_G \right), \\ &\gamma_{F^*} > \Tilde{\mu}_{F^*} > \tfrac{\norm{A-V}}{2} \sqrt{\tfrac{\gamma_{F^*}}{\gamma_{G}}}, & \mu_{F^*} = \tfrac{1}{2}\left(\gamma_{F^*} + \Tilde{\mu}_{F^*} \right)
\end{align*}
and thereby also fix the quantities $\sigma = \sigma_{\min}\left(\cB + \Sigma_{\cB}\right)$, $\norm{\cB + \Sigma_{\cB}}$, and
\begin{align*}
    \upsilon &= \min\{\gamma_G - \mu_G, \gamma_{F^*} - \mu_{F^*}\} = \frac12 \min\left\{\gamma_G - \mu_G, \gamma_{F^*} - \mu_{F^*}\right\} > 0.
\end{align*}

To obtain the best possible convergence rate, we fix the extrapolation parameter $\theta \in (0,1)$ and choose $\tau$ optimally as a function of $\theta$. 

Given $\theta \in (0,1)$, we set $\alpha = \tfrac{1}{\theta} - 1$ to optimize the convergence rate. This results in
$\theta = \tfrac{1}{1+\alpha}$ and a convergence rate of $\tfrac{1}{1+\eta}$. 

Since the reciprocal of $\theta$ appears frequently in the subsequent derivations, we define $\delta \defeq \theta^{-1} > 1$ and replace occurrences of $\tfrac{1}{\theta}$ with it.

The chosen value $\alpha > 0$ imposes upper bounds on the step size $\tau$ through the conditions in~\eqref{eq:conds-alpha}, given that $\gamma = (1+\alpha) \tau$ from \eqref{eq:def-tau-gamma}. Specifically, we have
\begin{equation*}
    \tau \leq \tfrac{\alpha}{1+\alpha} \min\left\{\tfrac{\mu_G - \Tilde{\mu}_G}{\mu_G \Tilde{\mu}_G}, \tfrac{\mu_{F^*} - \Tilde{\mu}_{F^*}}{\mu_{F^*} \Tilde{\mu}_{F^*}}\right\}.
\end{equation*}
Additionally, since $\tau < \norm{A-V}^{-1}$ must be satisfied according to \eqref{eq:def-tau-gamma}, we define an upper bound $\tau_S$ on $\tau$ as
$$\tau_S \defeq \tfrac{\delta - 1}{\delta}\min\left\{\tfrac{\mu_G - \Tilde{\mu}_G}{\mu_G \Tilde{\mu}_G}, \tfrac{\mu_{F^*} - \Tilde{\mu}_{F^*}}{\mu_{F^*} \Tilde{\mu}_{F^*}}, \tfrac{0,99 \delta}{(\delta-1) \norm{A-V}}\right\}.$$

Given the chosen parameters, we aim to minimize the convergence rate $\tfrac{1}{1+\eta}$, which requires maximizing the term $\eta$ as defined in \eqref{def:eta}. 

Since $\gamma = \delta \tau$, we maximize the term
\begin{equation*}
    \eta = \frac{4}{27} \min\bigg\{ \underbrace{\tfrac{\tau \delta \upsilon}{(2 \delta - 1)^2}}_{f_1(\tau)}, \underbrace{\tfrac{ \tau \delta \sigma}{4 \tau^2 \delta^2 \norm{\cB+\Sigma_{\cB}}^2 + (\delta-1+\tau \delta \max\{\tilde{\mu}_G,\tilde{\mu}_{F^*}\})^2}}_{f_2(\tau)}\bigg\}
\end{equation*}
over the remaining free parameter $\tau$. 

Clearly, $f_1$ grows linearly in $\tau$, while $f_2$ is a rational function of $\tau$. Both functions vanish at $\tau = 0$, as illustrated in Figure~\ref{fig:f1+f2}, which presents different function behaviors. 

Moreover, $f_2(\tau)$ increases for $0 \leq \tau \leq \zeta$ and decreases for $\tau > \zeta$, where
\begin{equation*}
    \zeta \defeq \sqrt{\tfrac{(\delta-1)^2}{\delta^2 (4 \norm{\cB+\Sigma_{\cB}}^2 + \max\{\tilde{\mu}_G^2,\tilde{\mu}_{F^*}^2\})}}.
\end{equation*}

\begin{figure}[h]
    \centering 
    \begin{tikzpicture}[xscale=2]
        \draw[->] (-0.2,0) -- (2,0) node[below]{$\tau$};
        \draw[->] (0,-0.2) -- (0,1.3);
        \begin{scope}
            \clip (-0.5,-0.5) rectangle (3,1);
            \draw[domain=0:2,thick,black!40] plot (\x,{0.5*\x});
            \draw[domain=0:2,thick,black!80] plot (\x,{0.5*\x/(\x^2*1+(1-\x*0.1)^2))});
        \end{scope}
        \node[black!40] at (1,1)[right]{\small $f_1(\tau)$};
        \node[black!80] at (1.7,0.2)[above]{\small $f_2(\tau)$};
    \end{tikzpicture}
    \begin{tikzpicture}[xscale=2]
        \draw[->] (-0.2,0) -- (2,0) node[below]{$\tau$};
        \draw[->] (0,-0.2) -- (0,1.3);
        \begin{scope}
            \clip (-0.5,-0.5) rectangle (3,1);
            \draw[domain=0:2,thick,black!40] plot (\x,{0.5*\x});
            \draw[domain=0:2,thick,black!80] plot (\x,{0.5*\x/(\x^2*0.8+(0.5-\x*0.1)^2))});
        \end{scope}
        \node[black!40] at (1,1)[right]{\small $f_1(\tau)$};
        \node[black!80] at (1.8,0.25)[above]{\small $f_2(\tau)$};
    \end{tikzpicture}
    \caption{Different possible behaviors of the component functions $f_1$ and $f_2$ in the definition of $\eta$.}
    \label{fig:f1+f2}
\end{figure}
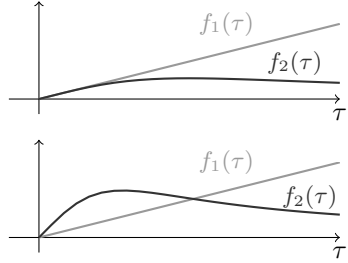

Accordingly, either $f_1(\tau) > f_2(\tau)$ for all $\tau > 0$, or $f_1$ and $f_2$ have an additional intersection point. If the origin is the only intersection, then 
$$\tau = \min\{\zeta, \tau_S\},$$
since we must not exceed the previously established upper bound $\tau_S$ for $\tau$.

Otherwise, the maximum value of $\eta$ occurs at an intersection point where $f_1(\tau) = f_2(\tau)$ or is determined by the value at the upper bound or the maximizer of $f_2$. The intersection points $\tau_{-}$ and $\tau_{+}$ are given by

\begin{align*}
    \tau_{\pm} =&\; \tfrac{ (1-\delta) \max\{\tilde{\mu}_G,\tilde{\mu}_{F^*}\}}{\delta \left(4 \norm{\cB+\Sigma_{\cB}}^2 + \max\{\tilde{\mu}_G^2,\tilde{\mu}_{F^*}^2\}\right)} \\
    &\pm \tfrac{\sqrt{ (\delta-1)^2 \max\{\tilde{\mu}^2_G,\tilde{\mu}^2_{F^*}\} - \left((\delta-1)^2-\tfrac{\sigma}{\upsilon}(2\delta-1)^2\right)\, \left(4 \norm{\cB+\Sigma_{\cB}}^2 + \max\{\tilde{\mu}_G^2,\tilde{\mu}_{F^*}^2\}\right)}}{\delta \left(4 \norm{\cB+\Sigma_{\cB}}^2 + \max\{\tilde{\mu}_G^2,\tilde{\mu}_{F^*}^2\}\right)}.
\end{align*}

Thus, if there is exactly one intersection point $\tau_+$ on the positive real half-axis, we set $\tau = \min\{\tau_+, \tau_S\}$, yielding $\eta = \tfrac{4}{27} f_1(\tau)$. However, if there are two intersection points on the positive real half-axis, we choose $\tau = \min\{\tau_+, \tau_S\}$ if $\zeta < \tau_+$, and otherwise, $\tau = \min\{\zeta, \tau_S\}$. The convergence rate then follows from the previous observations.

Consequently, we can summarize these results in a straightforward theorem for easy application.

\begin{theorem}\label{thm:pddrmm-linear-convergence-easy}
    Let $G$ be a $\gamma_G$-strongly convex function and $F^*$ a $\gamma_{F^*}$-strongly convex function such that
    $$\gamma_G \gamma_{F^*} >  \frac14 \norm{A-V}^2$$
    holds. Choose 
    \begin{align*}
    &\gamma_G > \Tilde{\mu}_G > \tfrac{\norm{A-V}}{2} \sqrt{\tfrac{\gamma_G}{\gamma_{F^*}}},
    &\mu_G = \tfrac{1}{2}\left(\gamma_G + \Tilde{\mu}_G \right), \\ &\gamma_{F^*} > \Tilde{\mu}_{F^*} > \tfrac{\norm{A-V}}{2} \sqrt{\tfrac{\gamma_{F^*}}{\gamma_{G}}}, & \mu_{F^*} = \tfrac{1}{2}\left(\gamma_{F^*} + \Tilde{\mu}_{F^*} \right)
    \end{align*}
    and define
    $$\cB_{\Sigma} = \begin{pmatrix} \Tilde{\mu}_G I & V^* \\ -A & \Tilde{\mu}_{F^*} I \end{pmatrix}$$
    as well as $\sigma = \sigma_{\min}(\cB_{\Sigma})$. \newline\noindent
    Choose the extrapolation parameter $\theta \in (0,1)$ and define $\delta = \theta^{-1}$. Furthermore, set
    \begin{align*}
        \zeta &= \tfrac{\delta-1}{\delta \sqrt{4\norm{\cB_{\Sigma}}^2 + \max\{\tilde{\mu}_G^2,\tilde{\mu}_{F^*}^2\}}}, \\
        \tau_S &= \tfrac{\delta - 1}{\delta}\min\left\{\tfrac{\mu_G - \Tilde{\mu}_G}{\mu_G \Tilde{\mu}_G}, \tfrac{\mu_{F^*} - \Tilde{\mu}_{F^*}}{\mu_{F^*} \Tilde{\mu}_{F^*}}, \tfrac{0.99 \delta}{(\delta-1) \norm{A-V}}\right\}, \\
        \upsilon &= \frac12 \min\left\{\gamma_G - \mu_G, \gamma_{F^*} - \mu_{F^*}\right\}
    \end{align*}
    and define $\tau_{-}$ and $\tau_{+}$ by
    \begin{align*}
    \tau_{\pm} =&\; \tfrac{ (1-\delta) \max\{\tilde{\mu}_G,\tilde{\mu}_{F^*}\}}{\delta \left(4\norm{\cB_{\Sigma}}^2 + \max\{\tilde{\mu}_G^2,\tilde{\mu}_{F^*}^2\}\right)} \\
    &\pm \tfrac{\sqrt{ (\delta-1)^2 \max\{\tilde{\mu}^2_G,\tilde{\mu}^2_{F^*}\} - \left((\delta-1)^2-\tfrac{\sigma}{\upsilon}(2\delta-1)^2\right)\, \left(4\norm{\cB_{\Sigma}}^2 + \max\{\tilde{\mu}_G^2,\tilde{\mu}_{F^*}^2\}\right)}}{\delta \left(4\norm{\cB_{\Sigma}}^2 + \max\{\tilde{\mu}_G^2,\tilde{\mu}_{F^*}^2\}\right)}.
    \end{align*}
    Now set
    \begin{equation*}
        \tau = \min\{\tau_S, \tilde{\tau}\} \quad\text{with}\quad \tilde{\tau} = \left\{ \begin{array}{ll}
            \zeta, & (\tau_{-},\tau_{+} \notin \RR) \lor (\tau_{-} > 0 \land \zeta > \tau_{+} > 0),\\
            \tau_+, & (\tau_{-} < 0) \lor (\tau_{-} > 0 \land \tau_{+} \geq \zeta > 0).
        \end{array} \right.
    \end{equation*}
    Then the sequence $\left(u^k\right)_{k\in \NN}$ with $u^k = (x^k, y^k)^T, k \in \NN,$ of the primal-dual Douglas-Rachford method with mismatched adjoint~\eqref{alg-part:pddr_mismatch} with step size $\tau$ and extrapolation parameter $\theta$ converges linearly to the unique fixed point $\hat{u} \in \left(\cA + \cB\right)^{-1}(0)$, satisfying
    \begin{equation*}
        \norm{u^N-\hat{u}} \in \mathcal{O}\left(\tfrac{1}{\left( 1+\eta \right)^N}\right)
    \end{equation*}
    with
    \begin{equation*}
        \eta = \frac{4\tau \delta}{27} \min\left\{\tfrac{\upsilon}{(2 \delta - 1)^2}, \tfrac{\sigma}{4 \tau^2 \delta^2 \norm{}^2 + (\delta-1+\tau \delta \max\{\tilde{\mu}_G,\tilde{\mu}_{F^*}\})^2} \right\}.
    \end{equation*}
\end{theorem}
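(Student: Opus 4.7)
The plan is to derive Theorem~\ref{thm:pddrmm-linear-convergence-easy} as a completely specified instantiation of Theorem~\ref{thm:pddrmm-linear-convergence}: first verify the hypotheses of the general theorem for the concrete parameter recipe, and then solve the one-dimensional optimization of the rate that remains.

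First I would verify the admissibility conditions of Lemma~\ref{lm:conds-alpha-monotony}. Multiplying the two prescribed lower bounds on $\tilde{\mu}_G$ and $\tilde{\mu}_{F^*}$ yields $\tilde{\mu}_G\tilde{\mu}_{F^*} > \tfrac14\norm{A-V}^2$, and the hypothesis $\gamma_G \gamma_{F^*} > \tfrac14\norm{A-V}^2$ guarantees that the intervals from which $\tilde{\mu}_G$ and $\tilde{\mu}_{F^*}$ are chosen are nonempty. The midpoint definition $\mu = \tfrac12(\gamma+\tilde{\mu})$ then automatically enforces $0 < \tilde{\mu} < \mu < \gamma$ for both indices, so all hypotheses of Lemma~\ref{lm:conds-alpha-monotony} hold. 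Setting $\alpha = \delta - 1 > 0$ makes $\theta(1+\alpha) = 1$, so the contraction factor from Theorem~\ref{thm:pddrmm-linear-convergence} collapses to $\tfrac{1}{1+\eta}$. The remaining requirement $\alpha \geq \tfrac{\tau}{c-\tau}$ of that theorem rearranges to $\tau \leq \tfrac{\alpha}{1+\alpha}c = \tfrac{\delta-1}{\delta}c$, and combining with $\tau < \norm{A-V}^{-1}$ (with the safety factor $0.99$) is exactly the definition of $\tau_S$; hence any $\tau \in (0,\tau_S]$ places us inside the hypotheses of Theorem~\ref{thm:pddrmm-linear-convergence}.

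With $\alpha$ fixed and $\gamma = \delta \tau$, the formula \eqref{def:eta} for $\eta$ reduces to $\tfrac{4}{27}\min\{f_1(\tau),f_2(\tau)\}$ where $f_1$ is linear and increasing and $f_2$ is a rational function that vanishes at $\tau = 0$ and at $\tau \to \infty$ and is unimodal with unique maximizer $\zeta$. The critical-point equation $D(\tau) = \tau D'(\tau)$, where $D$ denotes the denominator of $f_2$, simplifies algebraically to $(\delta-1)^2 = \tau^2 \delta^2 (4\norm{\cB_\Sigma}^2 + \max\{\tilde{\mu}_G^2,\tilde{\mu}_{F^*}^2\})$, which yields the stated formula for $\zeta$. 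The maximum of $\min\{f_1,f_2\}$ over $(0,\infty)$ is therefore realized either at $\zeta$, when $f_2 \leq f_1$ throughout the rising arm of $f_2$, or at a positive intersection $\tau_+$ of $f_1$ and $f_2$, when $f_1$ is the binding branch on an initial interval before $f_2$ takes over and then falls. Clearing denominators in $f_1(\tau) = f_2(\tau)$ produces a quadratic in $\tau$ whose roots are precisely the $\tau_\pm$ given in the statement.

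The final prescription $\tau = \min\{\tau_S, \tilde{\tau}\}$ then follows from a case distinction on the discriminant of that quadratic: when the discriminant is negative there is no positive intersection and $\tilde{\tau} = \zeta$; when it is real and nonnegative, the position of $\tau_+$ relative to $\zeta$ determines whether the binding-branch transition ($\tilde{\tau} = \tau_+$) or the peak of $f_2$ itself ($\tilde{\tau} = \zeta$) is the global maximizer. Inserting the resulting $\tau$ into $\eta$ gives the rate displayed in the statement, and the linear estimate on $\norm{u^N - \hat u}$ is immediate from Theorem~\ref{thm:pddrmm-linear-convergence} because $\lambda = \theta(1+\alpha) = 1$ collapses the general contraction factor $\tfrac{1+(1-\lambda)\eta}{1+\eta}$ to $\tfrac{1}{1+\eta}$. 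The main obstacle is the bookkeeping in this one-variable optimization, especially verifying that the displayed boolean conditions on $\tau_\pm$ and $\zeta$ correctly single out, in every regime, the unique global maximizer on $(0,\tau_S]$ of the piecewise-defined minimum of $f_1$ and $f_2$.
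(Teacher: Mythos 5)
Your proposal is correct and follows essentially the same route as the paper: it instantiates Theorem~\ref{thm:pddrmm-linear-convergence} with $\alpha=\delta-1$ (so that $\theta(1+\alpha)=1$ collapses the rate to $\tfrac{1}{1+\eta}$), verifies the hypotheses of Lemma~\ref{lm:conds-alpha-monotony} from the prescribed interval choices, derives $\tau_S$ from $\alpha\geq\tfrac{\tau}{c-\tau}$ together with $\tau<\norm{A-V}^{-1}$, and then maximizes $\tfrac{4}{27}\min\{f_1,f_2\}$ via the maximizer $\zeta$ of $f_2$ and the intersection roots $\tau_{\pm}$, exactly as in the paper's ``Choice of Parameters'' discussion. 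The bookkeeping in the final case distinction that you flag as the main obstacle is also left at the same informal level in the paper itself, so there is no gap relative to the paper's own argument.
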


This theorem thus provides us with a straightforward way to determine step sizes that ensure linear convergence. Of course, one could still optimize the convergence rate by selecting $\theta$ appropriately. However, to maintain clarity, we will refrain from doing so at this point. The optimization itself is relatively simple, albeit tedious, and involves little more than the Cardano formulas.

The question that still remains is whether we can achieve convergence without requiring the strong convexity of both functions. The theoretical foundation on which the proof is built does not yield a corresponding result. In this theory, the presence of monotone operators is crucial for the applicable theorems. A key component of the approach in the previous discussion was the analysis of the operator
$$\mathbb{A}_{\alpha} = \left[\begin{array}{cccccc}
\alpha I + \gamma \partial G & 0 & -I & 0 & -I & 0 \\ 
0 & \alpha I + \gamma \partial F^{*} & 0 & -I & 0 & -I \\
I & 0 & 0 & 0 & -I & 0 \\
0 & I & 0 & 0 & 0 & -I \\
(1-2\alpha) I & 0 & I & 0 & \alpha I & \gamma V^* \\
0 & (1-2\alpha) I & 0 & I & - \gamma A & \alpha I
\end{array}\right].$$
By carefully choosing the parameters, we ensured that this operator is monotone in the preceding theorems and lemmas. This allowed us to conclude convergence from Theorems~\cite[Th. 2.14]{naldi} and~\cite[Th. 2.19]{naldi}. However, without assuming strong convexity of both functions, $M^{-1} \mathbb{A}_{\alpha}$ is not $M$-monotone, as can be seen from the vector $v = (a,b,a,b,a,b)^T$ and 
\begin{align*}
    \scp{v}{\mathbb{A}_{\alpha}v} =  \alpha \scp{a}{\partial G(a)} - \gamma \scp{(A-V)a}{b} + \alpha \scp{b}{\partial F^*(b)}.
\end{align*}
Furthermore, Example~\ref{pddr:counter-example} illustrates that without the assumption of strong convexity for both functions, the primal-dual Douglas-Rachford method with mismatched adjoint may not converge to a fixed point of the iteration, even if one exists.

\section{Numerical Experiments}
\label{sec:numerical-experiments}
\subsection{Convex Quadratic Problems}
\label{sec:pddr-ex-quadratic}

As an example to visualize the proven results, we examine convex quadratic problems of the form

\begin{align*}
  \min_{x\in\RR^{n}}\tfrac{\alpha}{2}\norm{x}_{2}^{2} + \tfrac{1}{2\beta}\norm{Ax-z}_{2}^{2}
\end{align*}
with $\alpha,\beta>0$, $A\in\RR^{m\times n}$, and $z\in\RR^{m}$. Defining $G(x) = \tfrac\alpha2\norm{x}_{2}^{2}$ and $F^{*}(y) = \tfrac\beta2\norm{y}_{2}^{2} + \scp{y}{z}$, the corresponding proximal operators are given by
\begin{align*}
  \prox_{\tau G}(x) = \tfrac{x}{1+\tau\alpha} \quad \text{and} \quad \prox_{\tau F^{*}}(y) = \tfrac{y-\tau z}{1+\tau\beta}.
\end{align*}
Furthermore, $G$ is strongly convex with constant $\gamma_{G} = \alpha$ and $F^{*}$ is strongly convex with constant $\gamma_{F^{*}} = \beta$. Therefore, for $\alpha,\beta>0$, we can use Theorem~\ref{thm:pddrmm-linear-convergence-easy} to determine a valid step size $\tau$ for a given choice of $\theta$.

The resulting algorithm is:
\begin{align}\label{eq:pddrmm-quadratic}
    \begin{split}
    x^{k+1} &= \tfrac{p^k}{1+\tau\alpha}, \\
    y^{k+1} &= \tfrac{q^{k}-\tau z}{1+\tau \beta}, \\
    \begin{bmatrix}v^{k+1} \\ w^{k+1} \end{bmatrix} &= \begin{bmatrix} I & \tau V^* \\ - \tau A & I \end{bmatrix}^{-1} \begin{bmatrix} 2 x^{k+1} - p^k \\ 2 y^{k+1} - q^k \end{bmatrix}, \\
    p^{k+1} &= p^k + \theta \left( v^{k+1} - x^{k+1} \right), \\
    q^{k+1} &= q^k + \theta \left( w^{k+1} - y^{k+1} \right).
    \end{split}
\end{align}
For our numerical experiment, we again set $n=400$, $m=200$, a random matrix $A\in\RR^{m\times n}$, and a perturbation $V\in\RR^{m\times n}$ by adding a small random matrix to $A$, i.e.,
\begin{align*}
  V = A + E\ \text{ with }\ \norm{E}\leq \eta.
\end{align*}
We numerically verify the condition $\gamma_{G}\gamma_{F^{*}}>\tfrac14 \norm{A-V}^{2}$ and use Theorem~\ref{thm:pddrmm-linear-convergence-easy} to determine step sizes that guarantee convergence.

The unique fixed points of the iteration are:
\begin{equation}
  \label{eq:pdrmmm-quadratic-fp-mm}
  \begin{split}
  \hat x & = \Big(\alpha I + \tfrac{1}{\beta}V^{T}A\Big)^{-1}(\tfrac1\beta V^{T}z) = V^T(\alpha\beta I + AV^{T})^{-1}z,\\
  \hat y & = -(\beta I + \tfrac{1}{\alpha}AV^{T})^{-1}z = -\alpha(\alpha\beta I + AV^{T})^{-1}z,
  \end{split}
\end{equation}
while the true primal solution is:
\begin{align}
  \label{eq:pddrmm-quadratic-fp}
  x^{*} = \Big(\alpha I + \tfrac{1}{\beta}A^{T}A\Big)^{-1}(\tfrac1\beta A^{T}z)  = A^T(\alpha\beta I + AA^{T})^{-1}z.
\end{align}
For the numerical analysis, we set $\alpha = \gamma_G=0.15$ and $\beta = \gamma_{F^{*}}=1$ in Theorem~\ref{thm:pddrmm-linear-convergence-easy}. Additionally, we choose $\theta = 0.5$. This leads to the step size $\tau \approx 0.2$ using:
$$\tilde{\mu}_G = \frac12 \left(\gamma_G + \tfrac{\norm{A-V}}{2} \sqrt{\tfrac{\gamma_G}{\gamma_{F^*}}}\right)\quad\text{ and }\quad\tilde{\mu}_{F^*} = \frac12 \left(\gamma_{F^*} + \tfrac{\norm{A-V}}{2} \sqrt{\tfrac{\gamma_{F^*}}{\gamma_{G}}}\right).$$
Figure~\ref{fig:pddrmm-quadratic_linear_conv} presents the results for the mismatched adjoint method. As expected, we observe linear convergence toward the fixed point $\hat{x}$, and the iterates reach the predicted error bound relative to the true minimizer $x^{*}$, as given by Theorem~\ref{thm:error-estimate}.

\begin{figure}[H]
\centering
\includegraphics[width=6cm]{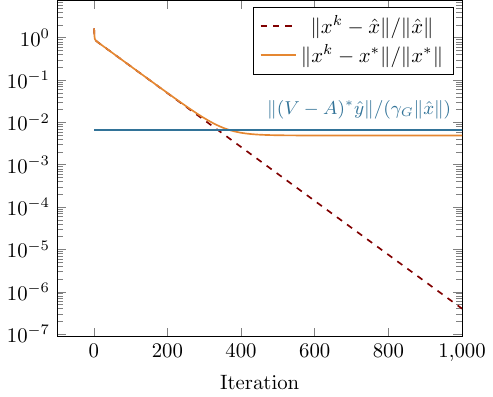} 
\caption{Convergence of the iteration~\eqref{eq:pddrmm-quadratic}. Here, $\hat x$ is the fixed point~\eqref{eq:pdrmmm-quadratic-fp-mm} of the iteration with the incorrect adjoint, and $x^{*}$ is the original primal solution~\eqref{eq:pddrmm-quadratic-fp}. The solid orange line represents the relative distance of the primal iterates $x^{k}$ from the fixed point of the iteration, while the dashed purple line represents the relative distance of the iterates $x^{k}$ from the original primal solution. As predicted, the latter distance remains below the bound given in Theorem~\ref{thm:error-estimate}.}
\label{fig:pddrmm-quadratic_linear_conv}
\end{figure}
Additionally, we can compare the Chambolle-Pock method with mismatched adjoint from~\cite{cp_mismatched} with the adapted primal-dual Douglas-Rachford method with mismatched adjoint. The iteration for the adapted method is given by:
\begin{align}\label{eq:adapted-pddrmm-quadratic}
    \begin{split}
    x^{k+1} &= \tfrac{p^k}{1+\tau (\alpha- \mu_G)}, \\
    y^{k+1} &= \tfrac{q^{k}-\tau z}{1+\tau (\beta-\mu_{F^*})}, \\
    \begin{bmatrix}v^{k+1} \\ w^{k+1} \end{bmatrix} &= \begin{bmatrix} (1+\tau \mu_G)I & \tau V^* \\ - \tau A & (1+\tau \mu_{F^*}) I \end{bmatrix}^{-1} \begin{bmatrix} 2 x^{k+1} - p^k \\ 2 y^{k+1} - q^k \end{bmatrix}, \\
    p^{k+1} &= p^k + \theta \left( v^{k+1} - x^{k+1} \right), \\
    q^{k+1} &= q^k + \theta \left( w^{k+1} - y^{k+1} \right).
    \end{split}
\end{align}
We set the parameters $\alpha, \beta, \theta$, and $\tau$ exactly as before. Furthermore, we set the parameters $\mu_G$ and $\mu_{F^*}$ of the adapted method equal to the previously used $\tilde{\mu}_G$ and $\tilde{\mu}_{F^*}$, respectively.

Figure~\ref{fig:adapted-pddrmm-quadratic_linear_conv} shows the results of the adapted method. As expected, it converges to the same limit, and the convergence rates of both methods do not differ significantly in this example.
\begin{figure}[H]
\centering
\includegraphics[width=6cm]{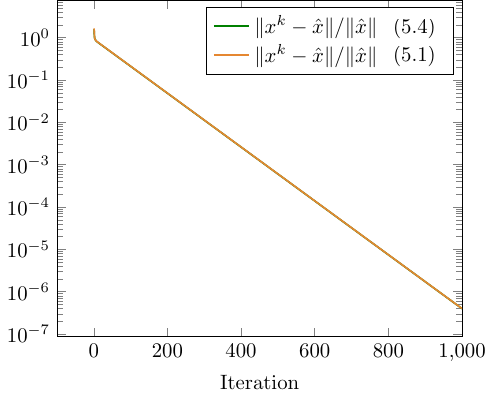} 
\caption{Comparison of the convergence of iterates in the adapted and non-adapted primal-dual Douglas-Rachford method with mismatched adjoint to the fixed point. The green line shows the relative distance of the iterates of the adapted method to the fixed point, while the orange line represents the relative distance of the iterates of the non-adapted primal-dual Douglas-Rachford method to the fixed point. In this example, the convergence speed is nearly identical.}
\label{fig:adapted-pddrmm-quadratic_linear_conv}
\end{figure}
The comparison with the Chambolle-Pock method with mismatched adjoint from \cite{cp_mismatched} is visualized in Figure~\ref{fig:cp-pddrmm-quadratic_linear_conv}. It is evident that the primal-dual Douglas-Rachford method with mismatched adjoint converges significantly faster. However, we do have to mention that this is only of practical relevance, if the structures of $A$ and $V$ allow a fast computation of the solution of the linear system.
\begin{figure}[H]
\centering
\includegraphics[width=6cm]{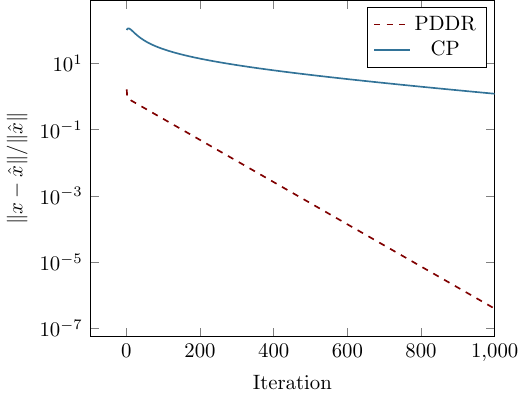} 
\caption{Comparison of the convergence of iterates in the primal-dual Douglas-Rachford method with mismatched adjoint versus the Chambolle-Pock method with mismatched adjoint. The blue line represents the relative distance between the fixed point and the iterates of the Chambolle-Pock method, while the red line represents the relative distance of the iterates of the primal-dual Douglas-Rachford method with mismatched adjoint.}
\label{fig:cp-pddrmm-quadratic_linear_conv}
\end{figure}

\subsection{Computerized tomography}
\label{sec:computerized-tomography}
To illustrate the impact on one of the standard examples of inverse problems, we consider the problem of computerized tomography (CT)~\cite{buzug2008computed}. In computerized tomography we aim to reconstruct a slice of an object from x-ray measurements taken in different directions. These measurements are stored as a sinogram and the map of the slice to the sinogram is modeled by a linear map: the Radon transform or forward projection. Its adjoint is called backprojection. There exist various inversion formulas, which express the inverse of the Radon transform explicitly, but since the Radon transform is compact (when modeled as a map between the right function spaces~\cite{natterer2001mathematics}), any inversion formula has to be unstable. A popular stable, approximate inversion method is the so called filtered backprojection (FBP)~\cite{buzug2008computed}. The method gives good approximate reconstruction when the number of projections is high and when the data is not too noisy. However, the quality of the reconstruction quickly decreases when the number of projections decreases. But as this lowers the x-ray doses, there are numerous efforts to increase reconstruction quality from only a few projections. A successful approach uses total variation (TV) regularization~\cite{sidky2012convex}. The respective minimization problem can then be solved with the primal-dual Douglas-Rachford method. However, there are many ways to implement the forward and the backward projection. In applications it sometimes happens that a pair of forward and backward projections are chosen that are not adjoint to each other, either because this importance of adjointness is not noted, or on purpose to enhance the speed of computation or to achieve a certain reconstruction quality, see also~\cite{zeng2000unmatched,Palenstijn2011PerformanceIF,Chouzenout2021pgm-adjoint,Lorenz2018TheRK}. 

In our experiment, we describe a discrete image with $m\times n$ pixels as $x\in\RR^{m\times n}$. Its discrete gradient $\nabla x = u\in\RR^{m\times n\times 2}$ is a tensor and for such a tensor we define the pixel-wise absolute value in the pixel $(i_{1},i_{2})$ as $\abs{u}_{i_{1},i_{2}}^2 = \sum_{k=1}^2 u_{i_{1},i_{2},k}^2$, cf.~\cite[p. 416]{Bredies2019MathematicalIP}. For images $x\in\RR^{m\times n}$ we denote by 
$$\norm{x}_{p} = \left( \sum_{i_{i},i_{2}}\abs{x}_{i_{1},i_{2}}^{p} \right)^{1/p}$$ the usual pixel-wise $p$-norm.
With $R$ we denote the discretized Radon transform taking a $m\times n$-pixel image to a sinogram of size $s\times t$. 
We aim to solve the problem
\[
\min _{x \in  \mathbb{R}^{m \times n}} \frac{\lambda_0}{2}\|R x-z\|_{2}^{2}+ \frac{\lambda_1}{2} \norm{\abs{\nabla x}}_{1}+ \frac{\lambda_2}{2} \norm{x}_2^2
\]
for a given sinogram $z$ and constants $\lambda_{0},\lambda_1,\lambda_{2}>0$.
This can be expressed as the saddle point problem
\[
\min_{x \in \mathbb{R}^{m \times n}} \max _{\substack{p \in \mathbb{R}^{m \times n \times 2}\\ q \in \mathbb{R}^{s \times t}}}- \langle x, \operatorname{div} p\rangle + \langle R x-z, q\rangle - \frac{1}{2 \lambda_0} \norm{q}^2 - I_{\norm{\cdot}_{\infty,2} \leq \lambda_1}(p) + \frac{\lambda_2}{2} \norm{x}^2.
\]
With $F^*(q,p) = 
\tfrac{1}{2 \lambda_0}\|q\|_{2}^{2} + \langle q, z \rangle + I_{\norm{\cdot}_{\infty,2} \leq \lambda_1}(p)$, $G(x) = \tfrac{\lambda_2}{2} \norm{x}_2^2$ and $A =
\left(\begin{array}{l}
R \\
\nabla
\end{array}\right)
$ the saddle point formulation is exactly of the form~(\ref{eq:minFA+G}).
The function $G$ is strongly convex, however, $F^{*}$ is not. Hence, we regularize further by adding $\epsilon\|p\|_{2}^{2}/2$ with $\epsilon>0$ to $F^{*}$ which amounts to a Huber-smoothing of the total variation term in the primal problem. Therefore the proximal mappings are given by
\begin{equation}\label{eq:prox_G}
    \prox_{\tau G}(x) = \frac{x}{1+\tau \lambda_2}
\end{equation}
and
\begin{equation}\label{eq:prox_F}
    \begin{split}
        \prox_{\tau F^*}(q,p) &= \begin{bmatrix}
            \prox_{\tau \left(\frac{1}{2 \lambda_0}\|\cdot\|_{2}^{2} + \langle \cdot, z \rangle\right)}(q) \\
            \prox_{\tau I_{\norm{\cdot}_{\infty, 2} \leq \lambda_1}}(p)
        \end{bmatrix} \\
        &= \begin{bmatrix}
            \prox_{\frac{\tau}{2 \lambda_0} \|\cdot\|_{2}^{2}}\left(q - \tau z\right) \\
            \prox_{\frac{\tau}{1+\tau \epsilon} I_{\norm{\cdot}_{\infty, 2} \leq \lambda_1}}\left(\tfrac{p}{1+\tau \epsilon}\right)
        \end{bmatrix} \\
        &= \begin{bmatrix}
            \frac{(q-\tau z)}{\left(1+ \tfrac{\tau}{\lambda_0}\right)} \vspace{0.1cm}\\
            \proj_{\norm{\cdot}_{\infty, 2} \leq \lambda_1}\left(\tfrac{p}{1+\tau \epsilon}\right)
        \end{bmatrix}.
    \end{split}
\end{equation}

In the experiment we want to recover the famous Shepp Logan phantom $\hat{x}$ with $400 \times 400$ pixels from measurement with just $40$ equispaced angles and $400$ bins for each angle and a parallel bean geometry, and added 15\% relative Gaussian noise. Hence, the resulting sinogram $z$ is of the shape $40 \times 400$. To implement the mismatch we used non-adjoined implementations of the forward and back-projection. As the forward operator $A$ we use the parallel strip beam projector from the Astra toolbox~\cite{van2016fast} and introduce an adjoint mismatch with the backward projection $V^{*}$ the adjoint of the parallel line beam projector, so that
$$\norm{A-V} \approx 0.2945.$$ All experiments are done in Python 3.12

\paragraph{Existence and Uniqueness of Fixed Points.}
To verify the existence of a fixed point for the primal-dual Douglas-Rachford method with mismatched adjoint, we apply Theorem~\ref{thm:existenceoffixedpoints}. Since both \(G\) and \(F^*\) are proper, strongly convex, and lower semicontinuous functions, and \(A\) and \(V^*\) are bounded linear operators, it remains to verify the inequality~\eqref{eq:conditionexistencefixedpoints}, which in our specific case is given by  
\[
\lambda_2 \min\{\epsilon, \lambda_0^{-1}\} > \frac14 \norm{A-V}^2.
\]
With the choice of \(\lambda_0 = 10\), \(\lambda_2 = 2\), and \(\epsilon = 0.1\), the existence of unique fixed points for the primal-dual Douglas-Rachford method with mismatched adjoint is thus ensured.

\paragraph{Numerical Observations.}
To compute the step sizes for the methods with an mismatched adjoint, we set  
\begin{align*}
\tilde{\mu}_G &= \frac12 \left(\gamma_G + \frac{\norm{A-V}}{2} \sqrt{\frac{\gamma_G}{\gamma_{F^*}}}\right), \;\,\qquad \mu_G = \frac12 \left(\gamma_G+\tilde{\mu}_G\right), \\
\tilde{\mu}_{F^*} &= \frac12 \left(\gamma_{F^*} + \frac{\norm{A-V}}{2} \sqrt{\frac{\gamma_{F^*}}{\gamma_{G}}}\right), \qquad \mu_{F^*} = \frac12 \left(\gamma_{F^*}+\tilde{\mu}_{F^*}\right).
\end{align*}
We choose the relaxation parameter \(\theta = \frac12\) and obtain the step size \(\tau = 0.01965\) using Theorem~\ref{thm:pddrmm-linear-convergence-easy}.  

For solving the inner problem, we use the stochastic gradient descent method with estimated adjoint~\cite{randomdescent}, which we terminate either after 10,000 iterations or when the residual falls below the threshold of \(10^{-6}\). Additionally, we fix \(\lambda_1 = 6\) and keep the remaining parameters of the optimization problem unchanged.  

Comparing the adapted and non-adapted versions of the primal-dual Douglas-Rachford method with mismatched adjoint with its equivalent with a correct adjoint, we obtain the following result: As seen in Figure~\ref{fig:vergleich-pddr}, the adapted primal-dual Douglas-Rachford method with mismatched adjoint essentially behaves like the method with an exact adjoint and converges to a fixed point that is similarly close to the original image. This can also be observed in Figure~\ref{fig:bilder-pddr}. The adapted and non-adapted versions of the primal-dual Douglas-Rachford method with mismatched adjoint share the same fixed point but converge at different speeds. Compared to the Chambolle-Pock method with mismatched adjoint~\cite{cp_mismatched}, no clear advantage is observed for the primal-dual Douglas-Rachford method with mismatched adjoint. However, we like to mention that the condition $\gamma_G \gamma_F^* > 2 \norm{A-V}^2$ for the Chambolle-Pock method with mismatched adjoint to provably function is eight times higher than the bound provided for the convergence of the primal-dual Douglas-Rachford method with an adjoint mismatch. Additionally, noting that the duration of an iteration strongly depends on the speed of solving the linear system, the Chambolle-Pock method with mismatched adjoint may be advantageous when no further information about the structure of the linear mappings is available.

\begin{figure}
    \centering
    \includegraphics[width=0.45\textwidth]{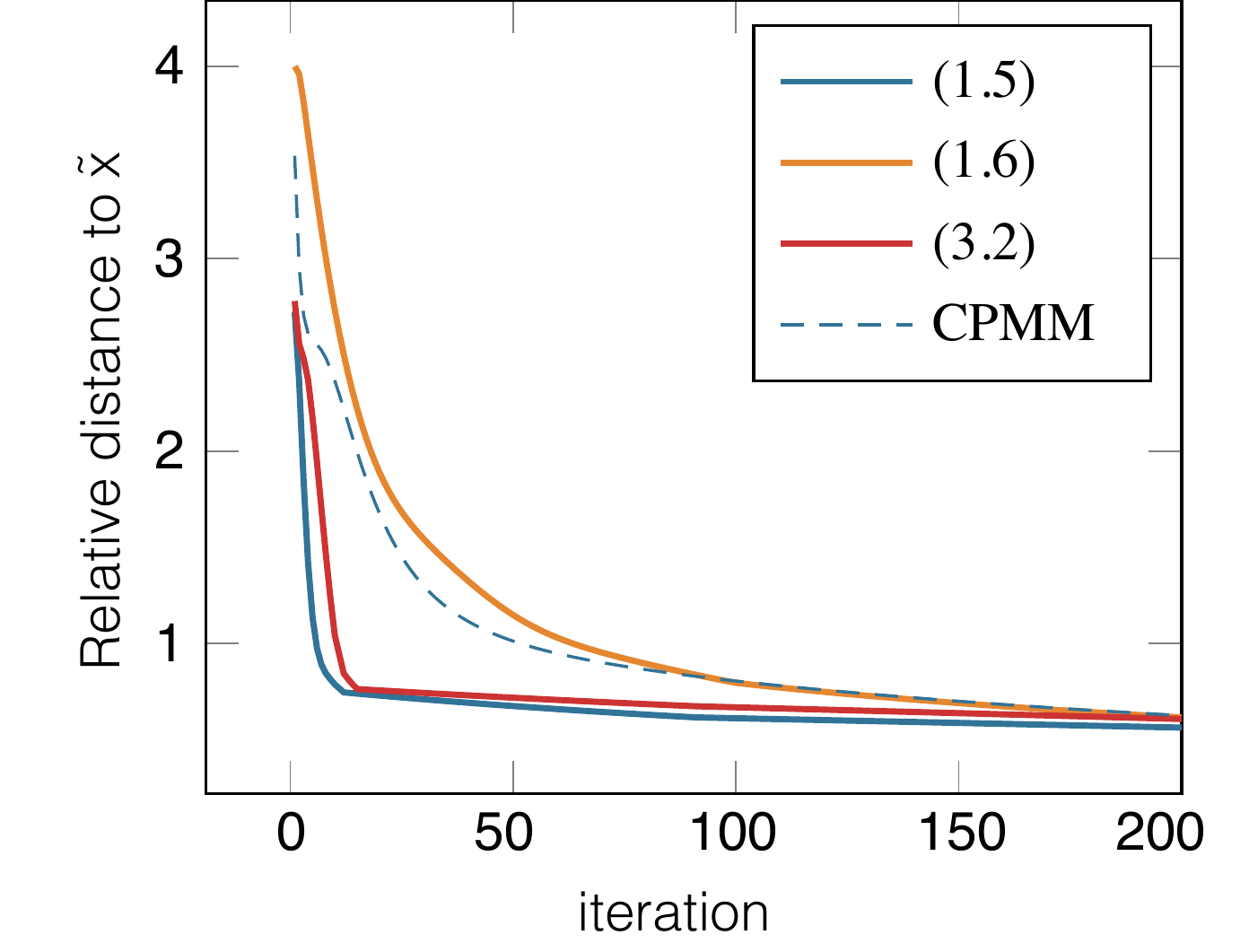}\quad
    \includegraphics[width=0.45\textwidth]{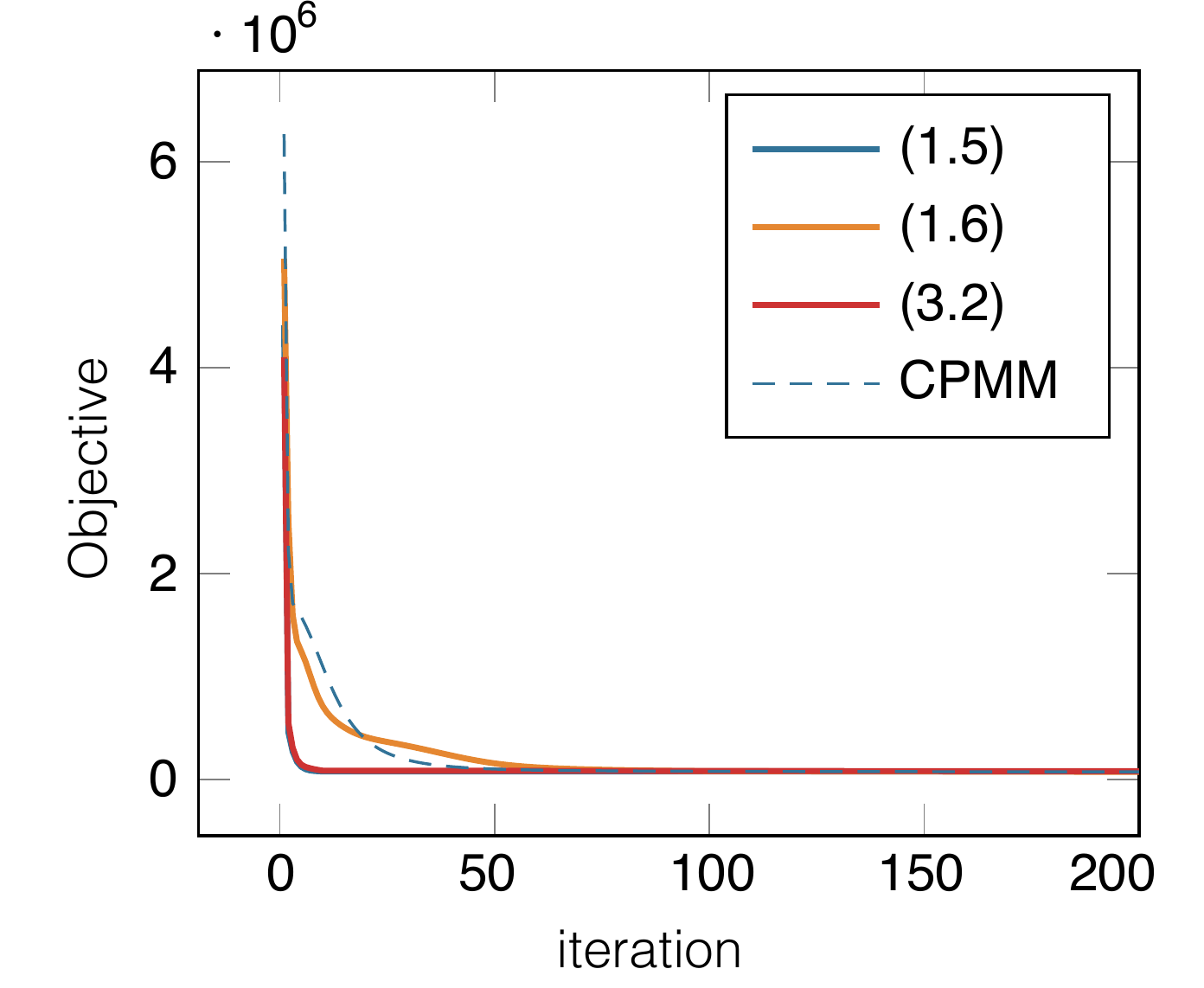}
    \caption{Comparison of the primal-dual Douglas-Rachford method (blue line) with the (adapted) primal-dual Douglas-Rachford method with mismatched adjoint. The adapted version is shown in red, and the non-adapted version in orange. As another reference, the Chambolle-Pock method with mismatched adjoint is represented by the dashed blue line. \newline\noindent Left: Relative distance to \(\tilde{x}\) over the iterations. \newline\noindent  Right: Decrease in the value of the primal objective function over the iterations.}
    \label{fig:vergleich-pddr}
\end{figure}

\begin{figure}[htb]
\centering\scriptsize
\begin{tabular}{rcccc}
&  Original & Sinogram & \shortstack{Reconstruction \\ with FBP} & \\
& \includegraphics[width=1.75cm]{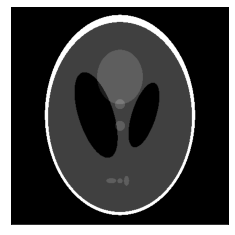} &
 \includegraphics[height=1.75cm, width=1.75cm]{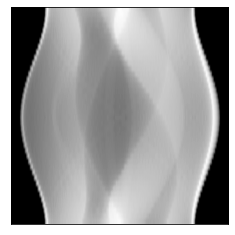} & \includegraphics[width=1.75cm]{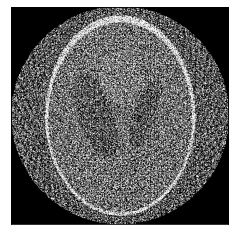} & 
 \\
 & \shortstack{Reconstruction \\ with \eqref{alg-part:pddr}}   &  \shortstack{Reconstruction Error \\ with \eqref{alg-part:pddr}} & \shortstack{Reconstruction  \\ with \cite{cp_mismatched}} & \shortstack{Reconstruction Error  \\ with \cite{cp_mismatched}} \\
& \includegraphics[height=1.75cm]{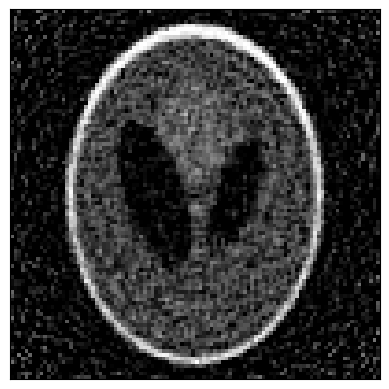}& 
\includegraphics[height=1.75cm]{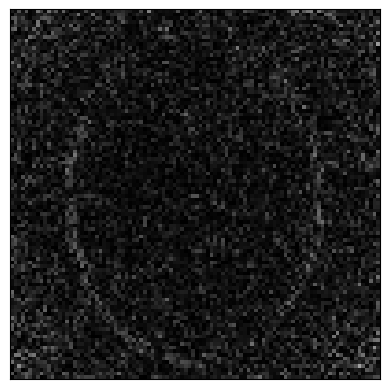} & 
\includegraphics[height=1.75cm]{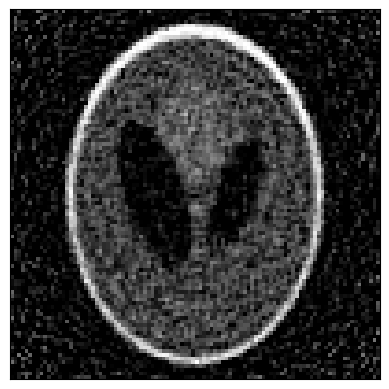} &
\includegraphics[height=1.75cm]{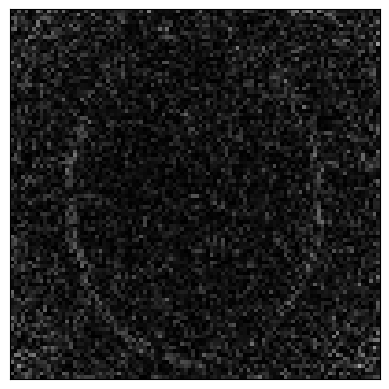} \\
& \shortstack{Reconstruction \\ with \eqref{alg:adapted-pddr}}   &  \shortstack{Reconstruction Error \\ with \eqref{alg:adapted-pddr}} & \shortstack{Reconstruction  \\ with \eqref{alg-part:pddr_mismatch}} & \shortstack{Reconstruction Error  \\ with \eqref{alg-part:pddr_mismatch}} \\
&
\includegraphics[height=1.75cm]{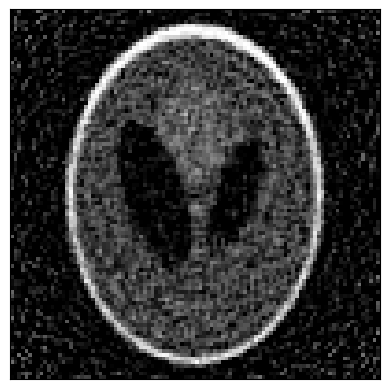}& 
\includegraphics[height=1.75cm]{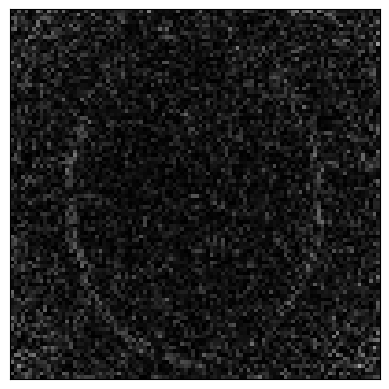} & 
\includegraphics[height=1.75cm]{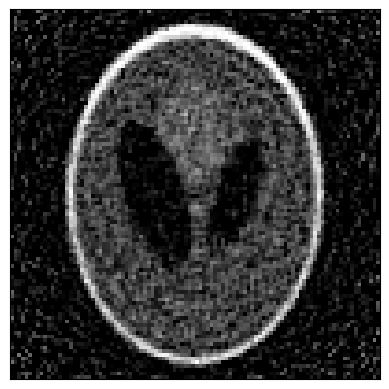} &
\includegraphics[height=1.75cm]{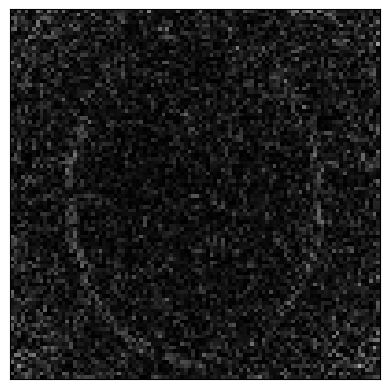} \\
 
\end{tabular}
\caption{Reconstruction of the Shepp-Logan phantom. Top Right: Reconstruction with the Filtered Backprojection. Middle left: Reconstruction using the primal-dual Douglas-Rachford method with an exact adjoint. Middle right: Reconstruction using the Chambolle-Pock method with mismatched adjoint. Bottom left: Reconstruction using the adapted primal-dual Douglas-Rachford method with mismatched adjoint. Bottom right: Reconstruction using the primal-dual Douglas-Rachford method with mismatched adjoint. All images use a fixed grayscale range with values from \(0.0\) to \(1.0\).}
\label{fig:bilder-pddr}
\end{figure}

\paragraph{Comparison to Alternative Methods with Mismatched Adjoint.}
Another aspect of interest is the comparison between the (adapted) primal-dual Douglas-Rachford method with alternative approaches. Therefore, we compare these three algorithms with the Condat-Vũ method and the Loris-Verhoeven method with mismatched adjoint from~\cite{Chouzenoux2023ConvergenceRF}. While only weak convergence results for specific objective functions have been proven for these methods so far, the objective function used in our experiment falls within the scope of their analysis.  

For the Condat-Vũ method with mismatched adjoint we take \(\tau = 0.0001\), \(\sigma = 0.25\), and \(\theta = 1\) to obtain valid step sizes and relaxation parameters according to~\cite[Cor. 3.8]{Chouzenoux2023ConvergenceRF}.  

Similarly, for the iteration of the Loris-Verhoeven method with mismatched adjoint we again take \(\tau = 0.0001\), \(\sigma = 0.25\), and \(\theta = 1\) to obtain valid step sizes and relaxation parameters according to~\cite[Prop. 4.2]{Chouzenoux2023ConvergenceRF}. Since both methods converge more slowly in this experiment than the previously analyzed methods, we performed 10,000 iterations of each algorithm. The resulting reconstructions are shown in Figure~\ref{fig:bilder-vergleich}. We observe that all methods produce similar results. However, it is worth noting that the error in the result of the Condat-Vũ method with mismatched adjoint is larger at the object's boundary compared to the reconstructions obtained from the other methods.  

\begin{figure}[htb]
\centering\scriptsize
\begin{tabular}{rcccc}
&  Original & Sinogram & \shortstack{Reconstruction \\ with FBP} & \\
& \includegraphics[width=1.75cm]{figures/fig2_01.png} &
 \includegraphics[height=1.75cm, width=1.75cm]{figures/fig2_02.png} & \includegraphics[width=1.75cm]{figures/fig2_03.png} &  
 \\
 & \shortstack{Reconstruction \\ with \eqref{alg-part:pddr_mismatch}} & \shortstack{Reconstruction Error  \\ with \eqref{alg-part:pddr_mismatch}} & \shortstack{Reconstruction \\ with \eqref{alg:adapted-pddr}}   &  \shortstack{Reconstruction Error \\ with \eqref{alg:adapted-pddr}}  \\
 &
\includegraphics[height=1.75cm]{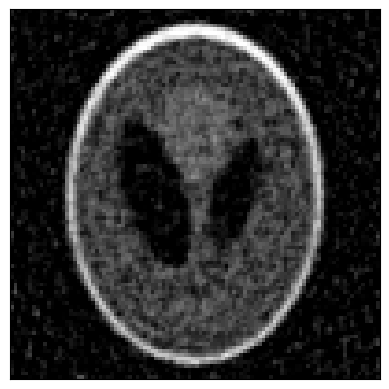}& 
\includegraphics[height=1.75cm]{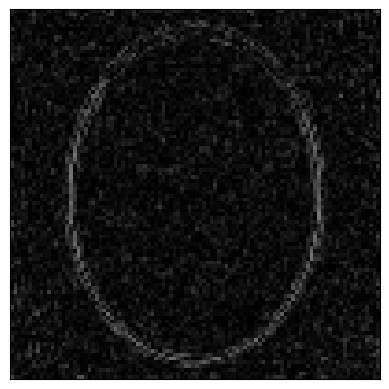} & 
\includegraphics[height=1.75cm]{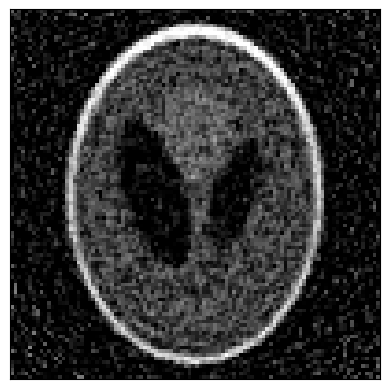} &
\includegraphics[height=1.75cm]{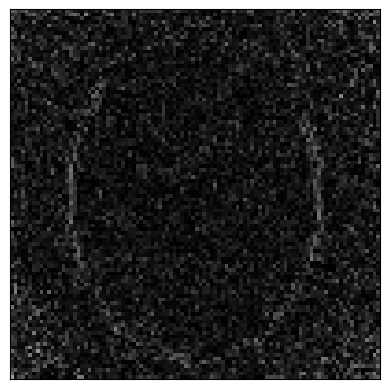} \\
 & \shortstack{Reconstruction \\ with CV}   &  \shortstack{Reconstruction Error \\ with CV} & \shortstack{Reconstruction  \\ with LV} & \shortstack{Reconstruction Error  \\ with LV}  \\
& 
\includegraphics[height=1.75cm]{figures/image_pddrm.png} &
\includegraphics[height=1.75cm]{figures/image_pddrm_diff.png} & 
\includegraphics[height=1.75cm]{figures/image_apddr.png}& 
\includegraphics[height=1.75cm]{figures/image_apddr_diff.png} 
\\
\end{tabular}
\caption{Reconstruction of the Shepp-Logan phantom with reconstruction error. Top Right: Reconstruction with the Filtered Backprojection. Middle left: Reconstruction using the primal-dual Douglas-Rachford method with mismatched adjoint. Middle right: Reconstruction using the adapted primal-dual Douglas-Rachford method with mismatched adjoint. Bottom left: Reconstruction using the Condat-Vũ method with mismatched adjoint (CV). Bottom center: Reconstruction using the Loris-Verhoeven method with mismatched adjoint (LV). All images use a fixed grayscale scale with values from \(0.0\) to \(1.0\).}
\label{fig:bilder-vergleich}
\end{figure}

In Figure~\ref{fig:vergleich-all}, we visualize the distance to the Shepp-Logan phantom as well as the value of the objective function to be minimized over the first 1,000 iterations, where applicable. We observe that the adapted primal-dual Douglas-Rachford method with mismatched adjoint is close to the original image after only a few iterations. Meanwhile, the Chambolle-Pock method with mismatched adjoint and the primal-dual Douglas-Rachford method with mismatched adjoint produce similar results. Notably, the methods analyzed in this paper converge significantly faster in this example than the Condat-Vũ method and the Loris-Verhoeven method with mismatched adjoint, for which no step size strategy is provided in~\cite{Chouzenoux2023ConvergenceRF}.  

\begin{figure}
    \centering
    \includegraphics[width=0.9\textwidth]{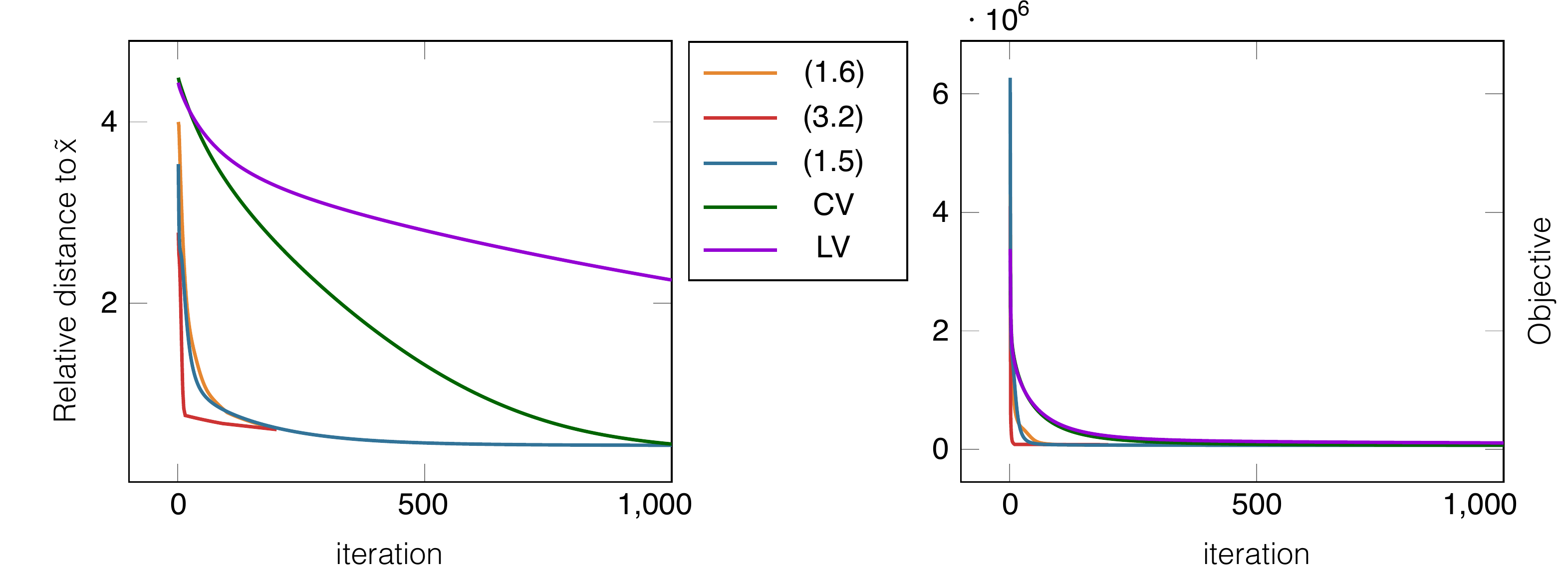}
    \caption{Comparison of different primal-dual methods with mismatched adjoint. The results of the primal-dual Douglas-Rachford method with mismatched adjoint (orange line), the adapted primal-dual Douglas-Rachford method with mismatched adjoint (red line), the Chambolle-Pock method with mismatched adjoint (blue line), the Condat-Vũ method with mismatched adjoint (green line), and the Loris-Verhoeven method with mismatched adjoint (violet line) are visualized. \newline\noindent 
    Left: Relative distance to \(\tilde{x}\) over the iterations. \newline\noindent 
    Right: Decrease in the value of the primal objective function over the iterations.}
    \label{fig:vergleich-all}
\end{figure}

\section{Conclusion}
\label{sec:conclusion}
In this paper we studied the influence of an adjoint mismatch on the primal-dual Douglas-Rachford method. Under mild conditions, we established a method that preserves the decomposition into two monotone operators and hence is covered by previous results. 
Additionally we studied the convergence of the primal-dual Douglas-Rachford method under the influence of an adjoint mismatch. For both methods we provided simple conditions that guarantee the existence of a unique fixed point and an upper bound on the distance between the original solution and the fixed point of iteration with mismatch.
Furthermore, we established step sizes that guarantee linear convergence of the primal-dual Douglas-Rachford method with mismatched adjoint. Thus, approximating the adjoint with a computationally more efficient algorithm can be done as long as the assumptions are respected. Furthermore, we discussed advantages and disadvantages of our analysis and illustrated our results on an example from computerized tomography, in which the mismatched adjoint was obtained using different discretization schemes for the forward operator and the mismatched adjoint.

\section*{Acknowledgments}
We sincerely want to thank Prof. Dr. D. Lorenz for his enriching comments. They have made an essential contribution to improving the quality of the paper. Furthermore we like to thank Dr. S. Banert for the help on establishing an appropriate convergence rate. E.N. is supported by the MUR Excellence Department Project awarded to the department of mathematics at the University of Genoa, CUP D33C23001110001, and by the US Air Force Office of Scientific Research (FA8655-22-1-7034). E.N. is a member of GNAMPA of the Istituto Nazionale di Alta Matematica (INdAM).

\printbibliography
\end{document}